\documentclass{BICA}

\usepackage{tikz}
\usepackage{float}
\usepackage{xcolor}
\usepackage{aliascnt}
\usetikzlibrary{arrows}
\usetikzlibrary{decorations.pathreplacing}
\usepackage[ruled]{algorithm2e}
\usepackage{multicol}

\usepackage{cleveref}

\newcommand{\rank}{\operatorname{rank}}
\newcommand{\mnull}{\operatorname{null}}
\newcommand{\cycles}{\operatorname{cyc}}
\newcommand{\Forest}{\operatorname{For}}
\newcommand{\Cycles}{\operatorname{Cyc}}

\renewcommand{\qedsymbol}{\rule{1ex}{1ex}}

\title{2-switch: transition and stability on forests and pseudoforests}

\author{%
Victor~N.~Schv\"ollner,
Adri\'an~Pastine
and
Daniel~A.~Jaume
}

\begin{document}

\maketitle


\keywords{2-switch, degree sequence, realization graph, tree, forest, pseudoforest, unicyclic, stability, interval property, graph parameters}
\classification{05C75, 05C07, 05C69, 05C70, 05C05}


\begin{abstract}
Given any two forests (pseudoforests) with the same degree sequence, we show that one can be transformed into the other by a sequence of 2-switches in such a way that all the intermediate graphs of the transformation are forests (pseudoforests). We also prove that the 2-switch operation perturbs minimally some well-known integer parameters in families of graphs with the same degree sequence. Then, we apply these results to conclude that the studied parameters have the interval property in those families.
\end{abstract}


\section{Introduction}

Let $G$ be a graph. We use $V(G)$ and $E(G)$ to refer to the vertex set and the edge set of $G$, respectively. The degree sequence of a graph $G$ with $n$ vertices is denoted by $d=d(G)=(d_1,\ldots,d_n)$, where $d_v$ is the degree of the vertex $v$ in $G$. Let $a,b,c$ and $d$ be four distinct vertices of $G$ such that $ab,cd\in E(G)$ and $ac,bd\notin E(G)$. The process of deleting the edges $ab$ and $cd$ from $G$ and adding $ac$ and $bd$ to $G$ is referred to as a \emph{2-switch} on $G$. This is a classical operation, see \cite{Berge, Chartrand1996}. If $G'$ is the graph obtained from $G$ by a 2-switch, it is straightforward to check that $d(G)=d(G')$. In other words, this operation preserves the degree sequence. A key fact about 2-switch is the following.

\begin{theorem}\label{BergeTeo}
	If $G$ and $H$ are two graphs with the same degree sequence, then there exists a 2-switch sequence transforming $G$ into $H$.
\end{theorem}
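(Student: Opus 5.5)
We argue by induction on the number of vertices $n$, or equivalently by repeatedly fixing the neighbourhood of one vertex. Both graphs live on the same vertex set $\{1,\ldots,n\}$ with $d_v(G)=d_v(H)$ for every $v$. Since 2-switches are reversible (undoing one is again a 2-switch), it suffices to show that $G$ and $H$ can both be brought to a common graph. It is also enough to reach a graph $G'$ from $G$ such that some vertex $v$ has $N_{G'}(v)=N_H(v)$. Once this holds, we delete $v$ from both graphs. The resulting graphs $G'-v$ and $H-v$ have the same degree sequence, because each neighbour of $v$ loses exactly one degree in both. By induction there is a sequence of 2-switches transforming $G'-v$ into $H-v$. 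None of these switches involves $v$, so each is also a valid 2-switch on the full graph: the edges and non-edges it uses lie among vertices other than $v$. Hence the same sequence transforms $G'$ into $H$. The base case $n\le 3$ is trivial, since there the degree sequence determines the graph.

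The key step is to fix the neighbourhood of a chosen vertex $v$, taken for instance of maximum degree. We define the discrepancy $|N_G(v)\,\triangle\,N_H(v)|$ and show that, whenever it is positive, one 2-switch on $G$ lowers it by $2$. Suppose $N_G(v)\ne N_H(v)$. Because $|N_G(v)|=|N_H(v)|$, there exist vertices $x\in N_G(v)\setminus N_H(v)$ and $y\in N_H(v)\setminus N_G(v)$. In $G$ we have $vx\in E(G)$ and $vy\notin E(G)$, and we would like to replace $vx$ by $vy$.

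The main obstacle is finding a partner edge for this replacement. Because $d_G(y)=d_H(y)$ and $y$ is adjacent to $v$ in $H$ but not in $G$, the vertex $y$ has at least one $G$-neighbour outside $N_H(y)$. Moreover, the $G$-neighbours of $y$ cannot all lie in $N_G(x)\cup\{x\}$ in a way that blocks every choice. To make this precise, we compare degrees. Suppose every $G$-neighbour $w$ of $y$ satisfies $w=x$ or $wx\in E(G)$. We then try other choices of $x$ within $N_G(v)\setminus N_H(v)$. Failing that, we use a counting argument with the degrees of $x$ and $y$. Since $d_G(x)\ge\ldots$ does not hold directly, the cleaner route is the standard one. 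If $y$ has a neighbour $w\ne x$ with $wx\notin E(G)$, we perform the 2-switch $vx,yw\to vy,xw$, which is valid because $vy\notin E(G)$ and $xw\notin E(G)$.

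Otherwise every neighbour of $y$ other than $x$ is adjacent to $x$. If also $xy\notin E(G)$, this gives $d_G(x)\ge d_G(y)+1$, counting the additional neighbour $v$. We must still rule out being stuck, and we handle this by choosing $v$ well. Take $v$ of maximum degree and order the remaining vertices so that $v$ is joined in $H$ to the highest-degree ones, in the spirit of Havel--Hakimi. We first transform both $G$ and $H$ so that $v$ is adjacent to a canonical set $S$ of $d_v$ vertices of largest degree. Now the blocking case gives the contradiction: $y\in S$ and $x\notin S$ imply $d(y)\ge d(x)$, whereas the blocking case forces $d(x)>d(y)$. The subcase $xy\in E(G)$ is settled the same way, possibly after choosing $x$ and $y$ to be a pair that is not adjacent. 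Thus in every case a suitable 2-switch exists, each one decreasing the discrepancy by $2$, and the induction closes.
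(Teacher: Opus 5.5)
The paper does not prove this theorem; it quotes it as classical (Petersen, Senior). Your argument therefore has to stand on its own, and in its final form it does. It is the standard Havel--Hakimi ``laying-off'' proof, and it is correct.

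Fix any $v$ and a set $S\subseteq V\setminus\{v\}$ of size $d_v$ such that every vertex of $S$ has degree at least that of every vertex outside $S\cup\{v\}$. Since the degree vectors coincide, the same $S$ serves for $G$ and for $H$. If $N_G(v)\neq S$, pick $x\in N_G(v)\setminus S$ and $y\in S\setminus N_G(v)$. Suppose there were no $w\in N_G(y)$ with $w\neq x$ and $xw\notin E(G)$. Then $N_G(y)\setminus\{x\}\subseteq N_G(x)\setminus\{v,y\}$, and comparing cardinalities gives $d_x\ge d_y+1$ whether or not $xy\in E(G)$. This contradicts $d_y\ge d_x$. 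So ${{v \ x}\choose{y \ w}}$ is a 2-switch that lowers $|N_G(v)\triangle S|$ by $2$.

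Do this to both $G$ and $H$, delete $v$, apply induction, lift the switches, and reverse the $H$-side; that is a complete proof. Structurally it is the same peel-and-lift scheme as the paper's proof of \Cref{IPTT}, with your lifting step playing the role of \Cref{tauiL}. The difference is in how the target is reached. In a forest, acyclicity rules out the blocking configuration for a well-chosen leaf, so \Cref{ponerhojascompartidas} can aim at $F'$ directly. In general graphs you must detour through the canonical neighbourhood $S$ on both sides. That yields connectivity but no bound on the number of switches in terms of $|E(G)-E(H)|$.

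What needs fixing is the exposition. Your sentence asserting that the $G$-neighbours of $y$ ``cannot all lie in $N_G(x)\cup\{x\}$ in a way that blocks every choice'' is false when the target is $N_H(v)$, even for $v$ of maximum degree. Let $E(G)=\{vx,va,vb,xa,xc,ya,yc\}$ and $E(H)=\{vy,va,vb,xy,xa,xc,ac\}$. Both have $d_v=d_x=d_a=3$, $d_y=d_c=2$, $d_b=1$. The only pair is $(x,y)$, and both $G$-neighbours $a,c$ of $y$ are adjacent to $x$.

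So the plan in your first paragraph, reaching $N_{G'}(v)=N_H(v)$ by switches through $v$, fails in general. The canonical set $S$ is essential, whereas taking $v$ of maximum degree is irrelevant. Likewise, in the subcase $xy\in E(G)$ you cannot always choose a non-adjacent pair, since the unique pair may be adjacent. You do not need to: the count above already handles that subcase.

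Delete the abandoned attempts (``We then try other choices of $x$\ldots'', ``Since $d_G(x)\ge\ldots$ does not hold directly\ldots''). Then state the two-sided reduction $G\to G_1$, $H\to H_1$ with $N_{G_1}(v)=N_{H_1}(v)=S$ explicitly before invoking the induction hypothesis.
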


\Cref{BergeTeo} appears throughout the literature; the switch operation and this result trace back to Petersen~\cite{Petersen}, and were subsequently rediscovered several times (e.g. by Senior~\cite{Senior}, who called the operation \emph{transfusion}). A similar result is known for bipartite graphs with a given bipartite degree sequence. For bipartite degree sequences, the fact that any realization can be transformed into any other by swaps is due to Ryser~\cite{Ryser} (see also Gale~\cite{Gale}); it can also be obtained from a bipartite version of the Havel-Hakimi theorem (\cite{Hakimi}, \cite{Havel}; see~\cite{West}).

The \emph{realization graph} of a graphical sequence $d$ (i.e., a sequence of integers $d$ which is the degree sequence of some graph) is the graph ${\cal G}(d)$ whose vertices are the
graphs with degree sequence $d$, with two graphs $G,H$ adjacent if $G$ and $H$ are obtained from
one another by a $2$-switch (see \cite{arikati1999realization}). A direct consequence of \Cref{BergeTeo} is that $\mathcal{G}(d)$ is connected. In recent years, the problem of determining whether a given induced subgraph of $\mathcal{G}(d)$ is connected or not has been extensively studied. In this direction, many advances have already been made. Since \Cref{BergeTeo} assures the existence of a path between $G$ and $H$ in $\mathcal{G}(d)$, a natural question is how short that path can be. The length of the shortest (unconstrained) 2-switch sequence was determined by Erd\H{o}s, Kir\'aly and Mikl\'os~\cite{ErdosKiralyMiklos} via a Gallai-type identity; an equivalent characterization was later obtained in~\cite{BeregIto}. The analogous question under the constraint that every intermediate graph be connected was answered by Taylor~\cite{Taylor} (existence) and by Fernandes~\cite{Fernandes} (shortest length).

\begin{theorem}[\cite{Taylor}]\label{Teo:Tay}
	Let $\mathcal{K}(d)$ be the subgraph of $\mathcal{G}(d)$ induced by connected graphs. Then, $\mathcal{K}(d)$ is connected.

\end{theorem}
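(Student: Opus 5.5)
We want to show that any two connected graphs $G,H$ with the same degree sequence $d$ are joined by a sequence of 2-switches in which every intermediate graph is connected. The plan is to reduce the distance between $G$ and $H$ one step at a time while staying inside $\mathcal{K}(d)$. We measure distance by the size of the symmetric difference $|E(G)\triangle E(H)|$ and induct on it. If it is zero, we are done. Otherwise, we produce a connected $G'$ with $d(G')=d$ that is reachable from $G$ through connected graphs and satisfies $|E(G')\triangle E(H)|<|E(G)\triangle E(H)|$.

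\textbf{Alternating structure.} Colour the edges of $G\setminus H$ red and those of $H\setminus G$ blue. Since $G$ and $H$ have the same degree sequence, every vertex has equal red and blue degree. Hence the symmetric difference decomposes into closed alternating trails, and by a standard shortcutting argument there is a vertex $a$ with a red edge $ab$ and a blue edge $ac$, where $b\neq c$. Because $c$ has red degree at least $1$, there is a red edge $cx$.

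\textbf{Case $x\ne b$ and $bx\notin E(G)$.} The 2-switch $\{ab,cx\}\to\{ac,bx\}$ is legal. It adds the $H$-edge $ac$ and removes the two non-$H$-edges $ab$ and $cx$, so the symmetric difference drops by at least $2$. We must check that connectivity is preserved. Removing $ab$ and $cx$ from a connected graph leaves at most three components. If $G'$ is disconnected, then $ab$ and $cx$ must both be bridges whose removal separates the vertices so that $a,c$ lie together and $b,x$ lie together, away from the rest of the graph. In that situation we instead use the other pairing $\{ab,cx\}\to\{ax,bc\}$, provided $ax,bc\notin E(G)$. In one of the two pairings the new edges cross the cut, so the result is connected. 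Note that the second pairing does not itself add the $H$-edge $ac$. This case needs a careful subsidiary argument: either this pairing still decreases a suitable potential, or we must choose a different red edge at $c$, or a different alternating configuration.

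\textbf{Case $x=b$ or $bx\in E(G)$.} We look for a vertex $y$ and an edge $cy$ such that a two-step switch through a connected intermediate graph achieves the reduction. This is the standard difficulty already present in \Cref{BergeTeo}, and it is handled there by an exchange using a third vertex.

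\textbf{Main obstacle.} I expect the hardest step to be guaranteeing simultaneously that the symmetric difference decreases and that connectivity is kept. For this I would first try to exploit the following fact: a 2-switch on edges $ab,cd$ keeps the graph connected whenever one of the removed edges lies on a cycle, or whenever the two edges lie in a common cycle in the right orientation. Since $G\cup H$ is connected and alternating cycles supply cycles in $G\cup\{ac\}$, adding $ac$ first creates a cycle through $ab$ or through the path from $a$ to $c$ in $G$. Choosing the red edge at $c$ on that cycle should make the removal safe. If that fails, I would fall back on a potential function that counts the common edges lying on a spanning tree of $H$, as in Taylor's original argument.
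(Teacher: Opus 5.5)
The paper gives no proof of this statement; it quotes it from Taylor. Your proposal therefore has to stand on its own, and it does not. The only thing separating this theorem from \Cref{BergeTeo} is making progress toward $H$ while every intermediate graph stays connected, and that is exactly the part you leave open.

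In your first case, suppose ${{a \ b}\choose{c \ x}}$ disconnects $G$. Then $\{ab,cx\}$ is the full set of edges between some $S\ni a,c$ and $V(G)-S\ni b,x$. These need not be two bridges, as a $6$-cycle shows. You are right that the other pairing is then automatically legal and keeps the graph connected. But it removes two red edges and adds $ax,bc$, so if $ax,bc\notin E(H)$ the symmetric difference does not change.

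Worse, your rule can loop. Take $G''=G-\{ab,cx\}+\{ax,bc\}$. There $a$ again has the red edge $ax$ and the blue edge $ac$, $c$ has the red edge $cb$, and $xb\notin E(G'')$. So the rule proposes replacing $ax,cb$ by $ac,xb$, which gives exactly the disconnected graph $G-\{ab,cx\}+\{ac,bx\}$ again. The fallback pairing then returns you to $G$.

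Your second case is handed over wholesale to the unconstrained exchange behind \Cref{BergeTeo}. That exchange is two successive 2-switches, and you never check the intermediate or final graph for connectivity. That check is the entire content of the theorem.

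The tool you propose for the ``main obstacle'' is false as stated: removing an edge that lies on a cycle does not make a 2-switch safe. On the $6$-cycle $a\,b\,p\,d\,c\,q$, the switch ${{a \ b}\choose{c \ d}}$ yields the two disjoint triangles $acq$ and $bpd$. The paper uses precisely this splitting of cycles of length $\ge 6$ in the proof of \Cref{uswitchcaract}. The correct criterion, for connected $G$, is this: ${{a \ b}\choose{c \ d}}$ disconnects $G$ if and only if $\{ab,cd\}$ is the set of all edges between two parts of $V(G)$, one containing $a,c$ and the other containing $b,d$. So choosing the red edge at $c$ ``on that cycle'' guarantees nothing.

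What is missing is a real argument that you can always pick the configuration, or a bounded sequence of switches, that avoids such cuts while some potential strictly decreases. One possible handle: when $\{ab,cx\}$ is such a cut, counting degrees over $S$ (parity) together with connectivity of $H$ forces at least two edges of $E(H)-E(G)$ across it. You would still have to show how to use them. The closing appeal to a potential ``as in Taylor's original argument'' is not such an argument. As written, the proposal is a framework with its key step missing.
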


The article by Taylor includes similar results for multigraphs, and asks whether a result similar to \Cref{Teo:Tay} can be obtained if we change the restricting property.

By $\mathcal{F}(d)$, $\mathcal{U}(d)$ and $\mathcal{P}(d)$, we denote the subgraphs of $\mathcal{G}(d)$ induced, respectively, by forests, unicyclic graphs and pseudoforests (i.e., graphs whose components are trees or unicyclic graphs). Notice that if $V(\mathcal{G}(d))$ contains a tree, then every connected graph in $V(\mathcal{G}(d))$ must be a tree. In such a case, $\mathcal{F}(d)$ is connected by \Cref{Teo:Tay}. Similarly, if $V(\mathcal{G}(d))$ contains a unicyclic graph, then every connected graph in $V(\mathcal{G}(d))$ must be unicyclic, and hence $\mathcal{U}(d)$ is connected by \Cref{Teo:Tay}. The most important feature of a connected induced subgraph $\mathcal{H}$ of $\mathcal{G}(d)$ is that it ensures the possibility of transforming, via 2-switches, a graph $G\in V(\mathcal{H})$ into another graph $G'\in V(\mathcal{H})$ in such a way that every intermediate graph in the transition also belongs to $V(\mathcal{H})$. This article is essentially about the connectedness of $\mathcal{F}(d)$ and $\mathcal{P}(d)$ (first part), and how this topological property is related to the values attained by integer parameters on $V(\mathcal{G}(d))$ (second part).

The first part of our work is organized as follows. In \Cref{sectiontswitch} we characterize those 2-switches that preserve tree or forest structure. Then, in \Cref{sectionFTT}, we show that $\mathcal{F}(d)$ is connected for all $d$, giving an algorithm to compute the transforming 2-switch sequence. Furthermore, in the same section, we establish an upper bound for the distance in $\mathcal{F}(d)$ between two forests, in terms of their sizes. In \Cref{sec-u-switch,sec-p-switch} we characterize those 2-switches that preserve, respectively, unicyclic and pseudoforest structure. Then, in \Cref{sec-pesudoforest}, we prove that $\mathcal{P}(d)$ is connected. Finally, in \Cref{sec_bipartite_nonbip}, we show that the subgraph of $\mathcal{G}(d)$ induced by bipartite (non-bipartite) graphs is not connected in general.

One of the most studied problems in the literature is, given a graph parameter (clique number, domination number, matching number, etc.), finding the minimum and maximum values for the parameter on $V(\mathcal{G}(d))$ (see \cite{BockRat,GHR1,GHR2,wang2008extremal,zhang2008laplacian,zhang2013number}). Another interesting problem is deciding which values between the minimum and the maximum can be realized by a graph in $V(\mathcal{G}(d))$ (see \cite{kurnosov2020set,Rao}). For instance, in \cite{BockRat} the authors study the matching number of trees with a given degree sequence, and of bipartite graphs with a given bipartite degree sequence. The authors find minimum and maximum values for the matching number in these families, and then show that all the other intermediate values are also attained. To establish the same property for bipartite degree sequences, they show that a 2-switch alters the matching number by at most $1$, and use the version of \Cref{BergeTeo} for bipartite degree sequences. In the case of trees, the authors of \cite{BockRat} constructed the tree realizing each value between the minimum and the maximum, although they could have applied \Cref{Teo:Tay}.

In the second part of this article, consisting only of  \Cref{sectionIntervalproperty}, we apply our results from the first part to show that a plethora of integer parameters are stable under 2-switch, i.e., the 2-switch perturbs them by at most 1, and have the interval property, which means that they attain every intermediate value between maximum and minimum. \Cref{sectionIntervalproperty} is organized as follows. In \Cref{sectionmatchingnumber,sectionindependencenumber} we study, respectively, matching number and independence number, obtaining results also on related parameters like edge-covering number, rank, nullity, vertex-covering number and clique number. Next, we study the domination number in \Cref{sectiondominationnumber} and the number of connected components in \Cref{sectionnumberofcomponents}. \Cref{sectionpathcover} is about path-covering number, zero forcing number and Z-Grundy domination number. Finally, in \Cref{sectionchromatic}, we analyze the chromatic number. Throughout this paper the sequence $d$ will be a graphical sequence.

\section{t-switch and f-switch}\label{sectiontswitch}

We denote a 2-switch operation by the $2\times 2$ matrix ${{a \ b}\choose{c \ d}}$, where the rows represent the edges to be removed and the columns represent the edges to be added. If $\tau= {{a \ b}\choose{c \ d}}$ is a 2-switch on $G$, then $\tau(G)$ denotes the transformed graph
\[ (G-\{ab,cd\})+\{ac,bd\}. \]

In order to prove that $\mathcal{F}(d)$ is connected, we need to characterize those 2-switches on a forest that preserve the forest structure. A first step in doing so is characterizing those preserving the tree structure. A 2-switch $\tau$ on a tree $T$ is said to be a \emph{t-switch} if $\tau(T)$ is a tree. It is easy to check that a 2-switch $\tau={{a \ b}\choose{c \ d}}$ on $T$ is a t-switch if and only if $T$ contains the path $ab\ldots cd$ or $ba\ldots dc$. Note that any two forests with the same degree sequence have the same number of connected components. In particular, if $d$ is the degree sequence of a tree, then every member of $V(\mathcal{F}(d))$ is also a tree.

A 2-switch $\tau$ on a forest $F$ is said to be an \emph{f-switch} if $\tau(F)$ is a forest. Using t-switch, we can easily characterize when a 2-switch on a forest is an f-switch. In fact, if $\tau$ is a 2-switch between two (disjoint) edges $e_{1}$ and $e_{2}$ of a forest $F$, we have the following:
\begin{enumerate}
	\item if $e_{1}$ and $e_{2}$ are in the same component $T$ of $F$, then $\tau$ is an f-switch on $F$ if and only if it is a t-switch on $T$;

	\item if $e_{1}$ and $e_{2}$ are in different components of $F$, then $\tau$ is an f-switch on $F$.
\end{enumerate}

\section{Forests}\label{sectionFTT}

Let \(G, H \in V(\mathcal{G}(d))\). Obviously, $G$ and $H$ have the same set of leaves. A leaf $\ell$ is said to be \emph{trimmable} in $G$ and $H$ if $\ell v\in E(G)\cap E(H)$, for some $v$. We denote the set of trimmable leaves of \(G\) and \(H\) by \(\Lambda(G,H)\), or just \(\Lambda\) when \(G\) and \(H\) are clear from the context. Recall that in this section we show that $\mathcal{F}(d)$ is connected. The next two lemmas are preliminary steps to this result, which will use the simple idea of consecutive deletions of trimmable leaves.

\begin{lemma}
	\label{tauiL}
	Let $F,F'\in V(\mathcal{F}(d))$. Suppose that $\theta=(\tau_{i})_{i=1}^{r}$ is an f-switch sequence transforming $F-\Lambda$ into $F'-\Lambda$. Then, $\theta$ is an f-switch sequence transforming $F$ into $F'$.
\end{lemma}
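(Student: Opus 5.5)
We argue by induction along the sequence. Let $F_0=F-\Lambda$ and $F_i=\tau_i(F_{i-1})$, so that $F_r=F'-\Lambda$, and let $\widehat F_i=F_i+\{\ell v_\ell:\ell\in\Lambda\}$, where $v_\ell$ is the common neighbour of $\ell$ in $F$ and $F'$. Thus $\widehat F_0=F$ and $\widehat F_r=F'$. The claim is that for each $i$, $\tau_i$ is an f-switch on $\widehat F_{i-1}$ and $\tau_i(\widehat F_{i-1})=\widehat F_i$. Then $\theta$ transforms $F$ into $F'$ through forests.

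Before the induction, two observations.

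\begin{enumerate}
	\item \emph{Every $v_\ell$ survives in $F-\Lambda$.} A vertex $v_\ell$ could itself be a trimmable leaf only if $\ell v_\ell$ formed an isolated $K_2$ component in both $F$ and $F'$. We handle this by convention: either such pairs are not removed, or both endpoints are removed and re-added as an isolated edge, which no switch ever touches. In what follows we may assume $v_\ell\in V(F-\Lambda)$.
	\item \emph{No $\tau_i$ involves a vertex of $\Lambda$.} Each $\tau_i$ acts on $F_{i-1}$, whose vertex set is $V(F)\setminus\Lambda$, so its four vertices avoid $\Lambda$.
\end{enumerate}

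Now fix $i$ and write $\tau_i=\binom{a\ b}{c\ d}$. By the inductive hypothesis $\widehat F_{i-1}$ contains $F_{i-1}$ together with the pendant edges $\ell v_\ell$.

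\begin{enumerate}
	\item \emph{$\tau_i$ is a 2-switch on $\widehat F_{i-1}$.} The edges $ab,cd$ lie in $F_{i-1}\subseteq\widehat F_{i-1}$. The non-edges $ac,bd$ of $F_{i-1}$ stay non-edges in $\widehat F_{i-1}$, because the only added edges meet $\Lambda$ and $a,b,c,d\notin\Lambda$.
	\item \emph{$\tau_i$ commutes with adding the leaves.} The switch touches only edges inside $V(F)\setminus\Lambda$, so $\tau_i(\widehat F_{i-1})=\tau_i(F_{i-1})+\{\ell v_\ell\}=\widehat F_i$.
	\item \emph{$\widehat F_i$ is a forest.} $F_i$ is a forest by hypothesis, and each $\ell\in\Lambda$ is attached by a single edge to a vertex of $F_i$ (or, in the $K_2$ case, forms a separate component). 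Adding pendant vertices creates no cycle, so $\widehat F_i$ is a forest. Hence $\tau_i$ is an f-switch.
\end{enumerate}

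The bookkeeping is routine. The only delicate point is the degenerate case where $\ell$ and $v_\ell$ are both trimmable, that is, an isolated edge common to $F$ and $F'$. It must be treated consistently with how $F-\Lambda$ is defined, so that the re-attachment of leaves is well defined. I expect this convention check to be the main, minor, obstacle.
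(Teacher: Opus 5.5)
Your proof is correct and uses the same idea as the paper: no $\tau_i$ touches a vertex of $\Lambda$, so the switches commute with deleting or re-attaching the trimmable leaves, and pendant vertices never lie on a cycle, so being a forest transfers in both directions. You re-attach leaves to the small forests instead of deleting them from the large ones, and you also check explicitly that $ac,bd$ remain non-edges and handle the isolated-$K_2$ case, both of which the paper leaves implicit; in that case the literal vertex deletion $F-\Lambda$ already removes both ends, so your second convention is the one that matches the statement.
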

\begin{proof}
Let $F_0=F$ and $F_i=\tau_i(F_{i-1})$. As none of the vertices in $\Lambda$ is involved in any of the f-switches, $F_i-\Lambda=\tau_i(F_{i-1}-\Lambda)$ for all $i$. Hence $\theta$ is a sequence of 2-switches transforming $F$ into $F'$. Since $F_i-\Lambda$ is obtained from $F_i$ by removing vertices of degree $1$, $F_i-\Lambda$ has the same
cycles as $F_i$. Thus, as every $F_i-\Lambda$ is a forest, every $F_i$ is a forest as well. Therefore, $\theta$ is a sequence of f-switches transforming $F$ into $F'$.
\end{proof}

\begin{lemma}\label{ponerhojascompartidas}
	Let $F,F'\in V(\mathcal{F}(d))$. If \(\Lambda(F,F')=\varnothing\), then there exists an f-switch \(\tau\) on \(F\) such that  \(\Lambda(\tau(F),F')\neq \varnothing\).
\end{lemma}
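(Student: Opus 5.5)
The plan is to fix one leaf $\ell$ and, by a single f-switch, replace its $F$-edge by its $F'$-edge. Then $\ell$ becomes trimmable in $\tau(F)$ and $F'$. I assume $d$ has at least one nonzero entry. Otherwise there are no edges, no 2-switch exists, and the statement is vacuous, so this degenerate case must be excluded.

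Take a leaf $\ell$ with $F$-neighbour $u$ and $F'$-neighbour $v$; since $\Lambda(F,F')=\varnothing$, we have $v\neq u$. Let $w$ be an $F$-neighbour of $v$ satisfying $w\neq u$, $w\neq \ell$ and $uw\notin E(F)$. We use the 2-switch $\tau={{\ell \ u}\choose{v \ w}}$, which deletes $\ell u, vw$ and adds $\ell v, uw$. It is a legal 2-switch because $\ell v\notin E(F)$, as $u$ is the only neighbour of $\ell$ in $F$. After the switch, $\ell v\in E(\tau(F))\cap E(F')$, so $\ell\in\Lambda(\tau(F),F')$. It remains to choose $\ell$ and $w$ so that $\tau$ is an f-switch, using the characterization in \Cref{sectiontswitch}.

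\emph{Case A: $v$ lies in a different component of $F$ from $\ell$.} Here $v$ has degree at least $1$ in $F$, because it does in $F'$. Take $w$ to be any $F$-neighbour of $v$. Then $w$ is in a different component from $u$, so $w\neq u$, $w\neq\ell$ and $uw\notin E(F)$. Since the two edges lie in different components, item 2 of \Cref{sectiontswitch} shows that $\tau$ is an f-switch.

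\emph{Case B: $v$ lies in the component $T$ of $\ell$.} The $T$-path from $\ell$ to $v$ has the form $\ell u\ldots v$, of length at least $2$. If $v$ has an $F$-neighbour $w$ off this path, then $T$ contains the path $\ell u\ldots v w$, so $\tau$ is a t-switch on $T$, hence an f-switch by item 1. Moreover, $d_T(\ell,w)\ge 3$, which forces $w\neq u$, $w\neq\ell$ and $uw\notin E(T)$. Such a $w$ exists exactly when $d_v\ge 2$.

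The main obstacle is therefore Case B with $d_v=1$, and I avoid it by the choice of $\ell$. In that situation $\ell$ and $v$ are both leaves of $F'$ and adjacent there, so $\ell v$ is a $K_2$-component of $F'$.

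\emph{First choice of $\ell$.} Suppose $F'$ has a component with at least $3$ vertices. Every tree on at least $3$ vertices has a leaf whose neighbour is not a leaf, for instance an endpoint of a longest path. Choose $\ell$ to be such a leaf of $F'$. Then $d_v\ge2$, and either Case A or Case B applies.

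\emph{Second choice of $\ell$.} Otherwise every component of $F'$ is $K_1$ or $K_2$, so all degrees are at most $1$. The same holds in $F$, which is therefore also a matching. Now any leaf $\ell$ has $v$ outside its $F$-component $\{\ell,u\}$, and Case A applies.

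In both situations we obtain the required f-switch.
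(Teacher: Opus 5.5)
Your proof is correct and is essentially the paper's argument. You use the same switch ${{\ell \ u}\choose{v \ w}}$ (the paper names the $F$- and $F'$-neighbours the other way round) and the same split: a leaf whose $F'$-neighbour has degree at least $2$, handled according to whether that neighbour lies in $\ell$'s component of $F$, versus the case where all degrees are $1$. Your checks that the switch is legal and that such a leaf exists (via a longest path) supply details the paper leaves implicit.

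One small wording point: in the edgeless case the statement is not vacuous but false, since $\Lambda=\varnothing$ holds and no 2-switch exists. That is exactly why excluding it, as you do, is necessary.
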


\begin{proof}
We split the proof in two cases: 1) there is a leaf whose neighbor in $F'$  has degree $\geq 2$; 2) every vertex has degree 1.

\begin{enumerate}[label=(\arabic*).]
	\item Let $\ell$ be a leaf such that its neighbor $u$ in $F'$ has degree at least $2$,  and let $v$ be the neighbor of $\ell$ in $F$. If $\ell$ and $u$ are in different components of $F$, let $w$ be a neighbor of $u$ in $F$. If $\ell$ and $u$ are in the same component of $F$, consider the path $\ell v \ldots u$ in $F$. As $d_u \geq 2$, there is a neighbor $w$ of $u$ that is not in  $\ell v \ldots u$.

	In either case, $\tau={{\ell \ v}\choose{u \ w}}$ is an f-switch on $F$ such that $\ell\in \Lambda(\tau(F),F')$.

	\item Let $\ell$ be any leaf, let $v$ and $u$ be the neighbors of $\ell$ in $F$ and $F'$ respectively,
	and let $w$ be the neighbor of $u$ in $F$.
	Then, $\tau={{\ell \ v}\choose{u \ w}}$ is an f-switch on $F$ such that $\ell\in \Lambda(\tau(F),F')$. \qedhere
\end{enumerate}

\end{proof}

The next result states that given two forests with the same degree sequence, there is a sequence of f-switches transforming one into the other.
Before proceeding with the proof, we need to note two things. First, it is sufficient to prove it for forests without isolated vertices, because they do not participate in any 2-switch. Second, it is easy to check that the result holds for forests of order $n\leq 4$. We are now ready to proceed.

\begin{theorem}
	\label{IPTT}
	$\mathcal{F}(d)$ is connected.

\end{theorem}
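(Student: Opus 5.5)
We argue by strong induction on the number $n$ of vertices, assuming (as noted before the statement) that forests in $V(\mathcal{F}(d))$ have no isolated vertices and that the claim holds for $n\le 4$. Fix $F,F'\in V(\mathcal{F}(d))$ with $n\ge 5$. We show that $F$ and $F'$ are joined by a path in $\mathcal{F}(d)$, that is, by a sequence of f-switches.

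\textbf{Step 1: create a trimmable leaf.} If $\Lambda(F,F')=\varnothing$, we apply \Cref{ponerhojascompartidas} to obtain an f-switch $\tau_0$ with $\Lambda(\tau_0(F),F')\neq\varnothing$. Since $\tau_0(F)$ is a forest with the same degree sequence, it suffices to connect $\tau_0(F)$ to $F'$. So we may assume $\Lambda=\Lambda(F,F')\neq\varnothing$.

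\textbf{Step 2: trim and apply induction.} Let $F-\Lambda$ and $F'-\Lambda$ be the forests obtained by deleting the trimmable leaves. For each $\ell\in\Lambda$ with common neighbor $v$, deleting $\ell$ lowers the degree of $v$ by one in both graphs. Hence $F-\Lambda$ and $F'-\Lambda$ have the same vertex set and the same degree sequence $d'$, and both are forests with fewer than $n$ vertices. Some vertices of $F-\Lambda$ may now be isolated; they are isolated in both graphs and play no role in 2-switches, so they can be discarded and the inductive hypothesis applied to what remains. (If $F-\Lambda$ has no edges at all, then $F$ and $F'$ coincide on every edge, since every edge of $F$ then has a trimmable endpoint, and there is nothing to prove.) By induction, $\mathcal{F}(d')$ is connected, so there is an f-switch sequence $\theta$ transforming $F-\Lambda$ into $F'-\Lambda$.

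\textbf{Step 3: lift back.} By \Cref{tauiL}, $\theta$ is also an f-switch sequence transforming $F$ into $F'$. Prepending $\tau_0$ if Step 1 was needed gives the required path in $\mathcal{F}(d)$.

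\textbf{Main obstacle.} The delicate point is the reduction in Step 2: making sure the induction applies to the trimmed forests. Two things need care.
\begin{enumerate}
\item Trimming may produce new isolated vertices, or even two-vertex components.
\item Deleting $\Lambda$ in one pass, rather than leaf by leaf, must be legitimate. A trimmable leaf $\ell$ may have as common neighbor $v$ another leaf, so that $\ell v$ is an isolated edge in both graphs. Both endpoints then lie in $\Lambda$, and removing both is harmless: the edge is identical in $F$ and $F'$.
\end{enumerate}
\Cref{tauiL} is stated exactly for removing $\Lambda$, so these issues only concern verifying that $F-\Lambda$ and $F'-\Lambda$ share a degree sequence, which the argument in Step 2 handles. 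This also yields an algorithm: alternate between creating a trimmable leaf with \Cref{ponerhojascompartidas} and trimming.
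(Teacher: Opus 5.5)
Your proof is correct and follows the same route as the paper: induction on the order, with \Cref{ponerhojascompartidas} used to create a trimmable leaf when $\Lambda(F,F')=\varnothing$, then deletion of $\Lambda$, the inductive hypothesis applied to $F-\Lambda$ and $F'-\Lambda$, and the resulting sequence lifted back with \Cref{tauiL}. Your extra remarks make explicit details the paper leaves implicit: that the vertex-labeled degree sequences still agree after trimming, that newly isolated vertices can be discarded, and how to handle the edgeless case.
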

\begin{proof}
Let $F,F'\in V(\mathcal{F}(d))$ and suppose that $F$ and $F'$ have no isolated vertices. We use induction on $n=|V(F)|$. If $n\leq 4$, the statement is true. Hence, let $n>4$, and suppose that every pair of forests of order $<n$ with the same degree sequence can be transformed into each other by a sequence of f-switches. We have two cases: 1) $\Lambda(F,F')\neq \varnothing$; 2) $\Lambda(F,F')= \varnothing$.
\begin{enumerate}[label=(\arabic*).]
	\item Consider $F-\Lambda$ and $F'-\Lambda$. These are two forests of order $n'<n$, with the same  degree sequence $d'$. So, the inductive hypothesis applies to $F-\Lambda$ and $F'-\Lambda$: there exists an f-switch sequence $\theta$ transforming $F-\Lambda$ into $F'-\Lambda$. Hence, by \Cref{tauiL}, $\theta$ is an f-switch sequence from $F$ to $F'$.

	\item By \Cref{ponerhojascompartidas}, this case can be reduced to case (1). \qedhere
\end{enumerate}
\end{proof}

The proof of \Cref{IPTT} contains a procedure that returns a transforming f-switch sequence. We make this algorithm explicit, see \eqref{algtrans} (Transition Algorithm).
\begin{algorithm}[h]
	\label{algtrans}
	{Transition Algorithm}\\
	INPUT: two forests $F,F' \in \mathcal{F}(d)$.
	 \begin{enumerate}

		\item Let $r=0$ and $\Lambda=\Lambda(F,F')$.
		\item While \(F\neq F'\):
			\begin{enumerate}
			\item If $\Lambda=\varnothing$:
				\begin{enumerate}
					\item Let \(r=r+1\).
					\item If every vertex in $F'$ has degree $1$ choose a leaf \(\ell \in V(F')\). Else choose a leaf
					$\ell \in V(F')$ such that its neighbor \(u\) in \(F'\) has degree at least $2$.
					\item Find the f-switch \(\tau\) such that \(\ell\) is trimmable between \(\tau(F)\) and \(F'\) (such f-switch exists by \Cref{ponerhojascompartidas}).
					\item Let \(\tau_{r}=\tau\), \(F=\tau(F)\), and $\Lambda=\Lambda(F,F')$.
				\end{enumerate}
			\item Let \(F=F-\Lambda\) and \(F'=F'-\Lambda\).
			\end{enumerate}
		\item RETURN \((\tau_{i})_{i=1}^{r}\).
	\end{enumerate}
\end{algorithm}
By the Transition Algorithm \eqref{algtrans}, either \(F=F'\) or at least one leaf is removed (equivalently, one edge is removed). Thus, the Transition Algorithm runs at most \(|E(F')-E(F)|\) times. This number can be improved by $1$. First, we need a technical lemma.

\begin{lemma}
	\label{etaneq1}
	If \(G,H \in V(\mathcal{G}(d))\), then \(| E(G) - E(H)|\neq 1\).
\end{lemma}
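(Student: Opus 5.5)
Assume for contradiction that $|E(G)-E(H)|=1$. The plan is a short counting argument on degrees, using only that $G$ and $H$ have the same degree sequence $d$.

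First, observe that $|E(G)|=|E(H)|=\frac12\sum_v d_v$. Hence $|E(G)-E(H)|=|E(H)-E(G)|$, since both equal $|E(G)|-|E(G)\cap E(H)|$. So the assumption yields a unique edge $e=xy\in E(G)-E(H)$ and a unique edge $f=zw\in E(H)-E(G)$. Note $e\neq f$, because $e\notin E(H)$ while $f\in E(H)$.

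Next, write $K$ for the common part, with edge set $E(G)\cap E(H)$ on the same vertex set. Then $G=K+e$ and $H=K+f$. For every vertex $v$ we get
\[
d_v=\deg_K(v)+[v\in e]=\deg_K(v)+[v\in f],
\]
so $[v\in e]=[v\in f]$ for all $v$. This says that $e$ and $f$ have the same endpoint set, i.e.\ $\{x,y\}=\{z,w\}$. Since graphs here are simple, this forces $e=f$, which contradicts $e\neq f$. Therefore $|E(G)-E(H)|\neq 1$.

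There is no real obstacle. The only point needing care is the first step: noting that equal degree sums give $|E(G)-E(H)|=|E(H)-E(G)|$, so a single missing edge must be balanced by exactly one added edge. The per-vertex degree comparison then finishes the argument.
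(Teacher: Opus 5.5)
Your proof is correct and follows the paper's argument. Equal edge counts force $|E(H)-E(G)|=1$, and comparing degrees at the endpoints of the unique edge of $E(G)-E(H)$ shows that the unique edge of $E(H)-E(G)$ has the same endpoints, which is a contradiction. Your indicator bookkeeping via the common subgraph $K$ simply makes the paper's per-vertex degree comparison explicit.
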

\begin{proof}
We proceed by contradiction. Assume that \(| E(G) - E(H)|= 1\). Clearly, $d(G)=d(H)$ implies $|E(G)|=|E(H)|$. Thus, $|E(H) - E(G)|= 1$. Let $ab\in E(G)-E(H)$. As $d(G)=d(H)$, there must be an edge incident to $a$ and an edge incident to $b$ in $E(H)-E(G)$. But, as $|E(H) - E(G)|= 1$, this can only be possible if $ab\in E(H)$, contradicting the fact that $ab$ is the only edge in $E(G)-E(H)$.
\end{proof}

Suppose now that $(\tau_{i})_{i=1}^{r}$ is an f-switch sequence obtained as output of the Transition Algorithm \eqref{algtrans} applied to \(F\) and \(F'\). Let $F_0=F$ and $F_i=\tau_i(F_{i-1})$ for $1\leq i \leq r$.
Notice that for each $i$,  $|E(F')\cap E(F_i)|\geq |E(F')\cap E(F_{i-1})|+1$. Thus,
\[
\sum_{i=1}^{r-1}\left( |E(F')\cap E(F_i)|-|E(F')\cap E(F_{i-1})| \right) \geq r-1.
\]
Since this last sum is telescoping we have
\[
|E(F')\cap E(F_{r-1})|-|E(F')\cap E(F)|\geq r-1.
\]
On the other hand, \Cref{etaneq1} implies
\begin{align*}
|E(F')\cap E(F_{r})|-|E(F')\cap E(F_{r-1})|=|E(F')|-|E(F')\cap E(F_{r-1})|\geq 2.
\end{align*}
Thus, 
\begin{align*}
|E(F')\cap E(F_{r})|-|E(F')\cap E(F)|\geq r+1.
\end{align*}
But $E(F_{r})=E(F')$, and $|E(F')|-|E(F')\cap E(F)|=|E(F')-E(F)|$. Therefore, $r\leq |E(F')-E(F)|-1$.
Hence, the Transition Algorithm runs at most $|E(F')-E(F)|-1$ times. We have just established the other main result of this section.
\begin{theorem}
	\label{transbosques}
	The distance between $F$ and $F'$ in $\mathcal{F}(d)$ is at most
	\[ |E(F')-E(F)|-1. \]
\end{theorem}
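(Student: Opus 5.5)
The bound is read off from the Transition Algorithm \eqref{algtrans}, using the counting argument that precedes the statement. The case $F=F'$ needs a separate remark, since then the bound would read $-1$. I read the statement for $F\neq F'$; for $F=F'$ the distance is $0$.

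First, the algorithm is correct. By \Cref{ponerhojascompartidas} and \Cref{tauiL}, each f-switch it selects (computed on the trimmed forests) is also an f-switch on the original forest $F_{i-1}$, and it yields a forest $F_i$. Every trimmable leaf of the current pair $(F_{i-1}-\Lambda,\,F'-\Lambda)$ gives an edge that is common to the untrimmed graphs $F_i$ and $F'$. Since the trimming only deletes edges shared by both graphs, the edges already in $E(F')\cap E(F_{i-1})$ are not touched by $\tau_i$, because $\tau_i$ acts only on untrimmed vertices. The switch also creates at least one new common edge $\ell u$. Hence
\[
|E(F')\cap E(F_i)|\geq |E(F')\cap E(F_{i-1})|+1 .
\]
One point to check here: the removed edge $\ell v$ was not common, because $\ell$ was not trimmable.

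Second, look at the last switch. Since $F_r=F'$, the graph $F_{r-1}$ differs from $F'$. By \Cref{etaneq1}, $|E(F')-E(F_{r-1})|\geq 2$, so the final step gains at least $2$ common edges. Telescoping the increments then gives
\[
|E(F')|-|E(F')\cap E(F)|\geq (r-1)+2 = r+1,
\]
that is, $r\leq |E(F')-E(F)|-1$. The output $(\tau_i)$ is a path of length $r$ in $\mathcal{F}(d)$ from $F$ to $F'$, so the distance is at most $r$, which proves the claim.

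The main obstacle is the first step: justifying rigorously that the common-edge count increases monotonically through the recursive trimming. The concern is that an f-switch on a trimmed forest could destroy a common edge. I would handle it by noting that the switch's four vertices all lie in the trimmed graph. Its two removed edges are edges of the current $F_{i-1}-\Lambda$. The edge $\ell v$ is not in $F'$ because $\Lambda$ was empty at that stage. For the edge $uw$, if it happened to be common, the gain from $\ell u$ could be offset. I would argue that $\ell u$ and $uw$ cannot both be edges of $F'$ when $d_u\geq2$ unless they are distinct edges. In that situation I would choose $w$ so that $uw\notin E(F')$; if that fails, I would fall back on the crude bound of a net gain of at least one, which is all that is needed.
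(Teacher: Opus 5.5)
\textbf{The step you flag as the main obstacle is exactly where the argument breaks, and it cannot be repaired: the statement is false.} Your route is the paper's, namely the counting discussion just before the theorem. Both rest on the inequality $|E(F')\cap E(F_i)|\ge |E(F')\cap E(F_{i-1})|+1$, which the paper asserts without proof. For the switch $\tau={{\ell\ v}\choose{u\ w}}$ produced by \Cref{ponerhojascompartidas}, the change in the number of common edges is $1+[vw\in E(F')]-[uw\in E(F')]$, where $[P]=1$ if $P$ holds and $0$ otherwise.

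Your claim that $\tau_i$ never touches common edges is wrong. Trimming removes only common \emph{leaf} edges, so $uw$ can be a common edge between untrimmed vertices. Nor can you always choose $w$ with $uw\notin E(F')$. The f-switch condition forbids the predecessor $p$ of $u$ on the $F$-path from $\ell$ to $u$, and $p$ can be the only $F$-neighbour of $u$ that is not an $F'$-neighbour. Your remark that $\ell u$ and $uw$ are distinct is vacuous. So the fallback ``net gain at least one'' is precisely the unproved step, and it fails.

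Here is a concrete counterexample. Let $F$ be the path $0,1,2,3,4,5,6,7$ and $F'$ the path $0,4,5,6,1,2,3,7$. Then $d(F)=d(F')$ and $E(F')-E(F)=\{\{0,4\},\{1,6\},\{3,7\}\}$, so the claimed bound is $2$. The common edges are $\{1,2\},\{2,3\},\{4,5\},\{5,6\}$.

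A 2-switch on a path removes $\{i,i+1\}$ and $\{j,j+1\}$ with $j\ge i+2$. It is an f-switch only if it adds $\{i,j\}$ and $\{i+1,j+1\}$. The other reconnection either closes a cycle through $i+1,\dots,j$ or, when $j=i+2$, is not a 2-switch at all. Among these switches, the only ones adding an edge of $E(F')-E(F)$ are $(i,j)=(0,4),(0,5),(1,6),(2,6)$. They remove the common edges $\{4,5\},\{5,6\},\{1,2\},\{2,3\}$ respectively. Hence every f-switch on $F$ has net gain at most $0$. The switches your algorithm produces from the leaves $0$ and $7$ are exactly $(0,4)$ and $(2,6)$, each with gain $0$.

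Therefore every neighbour of $F$ in $\mathcal{F}(d)$ still misses at least three edges of $F'$. At least two further switches are then needed, so $\mathrm{dist}_{\mathcal{F}(d)}(F,F')\ge 3>|E(F')-E(F)|-1=2$. The unconstrained distance is $2$: apply ${{0\ 1}\choose{4\ 3}}$, which creates the triangle on $1,2,3$, then ${{1\ 3}\choose{6\ 7}}$. It is the forest constraint that costs the extra step. No choice of leaf or of $w$ rescues the telescoping argument.

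Your separate observation about $F=F'$ is also correct: as written, the bound reads $-1$ there.
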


\begin{proof}
	It follows from \Cref{IPTT} and the previous discussion.
\end{proof}

It is worth noting that \Cref{transbosques} bounds the distance in $\mathcal{F}(d)$, where every intermediate graph must be a forest. The (unconstrained) 2-switch distance in $\mathcal{G}(d)$ was determined by Erd\H{o}s, Kir\'aly and Mikl\'os~\cite{ErdosKiralyMiklos} to be $|E(F')-E(F)|-\max C$ (where $\max C$ denotes the maximum number of $F$/$F'$-alternating circuits in $E(F)\triangle E(F')$), with $\max C\ge 1$, and can therefore be smaller than our bound. However, their shorter sequence need not consist of f-switches. Moreover, computing the exact 2-switch distance is NP-hard~\cite{BeregIto}, whereas \Cref{transbosques} provides an explicit, easily computable bound.

\section{u-switch} \label{sec-u-switch}

Let $G$ be a graph. By \(\Cycles(G)\) we denote the subgraph of $G$ induced by all the vertices that belong to some cycle of \(G\). Now, define
\[ \Forest(G) = G - E(\Cycles(G)), \]
which is a spanning forest of $G$ whose components are trees attached to vertices of \(\Cycles(G)\). Clearly, $E(G)=E(\Cycles(G))\dot{\cup} E(\Forest(G))$. A 2-switch $\tau$ on a unicyclic graph $U$ is said to be a \emph{u-switch} if $\tau(U)$ is also unicyclic.

\begin{lemma}
    \label{lem.tswitch.U-e}
    Let $U$ be a unicyclic graph and let $e\in E(\Cycles(U))$. If $\tau$ is a t-switch on $U-e$ between the edges $ab,cd$ of $\Forest(U)$, then $e\notin\tau(U-e)$.
\end{lemma}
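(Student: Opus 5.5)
The plan is a direct argument by contradiction, using only the structure of the unique path between two vertices in the tree $U-e$ and the characterization of t-switches from \Cref{sectiontswitch}. Since $U$ is unicyclic and $e$ lies on its unique cycle $C$, the graph $U-e$ is a tree. Write $e=xy$. The unique $x$--$y$ path in $U-e$ is exactly $C-e$, so every edge of that path belongs to $E(\Cycles(U))$.

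Let $\tau={{a \ b}\choose{c \ d}}$. The edges added by $\tau$ are $ac$ and $bd$. Every edge of $U-e$ other than $ab,cd$ survives in $\tau(U-e)$, and none of these is $e$. So it suffices to show that $ac\neq e$ and $bd\neq e$. By the characterization of t-switches, $U-e$ contains either the path $ab\ldots cd$ or the path $ba\ldots dc$. I treat these two alternatives in turn.

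Suppose first that $ac=e$. Then the unique $a$--$c$ path in $U-e$ is $C-e$, and all its edges are cycle edges.
\begin{itemize}
\item If $U-e$ contains $ab\ldots cd$, its subpath from $a$ to $c$ is the $a$--$c$ path in the tree and begins with the edge $ab$. Hence $ab\in E(\Cycles(U))$, contradicting $ab\in E(\Forest(U))$.
\item If $U-e$ contains $ba\ldots dc$, its $a$--$c$ subpath ends with the edge $dc$. Hence $cd$ is a cycle edge, again a contradiction.
\end{itemize}

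Suppose next that $bd=e$. The argument is symmetric, with the $b$--$d$ subpath in place of the $a$--$c$ subpath.
\begin{itemize}
\item In the path $ab\ldots cd$, the $b$--$d$ subpath ends with $cd$, which would then be a cycle edge.
\item In the path $ba\ldots dc$, the $b$--$d$ subpath begins with $ba$, which would then be a cycle edge.
\end{itemize}
Either way we contradict the hypothesis that $ab,cd\in E(\Forest(U))$.

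The only step requiring care is identifying the unique tree path between the endpoints of $e$ with $C-e$, together with correctly extracting the relevant subpath in each of the two t-switch configurations. Everything else is immediate, so I do not expect a genuine obstacle.
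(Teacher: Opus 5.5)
Your proof is correct, but it takes a different route from the paper's.

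The paper argues from acyclicity alone. Since $ab,cd\in E(\Forest(U))$, no edge of $C-e$ (where $C=\Cycles(U)$) is touched by $\tau$, so $C-e$ is a subgraph of $\tau(U-e)$. If $e$ were also in $\tau(U-e)$, the whole cycle $C$ would sit inside $\tau(U-e)$, which is impossible because a t-switch produces a tree.

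You instead reduce to showing $e\notin\{ac,bd\}$. You then use the path characterization of t-switches ($ab\ldots cd$ or $ba\ldots dc$) and check four cases. In each case the tree path joining the endpoints of the added edge contains $ab$ or $cd$, which would force a forest edge onto $C-e$.

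The two proofs reach the same contradiction from opposite ends. The paper's version is shorter: it needs only the definition of a t-switch (the image is acyclic) and no case analysis. Yours relies on the path characterization, which the paper only asserts as easy to check. In exchange, yours makes explicit which removed edge would be forced onto the cycle in each configuration.
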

\begin{proof}
	Let $C=\Cycles(U)$ be the cycle of $U$.	Notice that $C-e$ is a subgraph of both $U-e$ and $\tau(U-e)$, because none of the edges in $C-e$ are involved in $\tau$. If $e\in \tau(U-e)$, then $C\subseteq\tau(U-e)$ (the symbol $\subseteq$ denotes the subgraph relation). Hence, $\tau$ is not a t-switch on $U-e$.
\end{proof}

\begin{lemma}
    \label{lem2.tswitch.U-e}
    Let $U$ be a unicyclic graph and let $ab\in E(\Cycles(U))$, $cd\in E(\Forest(U))$. If $\tau$ is a 2-switch between $ab$ and $cd$, then there exists an edge $e\in E(\Cycles(U))$ such that $e\neq ab$ and $e\notin\tau(U-e)$.
\end{lemma}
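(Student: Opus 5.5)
The statement should follow directly from the definitions, with no real combinatorial content. Let $C=\Cycles(U)$ be the unique cycle of $U$ and write $\tau={{a \ b}\choose{c \ d}}$. Since $\tau$ is a 2-switch on $U$, we have $ab,cd\in E(U)$ and $ac,bd\notin E(U)$. Because $U$ is a simple graph, $C$ has length at least $3$, so it contains an edge $e\neq ab$. I will take $e$ to be any such edge and show that it has the required property.

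First I check that $\tau$ still makes sense as a 2-switch on $U-e$. The edge $ab$ lies in $U-e$ because $e\neq ab$. The edge $cd$ lies in $U-e$ because $cd\in E(\Forest(U))$, and $E(U)=E(\Cycles(U))\dot{\cup}E(\Forest(U))$ is a disjoint union, so $cd\neq e$. The edges $ac$ and $bd$ are absent from $U-e$ because they are already absent from $U$. Hence $\tau(U-e)=\bigl((U-e)-\{ab,cd\}\bigr)+\{ac,bd\}$ is well defined.

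Next I argue that $e\notin\tau(U-e)$. The only edges of $\tau(U-e)$ that are not edges of $U-e$ are $ac$ and $bd$. The edge $e$ is not in $U-e$, so it could appear in $\tau(U-e)$ only if $e\in\{ac,bd\}$. But $e\in E(U)$ while $ac,bd\notin E(U)$, which rules this out.

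I expect no genuine obstacle here. The only points needing care are the well-definedness check, which uses the disjointness of $E(\Cycles(U))$ and $E(\Forest(U))$, and the observation that the cycle has length at least $3$, so that an edge $e\neq ab$ exists. The lemma is presumably stated so that, as in \Cref{lem.tswitch.U-e}, one can later examine whether $\tau$ is a t-switch on the tree $U-e$ without the removed edge $e$ being recreated.
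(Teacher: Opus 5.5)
Your proof is correct, but it argues differently from the paper. The paper picks a specific edge. Since $cd\in E(\Forest(U))$, it assumes without loss of generality that $d\notin\Cycles(U)$. It then takes $e=bv$, where $v\neq a$ is the other cycle-neighbour of $b$, and notes that $v\neq d$. So $e\neq bd$, and $e\neq ac$ because $e$ contains $b$. That argument relies on where the vertices sit relative to the cycle and never uses the non-edge condition $ac,bd\notin E(U)$.

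You use precisely that condition. Because $\tau$ is a 2-switch on $U$, no edge of $U$ can be one of the added edges, so \emph{every} cycle edge $e\neq ab$ works. The hypothesis $cd\in E(\Forest(U))$ is then needed only to ensure $cd\neq e$, i.e.\ that $\tau$ is still a 2-switch on $U-e$, a check the paper leaves implicit.

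Your route is shorter and gives the stronger conclusion that $e$ can be chosen arbitrarily. It also shows that the hypothesis $e\notin\tau(G-e)$ in the observation following the lemma is automatic whenever $\tau$ is a 2-switch on $G$ itself. The paper's specific choice would matter only if one did not know $ac,bd\notin E(U)$. For instance, if $a,c$ were consecutive on the cycle, your arbitrary choice $e=ac$ would be re-created. But in the lemma, and in its use in part (2) of \Cref{uswitchcaract}, $\tau$ is a 2-switch on $U$, so nothing is lost.
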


\begin{proof}
	Let $\tau={{a \ b}\choose{c \ d}}$.
	Since $cd\in E(\Forest(U))$, at least one of the vertices $c, d$ does not belong to $\Cycles(U)$. Assume without loss of generality that $d\not\in \Cycles(U)$. Let $v\neq a$ be a neighbor of $b$ in $\Cycles(U)$, and let $e=bv$. Notice that $v\neq d$ because $d\not\in \Cycles(U)$. Therefore,
	\[e\not\in \tau(U-e)=((U-e)-\{ab,cd\})+\{ac,bd\}.\]\qedhere
\end{proof}

The following observation will be used many times. Let $\tau= {{a \ b}\choose{c \ d}}$ be a 2-switch on a graph $G$ and let $e\in E(G)-\{ab,\,cd\}$. If $e\notin\tau(G-e)$, then $\tau(G-e)+e=\tau(G)$. In particular, if $\tau(G-e)$ is a tree, then $\tau(G)$ is unicyclic. Next, we characterize the u-switches.

\begin{theorem}
	\label{uswitchcaract}
	Let $\tau$ be a 2-switch between two disjoint edges $e_{1}, e_{2}$ of a unicyclic graph $U$. Then, the following statements hold:
	\begin{enumerate}
		\item If $e_{1},e_{2}\in E(\Forest(U))$, then \(\tau\) is a u-switch on $U$ if and only if $\tau$ is a t-switch on $U-e$, for all $e\in E(\Cycles(U))$.

		\item If $e_{1}\in E(\Cycles(U))$ and $e_{2}\in E(\Forest(U))$, then \(\tau\) is a u-switch on $U$.

		\item If $e_{1},e_{2}\in E(\Cycles(U))$, then \(\tau\) is a u-switch on $U$ if and only if $\tau(\Cycles(U))\approx \Cycles(U)$.
	\end{enumerate}
\end{theorem}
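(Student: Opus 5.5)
We treat the three cases separately. Throughout we use the observation stated just before the theorem: if $e\in E(U)-\{e_1,e_2\}$ and $e\notin\tau(U-e)$, then $\tau(U)=\tau(U-e)+e$; in particular, if $\tau(U-e)$ is a tree then $\tau(U)$ is unicyclic. We also use that $\tau(U)$ is connected with $n$ edges if and only if it is unicyclic, since a 2-switch preserves the number of edges.

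\emph{Case 2.} Let $e_1=ab\in E(\Cycles(U))$ and $e_2=cd\in E(\Forest(U))$. By \Cref{lem2.tswitch.U-e} there is $e\in E(\Cycles(U))$, $e\neq ab$, with $e\notin\tau(U-e)$. So it suffices to show that $\tau$ is a t-switch on the tree $U-e$. Since $ab$ lies on the cycle and $e\neq ab$, the edge $ab$ still lies in $U-e$. We will use the criterion from \Cref{sectiontswitch}: $U-e$ must contain a path $ab\ldots cd$ or $ba\ldots dc$. With the labelling used in that lemma ($d\notin\Cycles(U)$ and $e=bv$), consider the unique path in the tree $U-e$ from $b$ to the edge $cd$. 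It must pass through $a$, because $b$'s only remaining cycle edge is $ab$, and $d$ hangs off the cycle in a pendant tree. The path from $a$ reaches the attachment vertex and then $c$ before $d$, since $d$ is farther from the cycle than $c$ or $c$ is the attachment. This gives the path $ba\ldots cd$, which is exactly the orientation making $\tau(U-e)$ a tree.

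This orientation check is the main obstacle. The labelling of $\tau$ fixes which endpoints are joined, so I would verify carefully, by tracing the path, that $b\ldots a\ldots c\,d$ has the right pattern. If it does not, I would choose $e$ at $a$ instead, or argue directly: removing $ab,cd$ splits $U$ into a path-connected part plus the pendant subtree containing $d$ (or $c$), and the new edges $ac,bd$ reconnect them. Connectivity plus $n$ edges then gives unicyclic.

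\emph{Case 1.} Both $e_i$ lie in $\Forest(U)$. If $\tau$ is a t-switch on $U-e$ for every cycle edge $e$, then by \Cref{lem.tswitch.U-e} we have $e\notin\tau(U-e)$, so $\tau(U)=\tau(U-e)+e$ is unicyclic. Conversely, suppose $\tau(U)$ is unicyclic, and fix a cycle edge $e$. The cycle $C$ of $U$ survives in $\tau(U)$ because no edge of $C$ is touched. Since $\tau(U)$ is unicyclic, $C$ is its unique cycle, so $\tau(U)-e=\tau(U-e)$ is a tree, and $\tau$ is a t-switch on $U-e$.

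\emph{Case 3.} Both $e_i$ lie on the cycle $C$, and the rest of $U$ consists of trees hanging at vertices of $C$, untouched by $\tau$. Here $\tau(U)$ is obtained from $\tau(C)$ by reattaching the same trees at the same vertices. Hence $\tau(U)$ is unicyclic if and only if $\tau(C)$ is connected. Now $\tau(C)$ is 2-regular on the vertex set of $C$, so it is either a single cycle of the same length, i.e.\ $\tau(C)\approx C$, or a disjoint union of two cycles. In the second case $\tau(U)$ is disconnected, so it is not unicyclic. This gives the stated equivalence.
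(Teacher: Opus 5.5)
Your Cases 1 and 3 are correct and essentially the paper's argument. In Case 1 you prove ($\Rightarrow$) directly where the paper uses the contrapositive, and in Case 3 you use 2-regularity of $\tau(C)$ instead of splitting on $|V(C)|$. Case 2, however, has a genuine error in its main line.

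\textbf{The orientation is the wrong one.} The path you derive, $b\,a\ldots c\,d$, is exactly the orientation for which $\tau$ is \emph{not} a t-switch. The criterion of \Cref{sectiontswitch} requires $ab\ldots cd$ or $ba\ldots dc$. With $ba\ldots cd$, the vertices $a$ and $c$ lie in the same component of $(U-e)-\{ab,cd\}$. So the new edge $ac$ closes a cycle, and $\tau(U-e)$ is a unicyclic graph plus a tree, not a tree. Separately, ``the path must pass through $a$'' fails when $cd$ lies in the branch hanging at $b$; there the orientation is $ab\ldots cd$.

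\textbf{The reduction itself cannot work.} More seriously, ``it suffices to show that $\tau$ is a t-switch on $U-e$'' is false in general, so neither your main line nor the fallback ``choose $e$ at $a$'' can succeed. Take $U$ to be the triangle $abx$ with pendant path $a\,y\,c\,d$, and $\tau={{a \ b}\choose{c \ d}}$. Then $\tau(U)$ has edges $bx,xa,ay,yc,ac,bd$; it is connected with six edges on six vertices, so $\tau$ is a u-switch. Yet for both admissible cycle edges $e\in\{bx,ax\}$, $\tau(U-e)$ contains the triangle $ayc$ and is disconnected. In general, whenever the branch containing $cd$ is attached at $a$ (with $d$ the far endpoint), every $U-e$ with $e\neq ab$ contains the path $ba\ldots cd$.

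\textbf{The fix is your own last-resort argument.} Your direct argument is correct and should replace the main line:
\begin{enumerate}
\item $U-ab$ is a tree, so $U-\{ab,cd\}$ has exactly two components, one containing $c$ and one containing $d$.
\item $a$ and $b$ lie in the same component, since $C-ab$ is an $a$--$b$ path not using $cd$.
\item Hence exactly one of $ac,bd$ joins the two components, while the other closes a cycle.
\item So $\tau(U)$ is connected with $|V(U)|$ edges, hence unicyclic.
\end{enumerate}

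\textbf{Comparison with the paper.} The paper keeps the edge $e$ from \Cref{lem2.tswitch.U-e}, handles both outcomes, and when $\tau(U-e)$ is not a tree (precisely your $ba\ldots cd$ situation) notes that it is a unicyclic component plus a tree that $e$ rejoins. Your corrected argument avoids choosing $e$ and the orientation bookkeeping entirely.
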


\begin{proof}
	\begin{enumerate}[label=(\arabic*).]
		\item (\(\Leftarrow\)) By \Cref{lem.tswitch.U-e}, \(e \notin \tau(U-e)\). Since \(\tau(U-e)\) is a tree, \(\tau(U-e)+e=\tau(U)\) is a unicyclic graph.

		(\(\Rightarrow\))  If $e_{1},e_{2}\in E(\Forest(U))$, $e\in E(\Cycles(U))$ and $\tau$ is not a t-switch on $U-e$, then \(\tau\) disconnects $U$.

		\item By \Cref{lem2.tswitch.U-e}, there exists \(e \in E(\Cycles(U))-e_{1}\) such that \(e\notin\tau(U-e)\). Clearly, $U-e$ is a tree. If $\tau(U-e)$ is a tree, then \(\tau(U-e)+e=\tau(U)\) is a unicyclic graph. Otherwise, $\tau(U-e)$ has two components: a unicyclic graph $U'$ and a tree $T$. Since $e$ links $U'$ to $T$, we have that $\tau(U-e)+e=\tau(U)$ is a unicyclic graph.

		\item Let $C$ be a cycle. If $|V(C)|=3$, we cannot apply any 2-switch on $C$. If $|V(C)|\in\{4,5\}$, then $\tau(C)\approx C$ (the symbol $\approx$ denotes the isomorphism relation), for every 2-switch $\tau$ on $C$. If $|V(C)|\geq 6$, then either $\tau(C)\approx C$ or $\tau(C)$ is the union of two disjoint cycles.

		(\(\Rightarrow\)) If $\tau(\Cycles(U))\not\approx \Cycles(U)$, then \(\tau\) disconnects $U$.

		(\(\Leftarrow\)) If $\tau(\Cycles(U))\approx \Cycles(U)$, then $\tau(U)$ is obviously unicyclic. \qedhere
	\end{enumerate}
\end{proof}

Recall that, if $V(\mathcal{F}(d))$ contains a tree, then all members of $V(\mathcal{F}(d))$ are trees as well. In contrast, if $V(\mathcal{P}(d))$ contains a unicyclic graph, then the rest of the pseudoforests in $V(\mathcal{P}(d))$ are not necessarily all unicyclic.

\section{p-switch}\label{sec-p-switch}

A 2-switch $\tau$ on a pseudoforest $G$ is said to be a \emph{p-switch} if $\tau(G)$ is a pseudoforest. Notice that t-switches, f-switches and u-switches are clearly particular cases of p-switches.

\begin{lemma}
	\label{lem2switchpswitch}
	Let $G$ be a pseudoforest with two components, $F$ and $U$, where $F$ is a forest and $U$ is a unicyclic graph. We have the following:
	\begin{enumerate}
		\item every 2-switch on $F$ is a p-switch;

		\item let $\tau= {{a \ b}\choose{c \ d}}$ be a 2-switch on $G$. If $ab\in E(F)$ and $cd\in E(\Forest(U))$, then $\tau$ is a p-switch on $G$.
	\end{enumerate}
\end{lemma}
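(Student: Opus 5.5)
The plan is to count cycles directly. A pseudoforest is exactly a graph in which every component has at most one cycle. Equivalently, every component $K$ satisfies $|E(K)|\le |V(K)|$. So in each case we will track how $\tau$ changes the components of $G$ and check this inequality component by component.

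For part (1), let $\tau={{a \ b}\choose{c \ d}}$ act on two edges of $F$. The graph $U$ is untouched, so it suffices to show that $\tau(F)$ is a pseudoforest. We split according to the observation made after the definition of f-switch.

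If $ab$ and $cd$ lie in different components of $F$, then $\tau$ is an f-switch, so $\tau(F)$ is a forest.

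If $ab$ and $cd$ lie in the same tree component $T$, consider $T-\{ab,cd\}$. It is a forest with three components on $V(T)$, and $\tau(T)$ is obtained from it by adding two edges. We then distinguish how the two added edges attach:
\begin{itemize}
\item if they join the three components into one, the result is a tree;
\item if one added edge closes a cycle inside a component and the other merges the remaining two, or both join the same pair of components, we get one unicyclic component and one tree, or a single unicyclic component.
\end{itemize}
In every case, each component $K$ satisfies $|E(K)|\le|V(K)|$. More uniformly, any component $K$ of $\tau(T)$ contains at most two of the three components of $T-\{ab,cd\}$, or all three. Adding two edges to a forest with $k$ components gives a graph whose components number at least $k-2$. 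When the three pieces end up in $j$ components of $\tau(T)$, the edge count is $|V(T)|-3+2=|V(T)|-1$, so there are $|V(T)|-1-(|V(T)|-j)=j-1$ independent cycles spread over $j$ components. Hence the cyclomatic number is at most $1$ per component unless some component receives $2$ independent cycles. That would force one component with both new edges inside a single original piece, which is impossible since $a,b$ (and $c,d$) lie in different pieces of $T-ab$ (respectively $T-cd$). This bookkeeping, showing no component gets two cycles, is the only delicate point, and I would present it through the cyclomatic number $|E|-|V|+\#\text{components}$ rather than by exhaustive pictures.

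For part (2), let $ab\in E(F)$ and $cd\in E(\Forest(U))$, say in the tree component $T$ of $F$ containing $ab$. The cycle $C$ of $U$ is not touched, so $\tau(G)$ still contains $C$. We compare $G-e$ and $\tau(G-e)$ for an edge $e\in E(C)$. In $G-e$ the edges $ab$ and $cd$ lie in different tree components $T$ and $U-e$, so $\tau$ is an f-switch on the forest $G-e$, and $\tau(G-e)$ is a forest. Since $e\ne ab,cd$ and $e\notin\tau(G-e)$ (its endpoints are not involved as new edges, because $ac,bd$ join $T$ with $U$), the observation preceding \Cref{uswitchcaract} gives $\tau(G)=\tau(G-e)+e$.

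Adding one edge to a forest yields a graph with at most one cycle overall, hence a pseudoforest. Even without this observation, $\tau(G)$ has $|E|=|V|-(\#\text{comp of }F)$ and contains $C$. Adding a single edge to a forest creates exactly one cycle in one component, which finishes the proof.
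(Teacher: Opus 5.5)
Your proposal is correct and follows the paper's proof. For part (2) you use exactly the paper's argument: pick $e$ on the cycle of $U$, observe that $\tau$ is an f-switch between different components of the forest $G-e$, check $e\notin\tau(G-e)$, and add $e$ back. For part (1) your cyclomatic-number bookkeeping carefully justifies the paper's one-line claim that a 2-switch on a forest creates at most one cycle, and your closing ``even without this observation'' remark is redundant, though nothing in the main argument depends on it.
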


\begin{proof}
	\begin{enumerate}[label=(\arabic*).]
		\item It is straightforward to see that every 2-switch on $F$ creates at most one cycle.

		\item Choose any $e\in E(\Cycles(U))$ and notice that $G-e$ is a forest. Since $ab$ and $cd$ are in different components, $\tau(G-e)$ is a forest and so $\tau(G-e)+e$ contains at most one cycle. Since $|e\cap \{a,b,c,d\}|\leq 1$, $e\notin\tau(G-e)$. Hence, $\tau(G-e)+e=\tau(G)$. \qedhere
	\end{enumerate}
\end{proof}

\begin{lemma}
	\label{2switch.in.C_U=pswitch.in.U}
     If $U$ is a unicyclic graph, then every 2-switch between two edges of $\Cycles(U)$ is a p-switch on $U$.
\end{lemma}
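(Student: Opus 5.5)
Let $C=\Cycles(U)$ be the unique cycle of $U$, and let $\tau={{a \ b}\choose{c \ d}}$ with $ab,cd\in E(C)$. The plan is to split $\tau(U)$ into the part coming from the cycle and the part coming from $\Forest(U)$. Since $E(U)=E(C)\,\dot\cup\, E(\Forest(U))$ and $\tau$ touches only edges of $C$, we have
\[
\tau(U)=\tau(C)\cup \Forest(U)
\]
as an edge-disjoint union on the vertex set $V(U)$. Here $\tau(C)$ is computed inside the vertex set $V(C)$.

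The first step is to use the observation made in the proof of \Cref{uswitchcaract}(3). Every 2-switch on a cycle $C$ yields either a graph isomorphic to $C$, so a single cycle on $V(C)$, or the disjoint union of two cycles $C_1,C_2$ covering $V(C)$. To check this, write the cycle as $a b P c d Q a$ or with the opposite orientation. Removing $ab,cd$ leaves two paths, and adding $ac,bd$ either closes them into one long cycle or closes each path into its own cycle.

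The second step is to add back $\Forest(U)$. Each component of $\Forest(U)$ is a tree meeting $V(C)$ in exactly one vertex, its root. Attaching such trees at single vertices of $\tau(C)$ creates no new cycles and merges no components of $\tau(C)$. Hence the cycles of $\tau(U)$ are exactly the cycles of $\tau(C)$, and each component of $\tau(U)$ contains exactly one component of $\tau(C)$ together with the hanging trees.

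The conclusion then follows. In the first case $\tau(U)$ is unicyclic. In the second case $\tau(U)$ has exactly two components, each containing exactly one cycle, so it is a pseudoforest. Either way $\tau$ is a p-switch.

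The only delicate point is justifying that $\Forest(U)$ attaches to $C$ in a tree-like way: each tree component of $\Forest(U)$ contains exactly one vertex of $C$. This holds because a tree component containing two vertices of $C$ would contain a path between them, and that path together with $C$ would create a second cycle in $U$. With this in hand, the cycle count of $\tau(U)$ is simply the cycle count of $\tau(C)$.
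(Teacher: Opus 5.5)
Your proof is correct and follows essentially the paper's approach. The paper's two-line proof says that either $\tau$ is a u-switch or $\tau(U)$ splits into two unicyclic components, relying on the cycle dichotomy noted in the proof of \Cref{uswitchcaract}(3). You make explicit what the paper leaves implicit: the trees of $\Forest(U)$ each hang from a single cycle vertex, so they neither create cycles nor merge the components of $\tau(C)$.
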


\begin{proof}
	Let $\tau$ be a 2-switch on $U$. If $\tau$ is a u-switch, then $\tau$ is obviously a p-switch. Otherwise, notice that $\tau(U)$ consists of two unicyclic components, and so it is a pseudoforest.
\end{proof}

Let $G$ be a graph. We denote by $\cycles(G)$ the number of subgraphs of $G$ isomorphic to a cycle. We say that $G$ has \emph{cyclicity} $c(G)$ if $\cycles(H)\leq c(G)$ for every component $H$ of $G$. Pseudoforests are exactly the graphs with cyclicity $\leq 1$.

\begin{lemma}
	\label{lema2switchpswitch}
	Let $\tau= {{a \ b}\choose{c \ d}}$ be a 2-switch on a pseudoforest $G$ with $c(G)=1$. Suppose that one of the following conditions holds:
	\begin{enumerate}
		\item $ab\in E(\Forest(G))$ and $cd\in E(\Cycles(G))$;

		\item $ab,cd\in E(\Cycles(G))$.
	\end{enumerate}
	Then, $\tau$ is a p-switch on $G$.
\end{lemma}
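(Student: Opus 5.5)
We prove the statement component by component. A 2-switch only changes the (at most two) components of $G$ containing $ab$ and $cd$; all other components of $\tau(G)$ are components of $G$, hence trees or unicyclic. Let $H$ be the union of the components of $G$ containing $ab$ and $cd$. It therefore suffices to show that $\tau(H)$ is a pseudoforest. Throughout we use one counting fact: a connected graph with $m$ edges and $m$ vertices is unicyclic, and one with $m-1$ edges is a tree. We also use that $\tau(H)=((H-cd)-ab)+\{ac,bd\}$.

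\textbf{Condition (2).} Here $ab,cd\in E(\Cycles(G))$. If both edges lie on the cycle of the same unicyclic component $U$, then \Cref{2switch.in.C_U=pswitch.in.U} gives the claim directly. Otherwise they lie on the cycles $C_1$ and $C_2$ of two distinct unicyclic components $U_1$ and $U_2$.
\begin{itemize}
\item Removing $ab$ turns $C_1$ into a path from $a$ to $b$, and removing $cd$ turns $C_2$ into a path from $c$ to $d$; both $U_1-ab$ and $U_2-cd$ are trees.
\item Adding $ac$ and $bd$ joins these two trees into a single connected graph.
\item This graph has $|V(U_1)|+|V(U_2)|$ edges and the same number of vertices, so it is unicyclic; its cycle is $a\ldots b\,d\ldots c\,a$.
\end{itemize}

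\textbf{Condition (1).} Here $ab\in E(\Forest(G))$ and $cd$ lies on the cycle of a unicyclic component $U$. Let $K$ be the component containing $ab$. There are three subcases.
\begin{itemize}
\item If $K=U$, then $\tau$ is a 2-switch between a cycle edge and a forest edge of $U$. By \Cref{uswitchcaract}(2) it is a u-switch, hence a p-switch.
\item If $K$ is a tree $T$, then $U-cd$ is a tree and $T-ab$ splits into two trees $T_a\ni a$ and $T_b\ni b$. Adding $ac$ attaches $T_a$ to $U-cd$, and adding $bd$ attaches $T_b$. So $\tau(H)$ is connected with $|V(H)|-1$ edges, i.e.\ a tree. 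Alternatively, this subcase follows from \Cref{lem2switchpswitch}(2) with the roles of the two edges exchanged.
\item If $K$ is a unicyclic component $U'\neq U$, then $ab\in E(\Forest(U'))$ is a bridge. Thus $U'-ab$ has two components: one unicyclic and one a tree. Together with the tree $U-cd$ this gives three components, and adding $ac$ and $bd$ links the component containing $a$ and the one containing $b$ to $U-cd$. Hence $\tau(H)$ is connected with $|E(H)|=|V(H)|$ edges, i.e.\ unicyclic.
\end{itemize}

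The main point needing care is the connectivity claims. In each case we must check that $ac$ and $bd$ really join distinct components, so that no new cycle appears beyond what the edge count allows. This holds because $c,d$ lie in $U-cd$, which is disjoint from the pieces containing $a$ and $b$ (or, in condition (2), in a different tree). Once connectivity is established, the edge count pins down the cyclicity. Hence $c(\tau(G))\le 1$ in every case, and $\tau$ is a p-switch.
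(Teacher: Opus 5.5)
Your proof is correct and follows essentially the same route as the paper. The same-component cases go through \Cref{uswitchcaract}(2) and \Cref{2switch.in.C_U=pswitch.in.U}, and the different-component cases cut $ab$ and $cd$ and relink the pieces via $ac$ and $bd$, with your edge count playing the role of the paper's ``path-like linking without creating new cycles.''

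One small slip: your ``alternatively'' remark in the tree subcase is not valid. \Cref{lem2switchpswitch}(2) requires the edge in the unicyclic component to lie in $E(\Forest(U))$, whereas here $cd$ lies on the cycle, and exchanging the roles of the two edges does not help. Your direct argument for that subcase is already complete, so nothing is lost.
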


\begin{proof}
	For each case of the hypothesis we have the following subcases:
	(A) $ab$ and $cd$ lie in the same component of $G$;
	(B) $ab$ and $cd$ lie in different components of $G$.
	\begin{enumerate}
		\item[(1.A)] Use \Cref{uswitchcaract}.

		\item[(2.A)] Use \Cref{2switch.in.C_U=pswitch.in.U}.

		\item[(1.B)] Let $H$ be the component of $G$ containing $ab$, and let $U$ be the component of $G$ containing $cd$. Since $cd \in E(\mathrm{Cyc}(G))$, the component $U$ is unicyclic. Since $ab \in E(\mathrm{For}(G))$, the edge $ab$ does not lie on any cycle of $G$; in particular, $ab$ is a bridge of $H$. By the case hypothesis, $H \ne U$.

		We analyze $\tau(H \,\dot\cup\, U)$ directly. Since $ab$ is a bridge, $H - ab$ has exactly two components: if $H$ is a tree, both are trees; if $H$ is unicyclic with cycle $\mathrm{Cyc}(H)$, one of them contains $\mathrm{Cyc}(H)$ and is therefore unicyclic, while the other is a tree. Let $H_a$ and $H_b$ denote the components of $H - ab$ containing $a$ and $b$, respectively. On the other hand, since $cd \in E(\mathrm{Cyc}(U))$, the graph $T_U = U - cd$ is a tree.

		The 2-switch $\tau$ adds $ac$ and $bd$ to $H_a \,\dot\cup\, H_b \,\dot\cup\, T_U$. Since $a \in V(H_a)$, $b \in V(H_b)$, $c, d \in V(T_U)$, and $V(H_a)$, $V(H_b)$, $V(T_U)$ are pairwise disjoint, the edges $ac$ and $bd$ link the three pieces in a path-like fashion ($H_a$---$T_U$---$H_b$) without creating cycles among them. Hence $\tau(H \,\dot\cup\, U)$ is connected and contains at most one cycle (namely $\mathrm{Cyc}(H)$, if $H$ is unicyclic). The remaining components of $G$ are not affected by $\tau$. Therefore $\tau(G)$ is a pseudoforest.

		\item[(2.B)] Note that $\tau$ glues the two cycles containing $ab$ and $cd$ together into a new cycle. Hence, $c(\tau(G)) = c(G)$. \qedhere
	\end{enumerate}
\end{proof}

The next theorem characterizes when a 2-switch transforms a pseudoforest into another pseudoforest.

\begin{theorem}
	\label{pswitchcaract}

	Let $\tau= {{a \ b}\choose{c \ d}}$ be a 2-switch on a pseudoforest $G$. Then, the following statements hold.
	\begin{enumerate}
		\item If \(ab\) and \(cd\) are in different components of \(\Forest(U)\), for some unicyclic component \(U\) of \(G\), then \(\tau\) is a p-switch if and only if it is a t-switch on \( U-e\) for all \(e \in E(\Cycles(U))\).

		\item If \( ab \in E(\Forest(U))\) and \(cd \in E(\Forest(U'))\), for some distinct unicyclic components \(U\) and \( U'\) of \(G\), then \(\tau\) is a p-switch if and only if

		the components of $(U\dot{\cup}U')-\{ab,cd\}$ containing $b$ and $c$ are both trees.

		\item In any other case, \(\tau\) is a p-switch.
	\end{enumerate}
\end{theorem}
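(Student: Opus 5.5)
We will organize the proof by classifying the pair of edges $ab,cd$ according to whether each lies in a tree component, in $\Forest(U)$ of a unicyclic component $U$, or in $\Cycles(U)$. We will also record whether the two edges lie in the same component or in different ones. For every configuration not covered by items (1) and (2), we will show that $\tau$ is a p-switch, and we will quote an earlier result wherever one applies. Item (3) will then be exactly the union of the leftover configurations.

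For item (3) we go through the configurations one at a time. If both edges lie in the same tree component, or in two different components at least one of which is a tree, the union of the components involved has cyclicity at most $1$. When both components are trees, $\tau$ acts on a forest and creates at most one cycle. This is \Cref{lem2switchpswitch}(1) applied to the forest part, with the untouched components ignored. When one component is a tree and the other is unicyclic, there are two subcases. If the unicyclic edge lies in $\Forest(U)$, we use \Cref{lem2switchpswitch}(2). If it lies in $\Cycles(U)$, we use \Cref{lema2switchpswitch}(1.B), which was proved for an arbitrary component $H$ containing the forest edge. Any configuration with at least one edge in $\Cycles(G)$ is handled by \Cref{lema2switchpswitch}: case (1) when one edge is a forest edge and case (2) when both are cycle edges. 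This needs the harmless remark that a pseudoforest with $c(G)=0$ has no cycle edges. The remaining case is two edges of $\Forest(U)$ in the same component of $\Forest(U)$, that is, in the same tree hanging at a cycle vertex. There we delete a cycle edge $e$ not incident with the root of that tree; such an edge exists because the cycle has length at least $3$. Then $U-e$ is a tree, $\tau(U-e)$ has at most two components, and $e\notin\tau(U-e)$. Adding $e$ back gives at most one cycle in each component, by the same argument as in \Cref{uswitchcaract}(2).

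For item (1), the two edges lie in different hanging trees of the same $U$. The implication ``$\Leftarrow$'' is \Cref{uswitchcaract}(1): the hypothesis makes $\tau$ a u-switch, hence a p-switch. For ``$\Rightarrow$'', suppose $\tau$ is not a t-switch on $U-e$ for some cycle edge $e$. Then $\tau(U-e)$ is a forest with two components, and by \Cref{lem.tswitch.U-e} it does not contain $e$. The cycle of $U$ passes through both attachment vertices, so $C-e$, which is untouched by $\tau$, keeps the two roots connected. From this we will argue that $\tau(U)$ is connected with two independent cycles, so it is not a pseudoforest. The cleanest route is to check that cutting $ab$ and $cd$ and reconnecting them the ``wrong'' way always yields a connected graph with cyclomatic number $2$.

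For item (2), we cut $ab$ and $cd$. Each of $U$ and $U'$ splits into a unicyclic piece and a tree piece, and $\tau$ reconnects them through $ac$ and $bd$. The result is a pseudoforest exactly when no new component contains both cycles. That happens precisely when the pieces containing $b$ and $c$ are trees, since otherwise $ac$ or $bd$ joins the two unicyclic pieces. We will verify this equivalence by enumerating the four possibilities for which side holds the cycle. The main obstacle is the ``$\Rightarrow$'' direction of item (1): turning the statement ``the cycle edge $e$ reconnects the split'' into a rigorous count of two cycles in a single component.
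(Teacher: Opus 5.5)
Your treatment of ``$\Leftarrow$'' in item~(1) and of the other subcases of item~(3) matches the paper's citations. There is, however, a genuine gap in the one subcase of item~(3) that no earlier lemma covers: $ab,cd$ in the \emph{same} component of $\Forest(U)$. Your step ``adding $e$ back gives at most one cycle in each component'' fails.

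Since $ab,cd$ are not cycle edges, the path $C-e$ (with $C=\Cycles(U)$) survives in $\tau(U-e)$. So both ends of $e$ always lie in one component of $\tau(U-e)$, and re-inserting $e$ always closes a new cycle there. You get a pseudoforest only when that component is the tree one. The analogy with \Cref{uswitchcaract}(2) breaks because there $\tau$ itself deletes a second cycle edge at $b$.

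Concretely, let $U$ be the triangle $rxy$ together with the edges $ra,ab,rc,cd$. Then $ab,cd$ lie in the component $\{r,a,b,c,d\}$ of $\Forest(U)$ and $ac,bd\notin E(U)$. But $\tau(U)$ has a component on $\{r,x,y,a,c\}$ containing the two triangles $rxy$ and $rac$. So item~(3) as stated is false in this configuration and cannot be proved. The paper's one-line proof of (3) cites \Cref{lem2switchpswitch} and \Cref{lema2switchpswitch}, neither of which treats two edges of $\Forest(U)$ for a single unicyclic $U$, so it has the same hole. The correct criterion for two edges of $\Forest(U)$, which also subsumes item~(1), is: $\tau$ is a p-switch if and only if neither $ac$ nor $bd$ has both ends in the component of $U-\{ab,cd\}$ containing $\Cycles(U)$.

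Item~(2) fails as well. Your claim ``otherwise $ac$ or $bd$ joins the two unicyclic pieces'' is wrong when the pieces containing $b$ and $c$ are \emph{both} unicyclic. Then $ac$ joins the tree piece of $a$ to the cyclic piece of $c$, and $bd$ joins the cyclic piece of $b$ to the tree piece of $d$. For instance, let $G$ be a triangle $bxy$ with pendant edge $ab$ plus a triangle $czw$ with pendant edge $cd$; then $\tau(G)$ is again two triangles with one pendant vertex each. So your announced four-case enumeration actually yields ``both trees or both unicyclic'', and the ``only if'' of item~(2) as stated is false; the paper's ``straightforward'' conceals this.

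Finally, your sketch of ``$\Rightarrow$'' in item~(1) reaches the right conclusion but misdescribes the graphs.
A 2-switch on a tree that is not a t-switch yields a unicyclic component plus a tree, not a forest. \Cref{lem.tswitch.U-e} is stated only for t-switches; $e\notin\tau(U-e)$ holds simply because $ac,bd\notin E(U)$. And $\tau(U)$ is disconnected, not connected with cyclomatic number~$2$. The correct finish is this. The unicyclic component of $\tau(U-e)$ is the middle piece containing the path between the two edges. Since the edges hang at different cycle vertices, that path runs through $C-e$, so the piece contains both ends of $e$. Adding $e$ then yields a component with two cycles.
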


\begin{proof}
	\begin{enumerate}[label=(\arabic*).]
		\item By \Cref{uswitchcaract}, $\tau$ is a u-switch on $U$. Hence, it is a p-switch on $G$.

		\item Straightforward.

		\item The remaining cases are covered by \Cref{lem2switchpswitch} and \Cref{lema2switchpswitch}. \qedhere
	\end{enumerate}
\end{proof}

\section{Pseudoforests} \label{sec-pesudoforest}

By $\kappa(G)$ we denote the number of components of a graph $G$.

\begin{lemma}
	\label{pseudosize}

	If $G$ is a pseudoforest, then $|E(G)|+\kappa(G)=|V(G)|+\cycles(G)$.
\end{lemma}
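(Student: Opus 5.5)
The plan is to reduce the identity to the corresponding statement for a single connected component and then sum over the components. The quantities $|E(G)|$, $|V(G)|$ and $\cycles(G)$ are all additive over the components of $G$. For $\cycles(G)$ this holds because every cycle subgraph is connected and therefore lies inside a single component. The quantity $\kappa(G)$ is just the number of components. So it suffices to prove that every component $H$ of a pseudoforest satisfies
\[ |E(H)|+1=|V(H)|+\cycles(H). \]
Summing this over the $\kappa(G)$ components then gives $|E(G)|+\kappa(G)=|V(G)|+\cycles(G)$.

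By the definition of pseudoforest, each component $H$ is either a tree or a unicyclic graph.

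\emph{Tree case.} $H$ contains no cycle, so $\cycles(H)=0$. The standard count $|E(H)|=|V(H)|-1$ gives the identity.

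\emph{Unicyclic case.} By definition $\cycles(H)=1$. I would take $e\in E(\Cycles(H))$; as already used in the paper (for instance in \Cref{lem.tswitch.U-e}), $H-e$ is a tree on the same vertex set. Hence $|E(H)|-1=|V(H)|-1$, that is, $|E(H)|=|V(H)|$, and so $|E(H)|+1=|V(H)|+\cycles(H)$.

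The only point needing care is the additivity of $\cycles$, and it is immediate from the connectedness of cycles. I do not expect any real obstacle in this argument.
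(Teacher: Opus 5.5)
Your proof is correct and is essentially the paper's argument: delete one cycle edge from each unicyclic component to get a spanning forest with the same number of components, then count its edges. The only difference is that you carry this out component by component and sum, while the paper does it globally using $|E(F)|=|V(G)|-\kappa(G)$ for the resulting forest $F$.
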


\begin{proof}
If we remove an edge from every cycle of $G$, then we obtain a forest $F$ such that $\kappa(F)=\kappa(G)$. Therefore, $|E(F)|=|V(G)|-\kappa(G)$. On the other hand, $|E(F)|=|E(G)|-\cycles(G)$ and hence $|E(G)|-\cycles(G)=|V(G)|-\kappa(G)$.
\end{proof}

\begin{proposition}
	\label{zeta_constant}
	The function $\zeta :V(\mathcal{P}(d))\rightarrow \mathbb{Z}$, defined by
	\[ \zeta(G)= \kappa(G)-\cycles(G), \]
	is a non-negative constant.
\end{proposition}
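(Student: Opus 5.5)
By \Cref{pseudosize}, every pseudoforest $G$ satisfies
\[
\zeta(G)=\kappa(G)-\cycles(G)=|V(G)|-|E(G)|.
\]
For $G\in V(\mathcal{P}(d))$ with $d=(d_1,\ldots,d_n)$, both terms on the right are determined by $d$ alone: $|V(G)|=n$, and by the handshake lemma $|E(G)|=\tfrac12\sum_{i} d_i$. Hence $\zeta(G)=n-\tfrac12\sum_i d_i$ for every $G\in V(\mathcal{P}(d))$, so $\zeta$ is constant on $V(\mathcal{P}(d))$. This argument does not need the connectedness of $\mathcal{P}(d)$ or any analysis of p-switches.

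It remains to show non-negativity. Each component of a pseudoforest is a tree or a unicyclic graph, so it contains at most one cycle. Summing over components gives $\cycles(G)\le\kappa(G)$, and therefore $\zeta(G)\ge 0$. Equivalently, $\zeta(G)$ is exactly the number of tree components of $G$, since each unicyclic component contributes $1-1=0$ and each tree component contributes $1-0=1$.

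There is no real obstacle here. The only point to check is that \Cref{pseudosize} applies to every member of $V(\mathcal{P}(d))$, which holds because these are pseudoforests by definition. If $V(\mathcal{P}(d))$ is empty, the statement holds vacuously.
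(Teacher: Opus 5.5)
Your proof is correct and follows the paper's argument: both use \Cref{pseudosize} to write $\zeta(G)=|V(G)|-|E(G)|$, which depends only on $d$, and both get $\zeta\geq 0$ from each component containing at most one cycle. Your extra remark that $\zeta(G)$ equals the number of tree components is correct, though the proof does not need it.
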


\begin{proof}
	By \Cref{pseudosize}, we have $|V(G)|-|E(G)|=\kappa(G)-\cycles(G)=\zeta(G)$. Since pseudoforests have at most one cycle per component, $\zeta\geq 0$. Since all vertices of $\mathcal{P}(d)$ have the same order and size, $\zeta$ is constant.
\end{proof}

\begin{corollary}\label{coro_ciclos}
	If $G,H\in V(\mathcal{P}(d))$, then $\cycles(G)=\kappa(G)$ if and only if $\cycles(H)=\kappa(H)$.
\end{corollary}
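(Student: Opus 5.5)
This is an immediate consequence of \Cref{zeta_constant}. The plan is to observe that the condition $\cycles(G)=\kappa(G)$ is exactly the condition $\zeta(G)=0$, and that $\zeta$ takes the same value on every vertex of $\mathcal{P}(d)$.

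Concretely, let $G,H\in V(\mathcal{P}(d))$. By the definition $\zeta(G)=\kappa(G)-\cycles(G)$, we have $\cycles(G)=\kappa(G)$ if and only if $\zeta(G)=0$, and likewise for $H$. \Cref{zeta_constant} asserts that $\zeta$ is constant on $V(\mathcal{P}(d))$, so $\zeta(G)=\zeta(H)$. This rests on \Cref{pseudosize}, which gives $\zeta(G)=|V(G)|-|E(G)|$, together with the fact that all realizations of $d$ share the same order and the same size $\frac12\sum_v d_v$. Hence $\zeta(G)=0$ if and only if $\zeta(H)=0$, which is the claim. By symmetry in $G$ and $H$ only one direction needs to be written out.

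There is no real obstacle here. The only point to check is that \Cref{pseudosize} applies to both graphs, which holds because both are pseudoforests by membership in $V(\mathcal{P}(d))$. It may also be worth remarking on the meaning of the statement: since each component of a pseudoforest has at most one cycle, $\cycles(G)=\kappa(G)$ says that every component is unicyclic. The corollary therefore says that if one pseudoforest realization of $d$ has all components unicyclic, then so does every other pseudoforest realization.
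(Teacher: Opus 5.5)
Your proposal is correct and matches the paper's proof: both observe that $\cycles(G)=\kappa(G)$ is equivalent to $\zeta(G)=0$ and conclude via the constancy of $\zeta$ on $V(\mathcal{P}(d))$ from \Cref{zeta_constant}. Your extra remark that the condition means every component is unicyclic is accurate.
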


\begin{proof}
	Since $\zeta(G)=0$, $\zeta$ is the zero function by \Cref{zeta_constant}.
\end{proof}

\begin{lemma}
	\label{pseudo>>>unic}

	Every pseudoforest $G$ with $\cycles(G)=\kappa(G)$ can be transformed into a unicyclic graph by a sequence of p-switches.
\end{lemma}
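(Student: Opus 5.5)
Since $\cycles(G)=\kappa(G)$ and pseudoforests have at most one cycle per component, every component of $G$ is unicyclic. I would argue by induction on $\kappa(G)$: if $\kappa(G)=1$ there is nothing to prove. Otherwise I would find a single p-switch that merges two unicyclic components $U,U'$ into one unicyclic component, which lowers $\kappa$ by one. By \Cref{zeta_constant}, $\zeta$ is constant, so the new graph again satisfies $\cycles=\kappa$.

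For the merging step I would take an edge $ab\in E(\Cycles(U))$ and an edge $cd\in E(\Cycles(U'))$. Both exist since each component contains a cycle, and they are disjoint because they lie in different components. Moreover $ac,bd\notin E(G)$, since these pairs join distinct components, so $\tau={{a \ b}\choose{c \ d}}$ is a valid 2-switch on $G$. By case (2.B) of \Cref{lema2switchpswitch} it is a p-switch.

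To see that it merges $U$ and $U'$ into a unicyclic graph, argue directly. $U-ab$ and $U'-cd$ are trees, and adding $ac$ and $bd$ joins them. The resulting graph is connected on $|V(U)|+|V(U')|$ vertices with $|E(U)|+|E(U')|=|V(U)|+|V(U')|$ edges, so it is unicyclic. Its cycle is the $a$--$b$ path in $U-ab$, followed by $bd$, the $d$--$c$ path in $U'-cd$, and $ca$. The other components are untouched, so $\tau(G)$ is a pseudoforest with $\kappa(\tau(G))=\kappa(G)-1$, and the induction applies.

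There is no real obstacle. The only point needing care is the counting argument showing the merged component is unicyclic rather than containing two cycles. The edge count handles this once connectivity is established, and connectivity follows because $ac$ links the trees $U-ab$ and $U'-cd$.
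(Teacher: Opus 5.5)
Your proposal is correct and follows essentially the same route as the paper: repeatedly apply a 2-switch between a cycle edge of one unicyclic component and a cycle edge of another. This is a p-switch by case (2.B) of \Cref{lema2switchpswitch}, and it merges the two components into a single unicyclic component, lowering $\kappa$ by one. Your explicit connectivity-plus-edge-count argument for the merged component makes precise what the paper only asserts by pointing to the proof of that lemma.
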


\begin{proof}
	If $G$ is connected, we are done. If $\kappa(G)\geq 2$, then observe that we can link two components $U$ and $U'$ of $G$ by performing a 2-switch $\tau$ between $e_{1}\in E(\Cycles(U))$ and $e_{2}\in E(\Cycles(U'))$. By \Cref{pswitchcaract}, we know that $\tau$ is a p-switch on $G$. By the proof of \Cref{lema2switchpswitch}, we know that $\tau(U\dot{\cup}U')$ is a unicyclic graph. Now, $\kappa(\tau(G))=\kappa(G)-1$. Therefore, we repeat the process until we obtain a connected pseudoforest $H$. By \Cref{coro_ciclos}, $\cycles(H)=\kappa(H)=1$. Thus, $H$ is a unicyclic graph.
\end{proof}

\begin{lemma}
	\label{pseudo>>>forest}

	Every pseudoforest $G$ with $\cycles(G)<\kappa(G)$ can be transformed into a forest by a sequence of p-switches.
\end{lemma}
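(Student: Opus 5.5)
Since $\cycles(G)<\kappa(G)$, $G$ has at least one tree component. The plan is to induct on $\cycles(G)$, using one p-switch per step that destroys a cycle. If $\cycles(G)=0$ there is nothing to prove, because $G$ is already a forest. Otherwise, let $U$ be a unicyclic component of $G$ and let $T$ be a tree component. By \Cref{zeta_constant}, $\kappa-\cycles$ stays constant along p-switches, so once we show $\cycles$ drops by one, $\kappa$ drops by one as well. The hypothesis $\cycles<\kappa$ then persists, and the induction applies.

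First I want to arrange that $T$ contains an edge. If every tree component is an isolated vertex, I would argue differently: those vertices play no role in any 2-switch, and the degree-sequence count then forces every remaining component to be unicyclic. I expect to handle this by reducing to the case without isolated vertices, exactly as in \Cref{IPTT}. However, $\zeta$ counts isolated vertices, so I need to be careful here. If all tree components are single vertices, removing them gives $\zeta=0$ on the rest, and then no forest with the same (nonzero-entry) degree sequence exists. This suggests the statement implicitly assumes no isolated vertices. I would state that assumption, or note that when the only trees are isolated vertices the conclusion must be read on the graph without them. So I assume $T$ has an edge $xy$.

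Next, pick an edge $ab\in E(\Cycles(U))$ and set $\tau={{a \ b}\choose{x \ y}}$, removing $ab$ and $xy$ and adding $ax$ and $by$. This is a valid 2-switch, since $ax,by\notin E(G)$ because they lie in different components. The component $U-ab$ is a tree, and $T-xy$ splits into two trees $T_x\ni x$ and $T_y\ni y$. Adding $ax$ and $by$ joins the three trees $T_x$, $U-ab$ and $T_y$ into a path-like structure, exactly as in case (1.B) of \Cref{lema2switchpswitch}, so the result is a single tree. Hence $\tau(G)$ is a pseudoforest, so $\tau$ is a p-switch, and $\cycles(\tau(G))=\cycles(G)-1$.

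The main obstacle is only the degenerate situation where the tree components have no edges, discussed above. Apart from that, the induction closes immediately, and iterating the step yields a forest after $\cycles(G)$ p-switches.
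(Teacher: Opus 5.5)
Your proof is correct, and it uses a different decomposition from the paper's. The paper first applies \Cref{pseudo>>>unic} to the union of the unicyclic components. This merges them, one switch between two cycle edges at a time, into a single unicyclic component $U$. Only then does it perform one switch between an edge of the forest part and an edge of $\Cycles(U)$, relying on case (1.B) of \Cref{lema2switchpswitch}. You skip the merging phase. Instead, you absorb each unicyclic component directly into a tree component by a cycle-edge/tree-edge switch and induct on $\cycles(G)$. You check the key step by hand: the three trees $T_x$, $U-ab$, $T_y$ are joined by $ax$ and $by$. The merged component is again a tree with edges, so the induction continues. Both routes use exactly $\cycles(G)$ p-switches. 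The paper's proof is shorter on the page because it reuses \Cref{pseudo>>>unic}. Yours is self-contained and uses a single type of switch whose effect is verified explicitly.

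Your concern about isolated vertices is justified, and it is not an artifact of your method. The paper's proof also silently needs $E(F)\neq\varnothing$ when it picks $e_1\in E(F)$. As literally stated, the lemma is false when every tree component is trivial. For example, $C_3\dot{\cup}K_1$ has $\cycles=1<2=\kappa$. Yet a forest on $4$ vertices with $3$ edges must be a tree, and a tree on $4$ vertices cannot have an isolated vertex. The correct hypothesis is that $G$ has a tree component with at least one edge, which holds, for instance, when $G$ has no isolated vertices. That is exactly the assumption you end up making. Your aside about "reading the conclusion on the graph without them" does not rescue the degenerate case, since removing the isolated vertices gives $\cycles=\kappa$, which is outside the lemma's scope. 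Simply state the extra hypothesis.
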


\begin{proof}
	Every pseudoforest $G$ with $\cycles(G)<\kappa(G)$ can be written as $G=H\dot{\cup}F$, where $F$ is a forest and $H$ is a pseudoforest such that each of its components is a unicyclic graph, i.e., $\cycles(H)=\kappa(H)$. Then, we can apply \Cref{pseudo>>>unic} to $H$ to obtain from $G$ a pseudoforest $G'=U\dot{\cup}F$, where $U$ is a unicyclic graph. Now, perform a 2-switch $\tau$ between $e_{1}\in E(F)$ and $e_{2}\in E(\Cycles(U))$. Then, $\tau$ is a p-switch by  \Cref{pswitchcaract} and $\tau(G')$ is a forest by the proof of \Cref{lema2switchpswitch}.
\end{proof}

\begin{theorem}
	\label{pseudoTransition}

	$\mathcal{P}(d)$ is connected.
\end{theorem}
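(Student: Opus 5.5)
The plan is to route both pseudoforests through a canonical intermediate class, using \Cref{coro_ciclos} to split into two cases. Let $G,H\in V(\mathcal{P}(d))$. By \Cref{coro_ciclos}, either $\cycles(G)=\kappa(G)$ and $\cycles(H)=\kappa(H)$, or $\cycles(G)<\kappa(G)$ and $\cycles(H)<\kappa(H)$. In each case I will move both $G$ and $H$ into a subfamily already known to be connected, and then glue the pieces together.

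First I record that p-switch paths can be reversed. If $\tau={{a \ b}\choose{c \ d}}$ is a 2-switch on $G$, then ${{a \ c}\choose{b \ d}}$ is a 2-switch on $\tau(G)$ and it returns $\tau(G)$ to $G$. Hence a sequence of p-switches from $H$ to some $H'$ yields, read backwards, a sequence of p-switches from $H'$ to $H$, since the intermediate graphs are the same pseudoforests. In other words, $\mathcal{P}(d)$ is an undirected graph, so it suffices to connect $G$ and $H$ to a common connected subgraph of $\mathcal{P}(d)$.

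\textbf{Case $\cycles<\kappa$.} By \Cref{pseudo>>>forest}, there are p-switch sequences taking $G$ to a forest $F$ and $H$ to a forest $F'$. Both lie in $V(\mathcal{F}(d))$, since 2-switches preserve the degree sequence. By \Cref{IPTT}, there is an f-switch sequence from $F$ to $F'$, and every f-switch is a p-switch because forests are pseudoforests. Concatenating the path $G\to F$, the path $F\to F'$, and the reversal of $H\to F'$ gives a path from $G$ to $H$ in $\mathcal{P}(d)$.

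\textbf{Case $\cycles=\kappa$.} By \Cref{pseudo>>>unic}, $G$ and $H$ can be transformed by p-switches into unicyclic graphs $U$ and $U'$ with degree sequence $d$. Here I use \Cref{Teo:Tay}: every graph in $V(\mathcal{G}(d))$ has the same order $n$ and size $n$, so every connected member of $V(\mathcal{G}(d))$ is unicyclic. Hence $\mathcal{K}(d)=\mathcal{U}(d)$, and $\mathcal{U}(d)$ is connected. This gives a sequence of u-switches from $U$ to $U'$, and these are p-switches. Concatenating as before finishes this case.

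I expect no step to be a serious obstacle, since the heavy lifting is done in \Cref{pseudo>>>unic}, \Cref{pseudo>>>forest}, \Cref{IPTT} and \Cref{Teo:Tay}. The points needing care are the reversibility of 2-switches and the identification $\mathcal{K}(d)=\mathcal{U}(d)$ via the edge count, both of which I will state explicitly.
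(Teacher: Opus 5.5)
Your proposal is correct and is essentially the paper's own proof. You split the cases via \Cref{coro_ciclos}, push $G$ and $H$ into $\mathcal{U}(d)$ (connected by Taylor's theorem, since every connected realization has $n$ edges and is unicyclic) or into $\mathcal{F}(d)$ (connected by \Cref{IPTT}) using \Cref{pseudo>>>unic} or \Cref{pseudo>>>forest}, and concatenate. Your remarks on reversibility and on $\mathcal{K}(d)=\mathcal{U}(d)$ only make explicit what the paper leaves implicit.

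One caveat you inherit from the paper: \Cref{pseudo>>>forest} needs some acyclic component to contain an edge. It fails for $C_3\,\dot{\cup}\,K_1$, i.e.\ $d=(2,2,2,0)$, which has no forest realization. So first discard the degree-$0$ vertices, which are isolated in every realization and never take part in a 2-switch, and only then apply the case split.
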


\begin{proof}
	Let $G,H\in V(\mathcal{P}(d))$. If $\cycles(G)=\kappa(G)$, then $\cycles(H)=\kappa(H)$ as well, by \Cref{coro_ciclos}. Now, apply \Cref{pseudo>>>unic} to $G$ and $H$ to obtain respectively $U,U'\in V(\mathcal{U}(d))$. Since $\mathcal{U}(d)$ is connected, we can transform $U$ into $U'$ by a sequence of u-switches and hence we can transform $G$ into $H$ by a sequence of p-switches.

    If $\cycles(G)<\kappa(G)$, then  $\cycles(H)<\kappa(H)$ by \Cref{coro_ciclos} and \Cref{zeta_constant}. Now, apply \Cref{pseudo>>>forest} to $G$ and $H$ to obtain respectively $F,F'\in V(\mathcal{F}(d))$. By \Cref{IPTT}, we can transform $F$ into $F'$ by a sequence of f-switches and hence we can transform $G$ into $H$ by a sequence of p-switches.
\end{proof}

\section{Bipartite and non-bipartite graphs}\label{sec_bipartite_nonbip}

So far, we have only seen examples of connected induced subgraphs of $\mathcal{G}(d)$. A rather uninteresting example of a disconnected induced subgraph of $\mathcal{G}(d)$ could be obtained by finding two graphs $G,H \in V(\mathcal{G}(d))$ such that $|E(G)-E(H)| \geq 3$, and then considering the subgraph $\mathcal{X}$ induced by $\{G,H\}$. Since it is not possible to transform $G$ into $H$ via a single 2-switch (since otherwise, $|E(G)-E(H)|=2$), it follows that $\mathcal{X}$ has no edges. In this section, we consider the subgraph of $\mathcal{G}(d)$ induced by bipartite graphs and the subgraph of $\mathcal{G}(d)$ induced by non-bipartite graphs. In both cases, we show that these subgraphs are not connected in general.\\

For $n\geq 3$, consider the bipartite graph $B_n$ obtained by attaching two leaves to $K_{n,n}$, in such a way that the distance between them is 3 (i.e., one leaf in a part of the bipartition and one in the other; see \Cref{contrabipartitos}). Thus, $B_n$ has two vertices of degree $n+1$, two vertices of degree $1$ and the remaining $2n-2$ vertices of degree $n$. If $\tau$ is a 2-switch on $B_n$, we will show that $\tau(B_n )$ is non-bipartite or $\tau(B_n )\approx B_n$. Applying $\tau$ between the edges $e$ and $e'$, we can distinguish 3 cases:
\begin{enumerate}[label=(\arabic*).]
	\item $e$ is incident to a leaf but $e'$ is not. Let $(X,Y)$ be the bipartition of $V(B_n)$, and let $\ell v, xy\in E(B_n)$, with $v,x\in X$ and $d_{\ell}=1$. Then, apply $\tau={{\ell \ v}\choose{y \ x}}$ on $B_n$ and notice that $\tau(B_n)$ now contains a triangle $vxuv$ for every $u\in Y-y$.

	\item $e$ and $e'$ are not incident to leaves. Let $x,x',y,y'$ be vertices of $B_n$ with degree $\geq 2$. If $x,x'\in X$ and $y,y'\in Y$, perform $\tau={{x \ y}\choose{x' \ y'}}$ on $B_n$. Then, $\tau(B_n)$ contains triangles of the form $xx'ux$ and $yy'vy$, for every $u\in Y-\{y,y'\}$ and for every $v\in X-\{x,x'\}$.

	\item $e$ and $e'$ are both incident to leaves. Applying on $B_n$ the unique 2-switch between the edges incident to the leaves, we clearly obtain a graph isomorphic to $B_n$ (the systematic study of graphs $G$ for which every 2-switch yields a graph isomorphic to $G$ was carried out in Section 2 of \cite{BrualdiFernandesFurtado2019}).
\end{enumerate}
Thus, every 2-switch sequence transforming $B_n$ into a bipartite graph $B\not\approx B_n$ must pass through a non-bipartite graph. An example of such a graph $B$ is, for $n\geq 3$, the graph $B'_n$ obtained by attaching two leaves to $K_{n,n}$ in such a way that the distance between them is 4 (i.e., both leaves are in the same part of the bipartition; see \Cref{contrabipartitos}).

\begin{figure}[h]
	\[
	\begin{tikzpicture}
		[scale=.7,auto=left,every node/.style={scale=.6,circle,thick,draw}]
		\node [draw=none, scale=1.5] at (-1.5,0) {$B_3$};
		\node [label=above:,fill](1) at (1,2) {};
		\node [label=above:](2) at (3.5,2) {};
		\node [label=above:,fill](3) at (1,0) {};
		\node [label=below:](4) at (3.5,0) {};
		\node [label=below:,fill](5) at (1,-2) {};
		\node [label=below:](6) at (3.5,-2) {};
		\node [label=below:,fill](7) at (4.5,-1) {};
		\node [label=above:](8) at (0,1) {};
		\draw (1) -- (2) ;
		\draw (2) -- (3) ;
		\draw (3) -- (4);
		\draw (4) -- (5) ;
		\draw (5) -- (6) ;
		\draw (1) -- (8) ;
		\draw (7) -- (6) ;
		\draw (6) -- (1) ;
		\draw (1) -- (4) ;
		\draw (3) -- (6) ;
		\draw (2) -- (5);
		\node [draw=none, scale=1.5] at (7.5,0) {$B'_3$};
		\node [label=above:,fill](1b) at (9,2) {};
		\node [label=above:](2b) at (11.5,2) {};
		\node [label=above:,fill](3b) at (9,0) {};
		\node [label=below:](4b) at (11.5,0) {};
		\node [label=below:,fill](5b) at (9,-2) {};
		\node [label=below:](6b) at (11.5,-2) {};
		\node [label=below:,fill](7b) at (12.5,-1) {};
		\node [label=above:,fill](8b) at (12.5,1) {};
		\draw (1b) -- (2b) ;
		\draw (2b) -- (3b) ;
		\draw (3b) -- (4b) ;
		\draw (4b) -- (5b) ;
		\draw (5b) -- (6b) ;
		\draw (2b) -- (8b) ;
		\draw (7b) -- (6b) ;
		\draw (6b) -- (1b) ;
		\draw (1b) -- (4b) ;
		\draw (3b) -- (6b) ;
		\draw (2b) -- (5b);
	\end{tikzpicture}
	\]
	\caption{Every 2-switch sequence transforming $B_3$ into $B'_3$ must pass through
		a non-bipartite graph.}
	\label{contrabipartitos}
\end{figure}

Let $N_k$ be the non-bipartite graph with the following two components: a triangle $K_3$ together with a star $S_k$ of order $k$, for $k\geq 4$. Then, $N_k$ has $k+3$ vertices: one vertex of degree $k-1$, three vertices of degree 2 and the remaining $k-1$ vertices of degree 1. Observe that the only 2-switches we can perform on $N_k$ are between an edge of $K_3$ and an edge of $S_k$. Moreover, if $\tau$ is such a 2-switch, it is easy to see that $\tau(N_k)$ is always a tree, that is, a bipartite graph. Thus, every 2-switch sequence transforming $N_k$ into a non-bipartite graph $N\not\approx N_k$ must pass through a bipartite graph. As an example of such an $N$, we can take the graph $N'_k$ ($k\geq 4$), formed by 2 components: a path of order 3 and a triangle with $k-3$ leaves attached to one of its vertices (see \Cref{contranobipartitos}).

\begin{figure}[h]
	\[
	\begin{tikzpicture}
		[scale=.7,auto=left,every node/.style={scale=.6,circle,thick,draw}]
		\node [draw=none, scale=1.5] at (-0.75,0) {$N_4$};
		\node [label=above:](1) at (1,2) {};
		\node [label=above:](2) at (3.5,2) {};
		\node [label=above:](3) at (1,0) {};
		\node [label=below:](4) at (3.5,0) {};
		\node [label=below:](5) at (1,-2) {};
		\node [label=below:](6) at (3.5,-2) {};
		\node [label=below:](7) at (4.5,-1) {};
		\draw (1) -- (2) ;
		\draw (2) -- (3) ;
		\draw (3) -- (1);
		\draw (5) -- (6) ;
		\draw (7) -- (6) ;
		\draw (4) -- (6) ;
		\node [draw=none, scale=1.5] at (7.5,0) {$N'_4$};
		\node [label=above:](1b) at (9,2) {};
		\node [label=above:](2b) at (11.5,2) {};
		\node [label=above:](3b) at (9,0) {};
		\node [label=below:](4b) at (11.5,0) {};
		\node [label=below:](5b) at (9,-2) {};
		\node [label=below:](6b) at (11.5,-2) {};
		\node [label=below:](7b) at (12.5,-1) {};
		\draw (1b) -- (2b) ;
		\draw (2b) -- (3b) ;
		\draw (3b) -- (1b) ;
		\draw (4b) -- (3b) ;
		\draw (5b) -- (6b) ;
		\draw (7b) -- (6b) ;
	\end{tikzpicture}
	\]
	\caption{Every 2-switch sequence transforming $N_4$ into $N'_4$ must pass through
		a bipartite graph.}
	\label{contranobipartitos}
\end{figure}

\section{Stability and interval property}\label{sectionIntervalproperty}

Let $\mathcal{X}$ be a connected induced subgraph of $\mathcal{G}(d)$ and let \(\xi:V(\mathcal{X})\rightarrow \mathbb{Z}\). We say that \(\xi\) is \emph{stable} (under $2$-switch) in $\mathcal{X}$ if, for each $G\in V(\mathcal{X})$, we have
\[
\left| \xi(\tau(G))-\xi(G)\right| \leq 1,
\]
for every 2-switch $\tau$ on $G$ such that $\tau(G)\in V(\mathcal{X})$. Clearly, if $\xi$ is stable in $\mathcal{G}(d)$, then it is stable in every connected induced subgraph of $\mathcal{G}(d)$. When $\mathcal{X}=\mathcal{G}(d)$, we simply say that $\xi$ is stable. Let $G,H\in V(\mathcal{X})$. Notice that if $\xi$ is stable in $\mathcal{X}$, then
\begin{equation*}
	dist_{\mathcal{X}}(G,H)\geq |\xi(G)-\xi(H)|,
\end{equation*}
where $dist_{\mathcal{X}}(*,*)$ is the usual path-metric in ${\mathcal{X}}$. This inequality gives an interesting way to estimate $dist_{\mathcal{X}}(G,H)$ (hard to determine in general) by choosing a suitable stable parameter easier to compute on $G$ and $H$.

We say that $\xi$ has the \emph{interval property} in $\mathcal{X}$ if $\xi(V(\mathcal{X}))=I\cap \mathbb{Z}$, for some interval $I\subseteq \mathbb{R}$. In other words, if $\xi_{\max}$ and $\xi_{\min}$ are the maximum and minimum values attained by $\xi$, then $\xi$ has the interval property in $\mathcal{X}$ if, for every integer $k$ in the interval $[\xi_{\min}, \xi_{\max}]$, there is a $G\in V(\mathcal{X})$ such that $\xi(G)=k$. One could think of this property as a discrete analog of the Intermediate Value Theorem from elementary calculus. The next theorem shows that stability implies interval property. This idea was used in \cite{BockRat} to prove the interval property for the matching number in the family of bipartite graphs with a given bipartite degree sequence, but here we formalize it in order to apply it to many parameters.

\begin{theorem} \label{printparamgeneral}
	Let $\mathcal{X}$ be a connected induced subgraph of $\mathcal{G}(d)$ and let \(\xi:V(\mathcal{X})\rightarrow \mathbb{Z}\). If \(\xi\) is stable in $\mathcal{X}$, then \(\xi\) has the interval property in $\mathcal{X}$.

\end{theorem}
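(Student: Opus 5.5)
The argument is a discrete intermediate value theorem along a path in $\mathcal{X}$. Since $V(\mathcal{X})\subseteq V(\mathcal{G}(d))$ is finite (there are finitely many graphs on a fixed vertex set), $\xi$ attains a maximum $\xi_{\max}=\xi(G_{\max})$ and a minimum $\xi_{\min}=\xi(G_{\min})$ for some $G_{\max},G_{\min}\in V(\mathcal{X})$. Fix an integer $k$ with $\xi_{\min}\le k\le \xi_{\max}$. It suffices to exhibit $G\in V(\mathcal{X})$ with $\xi(G)=k$; then $\xi(V(\mathcal{X}))=[\xi_{\min},\xi_{\max}]\cap\mathbb{Z}$.

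Because $\mathcal{X}$ is connected, there is a path $G_{\min}=G_0,G_1,\ldots,G_m=G_{\max}$ in $\mathcal{X}$. Consecutive graphs are adjacent in $\mathcal{X}$, and $\mathcal{X}$ is an induced subgraph of $\mathcal{G}(d)$, so $G_{i}=\tau_i(G_{i-1})$ for some 2-switch $\tau_i$ with $\tau_i(G_{i-1})\in V(\mathcal{X})$. Stability then gives $|\xi(G_i)-\xi(G_{i-1})|\le 1$ for every $i$.

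Let $i_0$ be the least index with $\xi(G_{i_0})\ge k$. This index exists because $\xi(G_m)=\xi_{\max}\ge k$. If $i_0=0$, then $k\le \xi(G_0)=\xi_{\min}\le k$, so $\xi(G_0)=k$. Otherwise $\xi(G_{i_0-1})\le k-1$, and integrality together with the step bound gives $\xi(G_{i_0})\le \xi(G_{i_0-1})+1\le k$, hence $\xi(G_{i_0})=k$.

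There is no real obstacle. The only points to check are that adjacency in $\mathcal{X}$ corresponds to a 2-switch whose result stays in $V(\mathcal{X})$, which holds because $\mathcal{X}$ is induced, so the stability hypothesis applies, and that the extrema exist, which follows from finiteness.
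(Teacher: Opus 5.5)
Your proof is correct and follows the paper's argument: connect a minimizer to a maximizer by a path in $\mathcal{X}$, note that each step is a 2-switch staying in $V(\mathcal{X})$ so $\xi$ changes by at most $1$, and conclude that every intermediate integer is attained. Your version is slightly more careful than the paper's, since you justify that the extrema exist by finiteness and make the discrete intermediate value step explicit via the least index $i_0$ with $\xi(G_{i_0})\ge k$.
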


\begin{proof}
Consider two graphs $G_{1},G_{2}\in V(\mathcal{X})$ such that $\xi(G_{1})$ and $\xi(G_{2})$ are, respectively, the minimum and the maximum value for $\xi$. As $\mathcal{X}$ is connected, there exists a  2-switch sequence $(\tau_{i})$ transforming $G_{1}$ into $G_{2}$, such that every intermediate graph of the transformation is also a vertex of $\mathcal{X}$. Since each $\tau_{i}$ perturbs $\xi$ by at most 1, every integer value in the interval $[\xi(G_{1}),\xi(G_{2})]$ must be attained in some graph of the transition. Thus, $\xi$ has the interval property in $\mathcal{X}$.
\end{proof}

Note that \Cref{printparamgeneral} provides a way to obtain a forest $F$ with $\xi(F)=k$, provided that $\xi$ is stable in $\mathcal{F}(d)$, where $d=d(F)$. First, find two forests $F_{1},F_{2}\in V(\mathcal{F}(d))$ such that $\xi_{1}=\xi(F_{1})\leq k\leq\xi_{2}=\xi(F_{2})$. Next, apply the Transition Algorithm \eqref{algtrans} to transform $F_{1}$ into $F_{2}$. Then, the required $F$ is one of the intermediate forests of the transition.

Another important observation about \Cref{printparamgeneral} is that the converse is not true, i.e., interval property does not imply stability in general. An interesting counterexample is obtained by computing the diameter $\delta$ of trees with degree sequence $d=(3,2,2,2,2,2,2,1,1,1)$. One can easily see by inspection that $\{6,7,8\}$ are the only values attained by $\delta$ on $V(\mathcal{F}(d))$. So, $\delta$ has the interval property in $\mathcal{F}(d)$. However, there is a tree $T\in V(\mathcal{F}(d))$ and a t-switch $\tau$ on $T$ such that $\delta(\tau(T)) - \delta(T)=2$, which shows that $\delta$ is not stable in $\mathcal{F}(d)$ (see \Cref{contradiam}).

\begin{figure}[h]
	\[
	\begin{tikzpicture}
		[scale=.7,auto=left,
		every node/.style={scale=.6,circle,thick,draw},
		every label/.append style={scale=1.5}]
        \node [draw=none, scale=1.5] at (4.75,2.4) {$T$};
		\node [label=above:](1) at (2,1.5) {};
		\node [label=above:](2) at (4,1.5) {};
		\node [label=right:$a$, fill](3) at (6,1.5) {};
		\node [label=below:](4) at (2,0) {};
		\node [label=below:](5) at (4,0) {};
		\node [label=below:](6) at (6,0) {};
		\node [label=below:$b$,fill](10) at (7.5,0) {};
		\node [label=above:$d$,fill](7) at (2,-1.5) {};
		\node [label=above:$c$,fill](8) at (4,-1.5) {};
		\node [label=above:](9) at (6,-1.5) {};
		\draw (1) -- (2); \draw (2) -- (3); \draw[thick] (3) -- (10);
		\draw (6) -- (10); \draw (9) -- (10); \draw (4) -- (5);
		\draw (5) -- (6); \draw[thick] (7) -- (8); \draw (8) -- (9);
        \node [draw=none, scale=1.5] at (12.25,2.4) {$T'$};
		\node [label=above:](1b) at (9.5,1.5) {};
		\node [label=above:](2b) at (11.5,1.5) {};
		\node [label=right:](3b) at (13.5,1.5) {};
		\node [label=above:$a$,fill](4b) at (9.5,0) {};
		\node [label=above:$c$,fill](5b) at (11.5,0) {};
		\node [label=below:](6b) at (13.5,0) {};
		\node [label=below:$b$,fill](10b) at (15,0) {};
		\node [label=above:](7b) at (9.5,-1.5) {};
		\node [label=above:](8b) at (11.5,-1.5) {};
		\node [label=above:$d$,fill](9b) at (13.5,-1.5) {};
		\draw (1b) -- (2b); \draw (2b) -- (3b); \draw (3b) -- (10b);
		\draw (6b) -- (10b); \draw[thick] (9b) -- (10b); \draw[thick] (4b) -- (5b);
		\draw (5b) -- (6b); \draw (7b) -- (8b); \draw (7b) -- (4b);
	\end{tikzpicture}
	\]
	\caption{$\delta$ is not stable in $\mathcal{F}(d)$: $T'={{a\ b}\choose{c\ d}}T$,
		but $\delta(T')-\delta(T)=2$.}
	\label{contradiam}
\end{figure}

If $\xi$ is stable, recall that $\xi$ is stable in each connected induced subgraph $\mathcal{X}$ of $\mathcal{G}(d)$. Therefore, $\xi$ has the interval property in each connected induced subgraph of $\mathcal{G}(d)$, by \Cref{printparamgeneral}. In such a case, we simply say that $\xi$ has the interval property.

\begin{corollary}
	\label{stable_implies_int_prop}
	If $\xi$ is stable, then $\xi$ has the interval property.
\end{corollary}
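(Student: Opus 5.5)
The corollary is essentially a restatement of \Cref{printparamgeneral} combined with the definitions, so the plan is to unwind the definitions and make sure the hypotheses of \Cref{printparamgeneral} are met for every relevant $\mathcal{X}$.

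The first step is to fix the meaning of the conclusion. By the convention just introduced, ``$\xi$ has the interval property'' means that $\xi$ has the interval property in every connected induced subgraph $\mathcal{X}$ of $\mathcal{G}(d)$. So I would fix an arbitrary connected induced subgraph $\mathcal{X}$ of $\mathcal{G}(d)$. The examples of interest are $\mathcal{G}(d)$ itself (connected by \Cref{BergeTeo}), $\mathcal{K}(d)$ by \Cref{Teo:Tay}, $\mathcal{F}(d)$ by \Cref{IPTT}, $\mathcal{U}(d)$, and $\mathcal{P}(d)$ by \Cref{pseudoTransition}. I would then consider the restriction $\xi|_{V(\mathcal{X})}$.

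The second step is to check stability of this restriction in $\mathcal{X}$. Take $G\in V(\mathcal{X})$ and a 2-switch $\tau$ on $G$ with $\tau(G)\in V(\mathcal{X})$. Since $\mathcal{X}$ is an induced subgraph of $\mathcal{G}(d)$, both $G$ and $\tau(G)$ lie in $V(\mathcal{G}(d))$. Stability of $\xi$ in $\mathcal{G}(d)$ then gives $|\xi(\tau(G))-\xi(G)|\le 1$. Hence $\xi|_{V(\mathcal{X})}$ is stable in $\mathcal{X}$; this is the remark ``stable in $\mathcal{G}(d)$ implies stable in every connected induced subgraph'' made after the definition.

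The third step is to apply \Cref{printparamgeneral} to $\mathcal{X}$ and $\xi|_{V(\mathcal{X})}$. This yields that $\xi(V(\mathcal{X}))=I\cap\mathbb{Z}$ for some real interval $I$. Since $\mathcal{X}$ was arbitrary, $\xi$ has the interval property.

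There is no real obstacle here. The only points needing care are making explicit that the restriction of a stable parameter is stable, which uses only that $\mathcal{X}$ is induced, and that \Cref{printparamgeneral} genuinely uses connectedness of $\mathcal{X}$. That is exactly why the statement is quantified over connected induced subgraphs only, rather than arbitrary ones like the bipartite-induced subgraph of \Cref{sec_bipartite_nonbip}.
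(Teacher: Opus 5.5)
Your proposal is correct and matches the paper's argument: stability in $\mathcal{G}(d)$ restricts to stability in every connected induced subgraph $\mathcal{X}$, and \Cref{printparamgeneral} is then applied to each such $\mathcal{X}$. This is exactly what ``has the interval property'' means under the paper's convention.
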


\begin{proof}
	It follows from the previous discussion.
\end{proof}

The next lemma provides an easy way to prove that an integer parameter is stable in $\mathcal{X}$. We will use it several times in the next sub-sections.

\begin{lemma}\label{lemastable}
	Let $\mathcal{X}$ be a connected induced subgraph of $\mathcal{G}(d)$ and let $\xi:V(\mathcal{X})\rightarrow\mathbb{Z}$. Assume that one of the following inequalities holds for each graph $G\in V(\mathcal{X})$ and for every $2$-switch $\tau$ on $G$ such that $\tau(G)\in V(\mathcal{X})$:
	\begin{multicols}{2}
		\begin{enumerate}
			\item $\xi(\tau(G))\leq \xi(G)+1$;
			\item $\xi(\tau(G))\geq \xi(G)-1$.
		\end{enumerate}
	\end{multicols}
	Then, $\xi$ is stable in $\mathcal{X}$.
\end{lemma}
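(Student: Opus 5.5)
The plan is to exploit the fact that the inverse of a 2-switch is again a 2-switch. If $\tau={{a \ b}\choose{c \ d}}$ is a 2-switch on $G$, then $\tau(G)$ contains $ac,bd$ and does not contain $ab,cd$. Hence $\tau^{-1}={{a \ c}\choose{b \ d}}$ is a 2-switch on $\tau(G)$, and $\tau^{-1}(\tau(G))=G$. We first verify this small fact directly from the definition: the rows of $\tau^{-1}$ are edges of $\tau(G)$ and its columns are non-edges of $\tau(G)$.

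Assume hypothesis (1). Let $G\in V(\mathcal{X})$ and let $\tau$ be a 2-switch on $G$ with $\tau(G)\in V(\mathcal{X})$. Applying (1) to $G$ and $\tau$ gives the upper bound $\xi(\tau(G))\le \xi(G)+1$. For the lower bound, apply (1) to the graph $G'=\tau(G)\in V(\mathcal{X})$ and the 2-switch $\tau^{-1}$ on $G'$. This is allowed because $\tau^{-1}(G')=G\in V(\mathcal{X})$. We obtain $\xi(G)\le \xi(\tau(G))+1$, that is, $\xi(\tau(G))\ge \xi(G)-1$. Combining the two bounds gives $|\xi(\tau(G))-\xi(G)|\le 1$, which is stability in $\mathcal{X}$.

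If instead hypothesis (2) holds, the argument is symmetric. Applying (2) to $(G,\tau)$ gives the lower bound. Applying (2) to $(\tau(G),\tau^{-1})$ gives $\xi(G)\ge\xi(\tau(G))-1$, which is the upper bound.

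There is a subtlety in reading the statement. If ``one of the inequalities holds'' were meant pairwise, with a different inequality for different pairs $(G,\tau)$, the argument above would fail, since a single pair could satisfy only one bound. So we read the hypothesis as saying that one fixed inequality holds uniformly over all admissible pairs. The main step to get right is then the check that $\tau^{-1}$ is an admissible 2-switch whose source and target both lie in $V(\mathcal{X})$. That check is immediate because $\mathcal{X}$ is an induced subgraph of $\mathcal{G}(d)$ containing both $G$ and $\tau(G)$.
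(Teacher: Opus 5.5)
Your proof is correct and is essentially the paper's argument: apply the assumed inequality to $\tau(G)$ and the inverse switch $\tau^{-1}={{a \ c}\choose{b \ d}}$ to obtain the other inequality, then combine the two. Your explicit check that $\tau^{-1}$ is an admissible 2-switch with $\tau^{-1}(\tau(G))=G\in V(\mathcal{X})$, and your reading of the hypothesis as one fixed inequality holding for all pairs, are exactly what the paper's proof uses implicitly.
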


\begin{proof}
	Assume that (1) holds and apply (1) to the graph $H=\tau(G)$ and the 2-switch $\tau^{-1}$ that undoes $\tau$ (i.e., $\tau^{-1}(H)=G$). In this way, we obtain inequality (2). Finally, combining (1) and (2) yields $\left| \xi\left(\tau(G)\right)-\xi(G)\right|\leq 1$, which shows that $\xi$ is stable in $\mathcal{X}$. The proof is the same if we start assuming the inequality (2).
\end{proof}

\subsection{Matching number and related parameters}\label{sectionmatchingnumber}

A matching in a graph $G$ is a set of pairwise disjoint edges of $G$. The maximum size of a matching in $G$ is called the \emph{matching number} of $G$, which is denoted by $\mu(G)$. A matching in \(G\) with maximum size is called a maximum matching. A proof of the stability of $\mu$ under 2-switch can be found between the lines of \cite{BockRat}.
We include our proof for completeness.

\begin{lemma}
	\label{matchlem1}
	Let $M$ be a maximum matching in a graph $G$ and let $\tau$ be a 2-switch between $e_{1},e_{2}\in E(G)$. If $e_{1}$ and $e_{2}$ are both in $M$ or both in $E(G)-M$, then $\mu(G)\leq \mu(\tau (G))$.
\end{lemma}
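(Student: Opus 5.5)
The plan is to exhibit, in $\tau(G)$, a matching of size $|M|=\mu(G)$. Write $\tau={{a \ b}\choose{c \ d}}$, so $e_1=ab$, $e_2=cd$, and $\tau(G)=(G-\{ab,cd\})+\{ac,bd\}$. The 2-switch removes only $ab$ and $cd$, so every edge of $M$ other than these survives in $\tau(G)$. The argument therefore splits according to the two cases of the hypothesis.

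\emph{Case $e_1,e_2\in E(G)-M$.} No edge of $M$ is deleted by $\tau$. Hence $M\subseteq E(\tau(G))$, and $M$ is still a set of pairwise disjoint edges, that is, a matching of $\tau(G)$. This gives $\mu(\tau(G))\geq |M|=\mu(G)$.

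\emph{Case $e_1,e_2\in M$.} Consider $M'=(M-\{ab,cd\})\cup\{ac,bd\}$. The edges $ac$ and $bd$ belong to $\tau(G)$ and are disjoint, since $a,b,c,d$ are four distinct vertices. The vertices $a,b,c,d$ were covered in $M$ only by $ab$ and $cd$, so no other edge of $M-\{ab,cd\}$ meets $\{a,b,c,d\}$. Thus $M'$ is a matching of $\tau(G)$, and $|M'|=|M|$, so again $\mu(\tau(G))\geq\mu(G)$.

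There is no real obstacle here. The only point needing care is the disjointness check in the second case, which uses that $M$ is a matching covering $a,b,c,d$ exactly by $ab$ and $cd$. The mixed case, with one edge in $M$ and one outside, is excluded by hypothesis and will presumably be handled separately later, for instance by combining this result with \Cref{lemastable}.
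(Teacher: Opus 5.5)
Your proof is correct and matches the paper's argument. When $e_1,e_2\notin M$, the matching $M$ survives in $\tau(G)$; when $e_1,e_2\in M$, you replace $ab,cd$ by $ac,bd$, using that no other edge of $M$ meets $\{a,b,c,d\}$, which is exactly what the paper does.
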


\begin{proof}
If $e_{1},e_{2}\in E(G)-M$, then $M$ is also a matching in $\tau (G)$.  Hence, $|M|=\mu(G)\leq \mu(\tau (G))$.

If $e_{1},e_{2}\in M$, the set $M'=M-\{e_{1},e_{2}\}$ is a matching of size $\mu(G)-2$ in $\tau (G)=(G-\{e_{1},e_{2}\}) + \{e'_{1},e'_{2}\}$, where \(e'_{1}\) and \(e'_{2}\) are the edges that \(\tau\)  adds to \(G\). Notice that none of the four vertices involved in $\tau$ belongs to any edge of $M'$. Hence, \(M'\cup \{e'_{1},e'_{2}\}\) is a matching of \(\tau (G)\).  Therefore, $\mu(\tau(G))\geq |M'\cup \{e'_{1},e'_{2}\}|=(\mu(G)-2)+2=\mu(G)$.
\end{proof}

\begin{lemma}
	\label{matchlem2}
	 Let $M$ be a maximum matching in a graph $G$, and $\tau$ be a 2-switch between $e_{1},e_{2}\in E(G)$. If $e_{1}\in M$ and $e_{2}\notin M$, then $\mu(\tau(G))\geq\mu(G)-1$.
\end{lemma}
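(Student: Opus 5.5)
The plan is to exhibit, directly from $M$, a matching of size $\mu(G)-1$ in $\tau(G)$. Write $\tau={{a \ b}\choose{c \ d}}$, so that $\{e_1,e_2\}=\{ab,cd\}$ and $\tau(G)=(G-\{ab,cd\})+\{ac,bd\}$. Since the roles of the two removed edges are symmetric in the definition, we may assume without loss of generality that $e_1=ab\in M$ and $e_2=cd\notin M$; relabelling $a\leftrightarrow b$, $c\leftrightarrow d$ simultaneously if needed keeps the added edges $ac,bd$ of the same form, so nothing is lost.

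The key step is to consider $M'=M-\{ab\}$. This set has size $\mu(G)-1$. Every edge of $M'$ is different from $ab$, and also from $cd$ because $cd\notin M$. Hence every edge of $M'$ survives in $\tau(G)$, since the only edges deleted by $\tau$ are $ab$ and $cd$. So $M'$ is a matching of $\tau(G)$, and therefore $\mu(\tau(G))\geq |M'|=\mu(G)-1$, which is exactly the claimed inequality.

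There is essentially no obstacle here; the only point needing care is checking that no edge of $M'$ is among the removed edges, which is immediate from $e_2\notin M$. If one wanted a slightly stronger statement, one could try to add one of $ac$ or $bd$ to $M'$ when $c$ or $d$ is unsaturated by $M'$. This is not needed for the lemma, and combined with \Cref{matchlem1} and \Cref{lemastable} the weak bound already gives stability of $\mu$.
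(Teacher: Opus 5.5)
Your proof is correct and is exactly the paper's argument: $M-e_1$ avoids both removed edges (it excludes $e_1$ by construction and $e_2$ because $e_2\notin M$), so it is a matching of $\tau(G)$ of size $\mu(G)-1$. The WLOG step is unnecessary. Also, the relabelling that actually swaps the two removed edges is $a\leftrightarrow c$, $b\leftrightarrow d$, not $a\leftrightarrow b$, $c\leftrightarrow d$, but this does not affect the argument.
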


\begin{proof}
The set $M-e_{1}$ is a matching in $\tau(G)$ of size $\mu(G)-1$. Thus, $\mu(\tau(G))\geq \mu(G)-1$.
\end{proof}

\begin{theorem}
	\label{matching_stable}
	The matching number is stable.
\end{theorem}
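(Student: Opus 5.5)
The plan is to verify the hypothesis of \Cref{lemastable} with $\mathcal{X}=\mathcal{G}(d)$, using inequality (2): for every graph $G$ and every 2-switch $\tau$ on $G$ we aim to show $\mu(\tau(G))\geq \mu(G)-1$. Then \Cref{lemastable} gives $|\mu(\tau(G))-\mu(G)|\leq 1$, because the inverse 2-switch $\tau^{-1}$ supplies the other inequality. Since $\mathcal{G}(d)$ is connected by \Cref{BergeTeo}, it is a legitimate choice of $\mathcal{X}$, and stability in $\mathcal{G}(d)$ is exactly the definition of ``stable''.

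To establish inequality (2), I would fix $G$, a 2-switch $\tau$ between the edges $e_1,e_2\in E(G)$, and a maximum matching $M$ of $G$. The proof then splits according to how $\{e_1,e_2\}$ meets $M$.

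\begin{itemize}
\item If $e_1,e_2$ are both in $M$, or both outside $M$, then \Cref{matchlem1} gives $\mu(\tau(G))\geq\mu(G)\geq \mu(G)-1$.
\item If exactly one of them lies in $M$, we relabel so that $e_1\in M$ and $e_2\notin M$. Then \Cref{matchlem2} gives $\mu(\tau(G))\geq \mu(G)-1$ directly.
\end{itemize}

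These cases are exhaustive, so inequality (2) holds for every $G$ and every $\tau$.

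There is essentially no obstacle beyond bookkeeping. The one point to check is that the relabeling in the mixed case is harmless. It is, because \Cref{matchlem2} is stated symmetrically for an arbitrary 2-switch between two edges, one of which lies in $M$. Once the case analysis is done, the conclusion follows immediately from \Cref{lemastable}.
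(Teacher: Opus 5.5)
Your proposal is correct and follows the paper's own proof. The paper also derives $\mu(\tau(G))\geq\mu(G)-1$ from \Cref{matchlem1} and \Cref{matchlem2} and then applies \Cref{lemastable}; your explicit case split and the remarks on relabeling and the connectedness of $\mathcal{G}(d)$ only spell out steps the paper leaves implicit.
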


\begin{proof}
Let $G$ be a graph and $\tau$ be a 2-switch on $G$. By \Cref{matchlem1} and \Cref{matchlem2}, we have $\mu(\tau(G))\geq \mu(G)-1$. Thus, $\mu$ is stable by \Cref{lemastable}.
\end{proof}

Let $G$ be a graph. An edge-cover of $G$ is a set of edges $\mathcal{E}$ such that every vertex of $G$ is incident to at least one edge of $\mathcal{E}$. A minimum edge-cover of $G$ is an edge-cover of $G$ of minimum size. The \emph{edge-covering number} of $G$, denoted by $\epsilon(G)$, is the size of a minimum edge-cover of $G$.

\begin{corollary}
	\label{edgecover_stable}
	The edge-covering number is stable.
\end{corollary}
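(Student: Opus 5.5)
The plan is to derive this from \Cref{matching_stable} via Gallai's identity, which relates the edge-covering number to the matching number: for every graph $G$ without isolated vertices, $\epsilon(G)=|V(G)|-\mu(G)$. The degree sequence is invariant under 2-switches, so the property ``$G$ has no isolated vertices'' holds either for every member of $V(\mathcal{G}(d))$ or for none. If $d$ contains a zero, no edge-cover exists for any graph in $V(\mathcal{G}(d))$. In that case $\epsilon$ is undefined (or conventionally $\infty$) on the whole family, and the statement is understood for sequences without zeros, where $\epsilon$ is a genuine integer parameter. Assume therefore that $d$ has no zero entries.

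Next, fix $G\in V(\mathcal{G}(d))$ and a 2-switch $\tau$ on $G$. Both $G$ and $\tau(G)$ have $n=|V(G)|$ vertices and no isolated vertices, so Gallai's identity gives
\[
\epsilon(\tau(G))-\epsilon(G)=\mu(G)-\mu(\tau(G)).
\]
By \Cref{matching_stable}, the right-hand side has absolute value at most $1$. Hence $|\epsilon(\tau(G))-\epsilon(G)|\leq 1$, which is stability. Alternatively, one inequality suffices via \Cref{lemastable}.

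The only real content is Gallai's identity. If it should be justified rather than cited, I would argue as follows. Extending a maximum matching by one edge per uncovered vertex gives an edge-cover of size $\mu+(n-2\mu)=n-\mu$. Conversely, a minimum edge-cover is a forest of stars. If it has $k$ components, it has $n-k$ edges, and choosing one edge per star yields a matching of size $k$, so $\mu\geq n-\epsilon$. The main obstacle is only the bookkeeping about isolated vertices, which is handled by the degree-sequence invariance noted above.
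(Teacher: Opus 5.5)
Your proposal is correct and follows the paper's proof exactly: combine Gallai's identity $\epsilon(G)=|V(G)|-\mu(G)$ with the stability of the matching number (\Cref{matching_stable}). You also make explicit two points the paper leaves implicit: the identity requires $d$ to have no zero entries, which is invariant under 2-switches, and Gallai's identity itself can be justified in a self-contained way.
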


\begin{proof}
	It follows from this fact: $\epsilon(G)=|V(G)|-\mu(G)$ (see \cite{Gallai}).
\end{proof}

\begin{theorem}
	The matching number and the edge-covering number have the interval property.
\end{theorem}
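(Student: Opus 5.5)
The plan is to combine the stability results just obtained with the general principle that stability implies the interval property. By \Cref{matching_stable}, the matching number $\mu$ is stable, i.e., stable in $\mathcal{G}(d)$ for every graphical sequence $d$. By \Cref{edgecover_stable}, the edge-covering number $\epsilon$ is stable as well. Hence \Cref{stable_implies_int_prop} applies directly to both parameters. It follows that $\mu$ and $\epsilon$ have the interval property in every connected induced subgraph $\mathcal{X}$ of $\mathcal{G}(d)$.

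Concretely, I would recall why \Cref{stable_implies_int_prop} is applicable here. Stability in $\mathcal{G}(d)$ means that $|\xi(\tau(G))-\xi(G)|\le 1$ for every 2-switch $\tau$ on any graph $G$. This inequality does not depend on which induced subgraph we are working in, so it is inherited by every connected induced subgraph. Then \Cref{printparamgeneral} gives the interval property there. In particular, I would list the concrete families covered so far:
\begin{itemize}
\item $\mathcal{G}(d)$ itself, which is connected by \Cref{BergeTeo};
\item $\mathcal{K}(d)$, by \Cref{Teo:Tay};
\item $\mathcal{F}(d)$, by \Cref{IPTT};
\item $\mathcal{U}(d)$;
\item $\mathcal{P}(d)$, by \Cref{pseudoTransition}.
\end{itemize}
For each of these families, every integer between the minimum and the maximum value of $\mu$ (respectively $\epsilon$) is attained.

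For $\epsilon$, one small point should be checked. The Gallai identity $\epsilon(G)=|V(G)|-\mu(G)$ requires $G$ to have no isolated vertices; otherwise no edge cover exists. Within $\mathcal{G}(d)$, however, isolated vertices are determined by $d$, so either every member of $\mathcal{G}(d)$ has an isolated vertex or none does. We simply work with the sequences $d$ that have no zero entries. In that case $|V(G)|$ is constant on $\mathcal{G}(d)$. The interval property for $\epsilon$ then also follows directly from the one for $\mu$, since $\epsilon$ is obtained from $\mu$ by the affine map $k\mapsto n-k$.

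I do not expect a genuine obstacle. The only delicate point is the isolated-vertex caveat for $\epsilon$ just described; everything else is a direct invocation of earlier results.
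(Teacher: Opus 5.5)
Your proposal is correct and matches the paper's proof, which simply combines \Cref{matching_stable} and \Cref{edgecover_stable} with \Cref{stable_implies_int_prop}. Your remark that the Gallai identity $\epsilon(G)=|V(G)|-\mu(G)$ requires $d$ to have no zero entries is sensible extra care that the paper leaves implicit.
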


\begin{proof}
	It follows from \Cref{matching_stable} and \Cref{edgecover_stable,stable_implies_int_prop}.
\end{proof}

The \emph{rank} and \emph{nullity} of a graph $G$, denoted by $\rank(G)$ and $\mnull(G)$ respectively, are the rank and nullity of its adjacency matrix. It is known that $\rank(F)=2\mu(F)$, for any forest $F$ (see \cite{Bevis,Gutman}). Combining this fact with the rank-nullity theorem from linear algebra (i.e., $\rank(G)+\mnull(G)=|V(G)|$) and the interval property of $\mu$ in $\mathcal{F}(d)$, we get the following result.

\begin{corollary}\label{rank_null}
	Let $F$ be a forest and let \(\tau\) be an f-switch on \(F\). Then, \[|\rank(\tau(F))-\rank(F)|=|\mnull(\tau(F))-\mnull(F)|\in \{0,2\}.\]
\end{corollary}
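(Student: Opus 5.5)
The plan is to reduce everything to the matching number. Since $F$ is a forest and $\tau$ is an f-switch, $\tau(F)$ is again a forest on the same vertex set. The cited identity $\rank(F)=2\mu(F)$, valid for every forest, then applies to both $F$ and $\tau(F)$. It gives
\[
\rank(\tau(F))-\rank(F)=2\bigl(\mu(\tau(F))-\mu(F)\bigr).
\]
By \Cref{matching_stable}, $\mu$ is stable under every 2-switch, and in particular under $\tau$. Hence $\mu(\tau(F))-\mu(F)\in\{-1,0,1\}$, and so $|\rank(\tau(F))-\rank(F)|\in\{0,2\}$.

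For the nullity, I would use the rank-nullity theorem for the adjacency matrices of $F$ and $\tau(F)$. Both are $n\times n$ with $n=|V(F)|=|V(\tau(F))|$, because a 2-switch does not change the vertex set. Therefore
\[
\mnull(\tau(F))-\mnull(F)=-\bigl(\rank(\tau(F))-\rank(F)\bigr),
\]
so the two absolute differences are equal, which completes the chain of equalities in the statement.

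There is no real obstacle. The only point needing care is that the rank formula $\rank=2\mu$ holds only for forests, so the hypothesis that $\tau$ is an f-switch is exactly what allows it to be applied to $\tau(F)$. The paper's remark about the interval property of $\mu$ in $\mathcal{F}(d)$ is not needed; stability of $\mu$ alone suffices.
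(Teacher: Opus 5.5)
Your proposal is correct and is essentially the paper's argument, which likewise combines $\rank(F)=2\mu(F)$ for forests, the rank-nullity theorem, and how $\mu$ changes under an f-switch. You are also right that the ingredient actually needed is the stability of $\mu$ (\Cref{matching_stable}), not the interval property of $\mu$ in $\mathcal{F}(d)$ that the paper's preceding remark cites; the interval property alone would not bound the change caused by a single switch.
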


\begin{proof}
	It follows from the previous discussion.
\end{proof}

\subsection{Independence number and related parameters}\label{sectionindependencenumber}

An independent set of a graph $G$ is a set of vertices in $G$, no two of which are adjacent. A maximum independent set in $G$ is an independent set of \(G\) with the largest possible cardinality. This cardinality is called the \emph{independence number} of $G$, and it is denoted by $\alpha(G)$.

\begin{theorem}\label{indep_stable}
	The independence number is stable.
\end{theorem}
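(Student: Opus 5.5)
Following the template of \Cref{matching_stable}, I will establish a single one-sided inequality and then invoke \Cref{lemastable}. The target is
\[
\alpha(\tau(G))\geq \alpha(G)-1
\]
for every graph $G$ and every 2-switch $\tau={{a \ b}\choose{c \ d}}$ on $G$. Since $\mathcal{G}(d)$ is a connected induced subgraph of itself, \Cref{lemastable} then gives stability of $\alpha$ in $\mathcal{G}(d)$, and hence stability in the paper's sense.

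To prove the inequality, fix a maximum independent set $S$ of $G$, so $|S|=\alpha(G)$. The 2-switch deletes $ab$ and $cd$ and adds only $ac$ and $bd$. Deleting edges cannot destroy independence, so $S$ can fail to be independent in $\tau(G)$ only if $\{a,c\}\subseteq S$ or $\{b,d\}\subseteq S$.

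I then split into cases.
\begin{enumerate}
\item If neither pair lies in $S$, then $S$ is independent in $\tau(G)$, and $\alpha(\tau(G))\ge\alpha(G)$.
\item If $\{a,c\}\subseteq S$, then $b\notin S$ because $ab\in E(G)$, and likewise $d\notin S$ because $cd\in E(G)$. So $\{b,d\}\not\subseteq S$, and only the edge $ac$ causes trouble. Removing $a$ from $S$ gives an independent set of $\tau(G)$ of size $\alpha(G)-1$.
\item The case $\{b,d\}\subseteq S$ is symmetric: then $a,c\notin S$, and removing $b$ suffices.
\end{enumerate}
The two bad pairs cannot both lie in $S$, because $a,b$ are adjacent in $G$. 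This observation is the only point needing care, and I regard it as the main (mild) obstacle: I must check that at most one new edge lies inside $S$, so that a single vertex deletion is enough.

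Combining the cases gives $\alpha(\tau(G))\ge\alpha(G)-1$, and \Cref{lemastable} finishes the proof.
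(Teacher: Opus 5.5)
Your proof is correct and is essentially the paper's argument. The paper also fixes a maximum independent set, notes that it meets $\{a,b,c,d\}$ in at most two vertices, and deletes $a$ (respectively $b$) when that intersection is $\{a,c\}$ (respectively $\{b,d\}$). It then concludes $\alpha(\tau(G))\geq\alpha(G)-1$ and applies \Cref{lemastable}, exactly as you do.
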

\begin{proof}
Let $I$ be a maximum independent set in a graph $G$, $\tau={{a \ b}\choose{c \ d}}$ be a 2-switch on \(G\) and let $V_{\tau}=\{a,b,c,d\}$. Notice that $|I\cap V_{\tau}|\leq 2$.

If \(\left| I \cap  V_{\tau} \right| \leq 1\), then \(\alpha(\tau(G)) \geq |I|=\alpha(G)\),  because $I$ is an independent set in $\tau(G)$. We can easily conclude the same when $I\cap V_{\tau}=\{a,d\}$ or $I\cap V_{\tau}=\{b,c\}$.

If $I\cap V_{\tau}=\{a,c\}$, notice that $I$ is not an independent set in $\tau(G)$, since  $ac\in E(\tau(G))$. Thus, $I-a$ is an independent set in $\tau(G)$, and so $\alpha(\tau(G))\geq |I-a|=\alpha(G)-1$. The same argument holds if $I\cap V_{\tau}=\{b,d\}$.

In any case, $\alpha(\tau(G))\geq \alpha(G)-1$. Thus, $\alpha$ is stable by \Cref{lemastable}.
\end{proof}

A vertex-cover of a graph $G$ is a set of vertices $U\subseteq V(G)$ such that each edge of $G$ is incident to at least one vertex of the set $U$. A minimum vertex-cover of $G$ is a vertex cover of $G$ of minimum size. The \emph{vertex-covering number} of $G$, denoted by $\nu(G)$, is the size of a minimum vertex-cover of $G$.

\begin{corollary}
	\label{vertex_cover_stable}
	The vertex-covering number is stable.
\end{corollary}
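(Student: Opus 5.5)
The plan is to mirror the proof of \Cref{edgecover_stable} and derive the stability of $\nu$ from that of $\alpha$ through a Gallai-type identity. The key fact is the classical complementarity between independent sets and vertex-covers. A set $U\subseteq V(G)$ is a vertex-cover of $G$ if and only if $V(G)-U$ is an independent set of $G$. Indeed, $U$ fails to cover some edge $xy$ exactly when both $x,y\in V(G)-U$, which means $V(G)-U$ is not independent. Complementation is a bijection on subsets of $V(G)$, so minimizing $|U|$ over vertex-covers is the same as maximizing $|V(G)-U|$ over independent sets. This yields
\[
\nu(G)=|V(G)|-\alpha(G)
\]
for every graph $G$, which is one of Gallai's identities (see \cite{Gallai}).

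With the identity in hand, let $\tau$ be any 2-switch on $G$. Since $\tau$ does not change the vertex set, $|V(\tau(G))|=|V(G)|$. Hence
\[
|\nu(\tau(G))-\nu(G)|=|\alpha(G)-\alpha(\tau(G))|\leq 1,
\]
by \Cref{indep_stable}. This is exactly the stability of $\nu$. Alternatively, one can check only the one-sided inequality $\nu(\tau(G))\leq\nu(G)+1$ and then invoke \Cref{lemastable}, but the two-sided bound comes out directly anyway.

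I do not expect a real obstacle here. The only point needing care is stating the complement correspondence correctly in both directions, so that the extremal values match up; everything else is immediate from \Cref{indep_stable}.
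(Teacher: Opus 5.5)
Your proposal is correct and follows the paper's proof: the paper also derives stability of $\nu$ from the Gallai identity $\nu(G)=|V(G)|-\alpha(G)$ together with \Cref{indep_stable}. Your complementation argument (a set is a vertex-cover exactly when its complement is independent) proves this identity directly, where the paper only cites it.
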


\begin{proof}
	It follows from this fact: $\nu(G)=n-\alpha(G)$ (see \cite{Gallai}).
\end{proof}

Let $G$ be a graph. A clique is a subset of $V(G)$ that induces a complete subgraph of $G$. The \emph{clique number} of $G$, denoted by $\omega(G)$, is the maximum size of a clique in $G$. Recall that $\omega(G)=\alpha(\overline{G})$ (see Chapter 5 of \cite{Diestel2005}).

\begin{theorem}
	\label{clique_stable}
	The clique number is stable.
\end{theorem}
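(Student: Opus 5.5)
The plan is to reduce the statement to \Cref{indep_stable} by passing to complements, using the identity $\omega(G)=\alpha(\overline{G})$ recalled just before the statement. The key observation is that a 2-switch on $G$ induces a 2-switch on $\overline{G}$. Let $\tau={{a \ b}\choose{c \ d}}$ be a 2-switch on $G$, so $ab,cd\in E(G)$ and $ac,bd\notin E(G)$. Then in $\overline{G}$ we have $ac,bd\in E(\overline{G})$ and $ab,cd\notin E(\overline{G})$. Hence $\sigma={{a \ c}\choose{b \ d}}$ is a 2-switch on $\overline{G}$: it removes $ac$ and $bd$ and adds $ab$ and $cd$.

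Next I would check that $\overline{\tau(G)}=\sigma(\overline{G})$. The only vertex pairs whose adjacency status changes are $ab,cd,ac,bd$. In $\tau(G)$ the pairs $ab,cd$ become non-edges and $ac,bd$ become edges. Therefore in $\overline{\tau(G)}$ the pairs $ab,cd$ are edges and $ac,bd$ are non-edges, while every other pair agrees with $\overline{G}$. This is exactly $\sigma(\overline{G})$.

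With that in hand, \Cref{indep_stable} applied to $\overline{G}$ and $\sigma$ gives
\[
|\omega(\tau(G))-\omega(G)|=|\alpha(\sigma(\overline{G}))-\alpha(\overline{G})|\leq 1 .
\]
This holds for every graph $G$ and every 2-switch $\tau$ on $G$, so $\omega$ is stable. Alternatively, one could use only the one-sided inequality $\alpha(\sigma(\overline G))\geq\alpha(\overline G)-1$ and conclude with \Cref{lemastable}.

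There is no real obstacle. The only care needed is the bookkeeping in the complement, namely verifying that the roles of added and removed edges swap correctly so that $\sigma$ is a legitimate 2-switch on four distinct vertices.
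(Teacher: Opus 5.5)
Your proposal is correct and follows essentially the same route as the paper. The paper also uses $\omega(G)=\alpha(\overline{G})$, notes that $\overline{\tau(G)}$ is obtained from $\overline{G}$ by the 2-switch ${{a \ c}\choose{b \ d}}$, and appeals to the stability of $\alpha$ (\Cref{indep_stable}). The differences are only in presentation: the paper argues by contradiction, assuming a jump of at least $2$, whereas you argue directly and also write out the check that $\overline{\tau(G)}=\sigma(\overline{G})$, which the paper leaves implicit.
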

\begin{proof}
	If $G$ is a graph and $d=d(G)$, let $\overline{d}$ be the degree sequence of the complement $\overline{G}$ of $G$. Assume that $\omega$ is not stable in $\mathcal{G}(d)$. Then, there exist a graph $H\in V(\mathcal{G}(d))$ and a 2-switch $\tau={{a \ b}\choose{c \ d}}$ on $H$ such that $\omega(\tau(H))-\omega(H)\geq 2$. Hence, $\alpha(\overline{\tau(H)})-\alpha(\overline{H})\geq 2$. Now, observe that $\overline{\tau(H)}=\tau^{-1}(\overline{H})$, where $\tau^{-1}={{a \ c}\choose{b \ d}}$. But, this means that
	\[ \alpha(\tau^{-1}(\overline{H}))-\alpha(\overline{H})\geq 2, \]
	which contradicts the stability of $\alpha$ in $\mathcal{G}(\overline{d})$ (\Cref{indep_stable}).
\end{proof}

\begin{theorem}
	Independence number, vertex-covering number and clique number have the interval property.
\end{theorem}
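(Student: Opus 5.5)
The proof will simply combine the stability results of this subsection with \Cref{stable_implies_int_prop}. By \Cref{indep_stable}, the independence number $\alpha$ is stable, and by \Cref{vertex_cover_stable} and \Cref{clique_stable} the vertex-covering number $\nu$ and the clique number $\omega$ are stable as well. Here stable means stable in $\mathcal{G}(d)$ for every graphical sequence $d$.

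The plan is to note that stability in $\mathcal{G}(d)$ passes to every connected induced subgraph $\mathcal{X}$ of $\mathcal{G}(d)$. This holds because the defining inequality $|\xi(\tau(G))-\xi(G)|\leq 1$ for 2-switches staying inside $\mathcal{X}$ is a special case of the unrestricted one. Then \Cref{printparamgeneral} gives the interval property in each such $\mathcal{X}$, and in particular in $\mathcal{G}(d)$, $\mathcal{F}(d)$, $\mathcal{U}(d)$ and $\mathcal{P}(d)$, which are connected by \Cref{BergeTeo}, \Cref{IPTT}, \Cref{Teo:Tay} and \Cref{pseudoTransition}. This is exactly the content of \Cref{stable_implies_int_prop}, so it suffices to invoke it three times.

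There is essentially no obstacle. The only point to check is that \Cref{vertex_cover_stable} was derived correctly from $\nu(G)=n-\alpha(G)$, since $n$ is constant on $\mathcal{G}(d)$. Likewise, \Cref{clique_stable} relies on complementation taking $\mathcal{G}(d)$ into $\mathcal{G}(\overline{d})$ and 2-switches to 2-switches, which was already handled in its proof.
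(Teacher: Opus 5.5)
Your proposal is correct and follows essentially the same route as the paper. The paper's proof likewise just cites the stability of $\alpha$, $\nu$ and $\omega$ (\Cref{indep_stable}, \Cref{vertex_cover_stable}, \Cref{clique_stable}) and then applies \Cref{stable_implies_int_prop}.
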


\begin{proof}
	It follows from \Cref{indep_stable,clique_stable}, and \Cref{vertex_cover_stable,stable_implies_int_prop}.
\end{proof}

\subsection{Domination number}\label{sectiondominationnumber}

A dominating set of a graph $G$ is a set $D$ of vertices such that every vertex of $G$ not in $D$ is adjacent to at least one element of $D$. Under this condition, we say that $D$ dominates (or covers) a vertex $v$, if $v$ is adjacent to some vertex of $D$ or if $v\in D$. A minimum dominating set is a dominating set of minimum size. The \emph{domination number} of $G$, denoted by $\gamma(G)$, is the size of a minimum dominating set of $G$.

\begin{theorem}
	\label{domin_stable}
	The domination number is stable.
\end{theorem}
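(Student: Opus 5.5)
By \Cref{lemastable} it suffices to prove one of the two one-sided inequalities. We will prove
\[
\gamma(\tau(G))\leq \gamma(G)+1
\]
for every graph $G$ and every 2-switch $\tau={{a \ b}\choose{c \ d}}$ on $G$. The plan is to start from a minimum dominating set $D$ of $G$ and show that adding at most one vertex yields a dominating set of $\tau(G)$.

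The first step is to locate which vertices can lose domination. Passing from $G$ to $\tau(G)$ only removes the edges $ab$ and $cd$. Hence a vertex dominated by $D$ in $G$ can fail to be dominated in $\tau(G)$ only if it belongs to $\{a,b,c,d\}$ and was dominated solely through a removed edge. Concretely:
\begin{itemize}
\item $a$ can become undominated only if $a\notin D$ and $b$ is its unique neighbour in $D$;
\item $b$ likewise depends only on $a$;
\item $c$ depends only on $d$;
\item $d$ depends only on $c$.
\end{itemize}
Since $D$ is a dominating set in $G$, every vertex outside $\{a,b,c,d\}$ remains dominated by $D$ in $\tau(G)$, because its edges are unchanged.

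The second step is a case analysis on $D\cap\{a,b,c,d\}$. We use the new edges $ac$ and $bd$, which pair the endpoints across the two removed edges.

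\begin{itemize}
\item \textbf{$D$ meets $ab$ but not $cd$.} Suppose $a\in D$ and $b\notin D$. Then $b$ may be undominated in $\tau(G)$. Among $c,d$, neither is in $D$, so neither relied on $cd$ and both stay dominated. Adding $b$ to $D$ fixes everything. The case $b\in D$, $a\notin D$ is symmetric, and so is the case where $D$ meets $cd$ but not $ab$.
\item \textbf{$D$ meets both $ab$ and $cd$.} Suppose $a\in D$ and $c\in D$. Then only $b$ and $d$ may be in danger. Adding $b$ to $D$ dominates $b$ itself, and dominates $d$ via the new edge $bd$. The other pairings are handled the same way: if $a,d\in D$, then $b$ is dominated by the new edge $bd$ and $c$ by the new edge $ac$, so nothing needs to be added.
\item \textbf{$D$ contains both endpoints of an edge.} If $\{a,b\}\subseteq D$, those two vertices are self-dominated, and we are reduced to the previous subcases for $c,d$.
\item \textbf{$D\cap\{a,b,c,d\}=\varnothing$.} No vertex relied on a removed edge, so $D$ itself still dominates $\tau(G)$.
\end{itemize}

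The main obstacle is the bookkeeping: we must make sure that a single added vertex always suffices. The critical configuration is when two of the four vertices are simultaneously at risk. Because the endpoints lying in $D$ are adjacent via the removed edges to the at-risk vertices, the at-risk vertices are always of the form $\{b,d\}$ or $\{a,c\}$, in the sense of being ``opposite'' a $D$-vertex. Each such pair is joined by a new edge, so adding one of them suffices. I will present the case analysis compactly by symmetry, using the symmetries $a\leftrightarrow d,\ b\leftrightarrow c$ and $a\leftrightarrow b,\ c\leftrightarrow d$, both of which preserve the switch. Then $|D'|\le\gamma(G)+1$ gives the inequality, and \Cref{lemastable} gives stability.
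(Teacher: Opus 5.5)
Your proof is correct and follows essentially the same route as the paper: take a minimum dominating set $D$ of $G$, note that only vertices of $\{a,b,c,d\}$ can lose domination, use the new edges $ac$ and $bd$ to show that one extra vertex suffices, and then invoke \Cref{lemastable}. The paper avoids your case split on $D\cap\{a,b,c,d\}$ by fixing an undominated vertex, say $a$, which forces $b\in D$; then $D\cup\{a\}$ works, since $a$ covers $c$ via $ac$ and $b$ covers $d$ via $bd$.
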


\begin{proof}
	Let $G$ be a graph of order $\geq 4$, $D$ a minimum dominating set of $G$, and $\tau={{a \ b}\choose{c \ d}}$ a 2-switch on \(G\). If $D$ is a dominating set in $\tau(G)$, then $\gamma(\tau(G))\leq |D|=\gamma(G)\leq \gamma(G)+1$.

	Assume $D$ is not a dominating set in $\tau(G)$. As the edges incident to vertices not in $\{a,b,c,d\}$ in $G$ and in $\tau(G)$ are the same, $D$ dominates every vertex in $V(G)-\{a,b,c,d\}$. Hence at least one vertex in $\{a,b,c,d\}$ is not dominated by \(D\) in $\tau(G)$. Without loss of generality, assume that such a vertex is $a$ and consider its neighbors in \(G\) and \(\tau(G)\). Since the only edge incident to $a$ in $G$ that is not in $\tau(G)$ is $ab$, $b$ must be in $D$. Therefore, $d$ is dominated in $\tau(G)$ by $b$. Moreover, $D\cup a$ is a dominating set in $\tau(G)$, because $c$ is dominated by $a$ in $\tau(G)$. Thus, $\gamma(\tau(G))\leq |D\cup a|=\gamma(G)+1$.

	As in either case $\gamma(\tau(G))\leq \gamma(G)+1$, \Cref{lemastable} implies that $\gamma$ is stable.
\end{proof}

\begin{theorem}
	The domination number has the interval property.
\end{theorem}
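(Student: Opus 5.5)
The statement follows the same pattern the paper uses for the matching number and the independence number. We first note that \Cref{domin_stable} says $\gamma$ is stable in $\mathcal{G}(d)$ for every graphical sequence $d$. We then invoke \Cref{stable_implies_int_prop}. Concretely, stability in $\mathcal{G}(d)$ passes to every connected induced subgraph $\mathcal{X}$ of $\mathcal{G}(d)$. This is because the defining inequality $|\gamma(\tau(G))-\gamma(G)|\le 1$ is then only required for the subset of 2-switches $\tau$ with $\tau(G)\in V(\mathcal{X})$. Applying \Cref{printparamgeneral} to each such $\mathcal{X}$ gives the interval property there.

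For the reader's benefit, I would also spell out the concrete families this covers, since that is the real content. $\mathcal{G}(d)$ itself is connected by \Cref{BergeTeo}, and $\mathcal{K}(d)$ is connected by \Cref{Teo:Tay}. $\mathcal{F}(d)$ is connected by \Cref{IPTT}, and $\mathcal{P}(d)$ by \Cref{pseudoTransition}. $\mathcal{U}(d)$ is connected whenever it is nonempty, by \Cref{Teo:Tay} as observed in the introduction. Hence $\gamma$ attains every integer between its minimum and maximum on graphs, connected graphs, forests (in particular trees), unicyclic graphs and pseudoforests with a given degree sequence.

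The only point needing care is the hypothesis ``order $\geq 4$'' in the proof of \Cref{domin_stable}. It is harmless: a 2-switch requires four distinct vertices, so on graphs of order $<4$ no 2-switch exists. In that case $\mathcal{G}(d)$ has a single vertex and the interval property is trivial. I expect no real obstacle here; the theorem is essentially a corollary of \Cref{domin_stable} and \Cref{stable_implies_int_prop}.
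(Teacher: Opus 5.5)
Your proposal is correct and matches the paper's proof, which simply combines the stability of $\gamma$ (\Cref{domin_stable}) with \Cref{stable_implies_int_prop}. Your remarks on the concrete connected families ($\mathcal{G}(d)$, $\mathcal{K}(d)$, $\mathcal{F}(d)$, $\mathcal{U}(d)$, $\mathcal{P}(d)$) and on the harmless ``order $\geq 4$'' hypothesis are accurate, but they are not needed for the argument.
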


\begin{proof}
	It follows from \Cref{domin_stable} and \Cref{stable_implies_int_prop}.
\end{proof}

\subsection{Number of connected components}\label{sectionnumberofcomponents}

Previously, we said that two forests with the same degree sequence $d$ must have the same number of connected components. We can rephrase this by saying that $\kappa$ is constant on $V(\mathcal{F}(d))$, and therefore, it (trivially) has the interval property in $\mathcal{F}(d)$. We will prove that the same occurs in $\mathcal{G}(d)$.

\begin{theorem}\label{kappa_stable}
The number of connected components is stable.
\end{theorem}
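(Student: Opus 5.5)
By \Cref{lemastable}, it suffices to prove the one-sided inequality $\kappa(\tau(G))\geq \kappa(G)-1$ for every graph $G$ and every 2-switch $\tau={{a \ b}\choose{c \ d}}$ on $G$. The plan is to track the components of $G$ through the two steps of the switch. First delete the edges $ab$ and $cd$, then add the edges $ac$ and $bd$.

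\emph{Deletion step.} Deleting an edge never decreases the number of components, so
\[
\kappa(G-\{ab,cd\})\geq \kappa(G).
\]

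\emph{Addition step.} Adding an edge lowers the number of components by at most one, so adding $ac$ and $bd$ gives only the crude bound
\[
\kappa(\tau(G))\geq \kappa(G)-2.
\]
The main obstacle is to rule out the case where both added edges merge components. I would argue directly about the components of $G$ as follows.

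Let $A$ be the component of $G$ containing $a$ and $b$, and let $C$ be the component containing $c$ and $d$; possibly $A=C$. Every other component of $G$ is untouched by $\tau$ and remains a component of $\tau(G)$. Hence it suffices to show that the subgraph of $\tau(G)$ spanned by $V(A)\cup V(C)$ has at least $|\{A,C\}|-1$ components.

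If $A=C$, this lower bound is $0$, so there is nothing to prove. If $A\neq C$, the bound is $1$, and it holds trivially because $V(A)\cup V(C)$ is nonempty, so its spanned subgraph has at least one component.

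Thus in all cases $\kappa(\tau(G))\geq \kappa(G)-1$. I will phrase this by noting that every vertex set of a component of $\tau(G)$ is either a component of $G$ not meeting $\{a,b,c,d\}$ or is contained in $V(A)\cup V(C)$.

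This observation already shows that $\tau$ can merge at most the two components $A$ and $C$ into one, and never more. The key point is that the four vertices involved lie in at most two components of $G$, namely those of the edges $ab$ and $cd$, so at most one merge can happen.

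Finally, \Cref{lemastable} gives stability. The paper then presumably deduces the interval property via \Cref{stable_implies_int_prop}.
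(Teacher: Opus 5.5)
Your proof is correct, but it proves the opposite one-sided inequality from the paper's.

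The paper shows $\kappa(\tau(G))\le\kappa(G)+1$ by working forward through $G-\{ab,cd\}$. Deleting the two edges raises $\kappa$ by at most $2$. In the extreme case $\kappa(G-\{ab,cd\})=\kappa(G)+2$, a short case analysis on whether $a$ and $c$ share a component shows that one of $ac$, $bd$ must join two distinct components.

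You bound merging instead. Since $ab\in E(A)$, $cd\in E(C)$, and $ac,bd$ have both endpoints in $V(A)\cup V(C)$, every other component of $G$ survives in $\tau(G)$, and $V(A)\cup V(C)$ is a union of components of $\tau(G)$. Hence $\kappa(\tau(G))=\kappa(G)-|\{A,C\}|+\kappa\bigl(\tau(G)[V(A)\cup V(C)]\bigr)\ge\kappa(G)-1$. Both routes then close with \Cref{lemastable}.

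Your route is shorter and case-free. Applying your observation to $\tau^{-1}$ on $\tau(G)$ yields the paper's inequality directly, so the paper's analysis of $G-\{ab,cd\}$ is avoidable. What the paper's version offers instead is a direct description of how a component can split, rather than how two can merge.

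Two presentation points. The opening ``deletion step/addition step'' discussion with the crude bound $\kappa(G)-2$ is never used and can be dropped. The one-line justification that no edge of $\tau(G)$ leaves $V(A)\cup V(C)$ should come before the reduction it supports, not at the end: the removed edges lie in $A$ or $C$, and the added edges have endpoints among $a,b,c,d$.
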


\begin{proof}
Let \(G\) be a graph, \(\tau={{a \ b}\choose{c \ d}}\) a 2-switch on \(G\) and $G'=G-\{ab,cd\}$. Clearly, $\kappa(G')\leq \kappa(G)+2$, as deleting an edge increases the number of components by at most 1. If $\kappa(G')\leq \kappa(G)+1$, then
\[ \kappa(\tau(G))=\kappa(G'+\{ac,bd\})\leq \kappa(G)+1, \]
as adding edges cannot increase the number of components. Assume $\kappa(G')=\kappa(G)+2$. If $a$ and $c$ are in different components of $G'$, then
\[ \kappa(G'+ac)=\kappa(G')-1\leq \kappa(G)+1, \]
which implies $\kappa(\tau(G))\leq \kappa(G)+1$. Suppose $a$ and $c$ are in the same component $H$ of $G'$. Then, neither $b$ nor $d$ are in $H$, because $\kappa(G')=\kappa(G)+2$. Furthermore, $b$ and $d$ must be in different components, since otherwise $\kappa(G)=\kappa(G'+\{ab,cd\})=\kappa(G')-1$.
Hence,
\[ \kappa(G'+bd)=\kappa(G')-1\leq \kappa(G)+1, \]
implying that $\kappa(\tau(G))\leq \kappa(G)+1$. Therefore, $\kappa$ is stable by \Cref{lemastable}.
\end{proof}

\begin{theorem}
	The number of connected components has the interval property.
\end{theorem}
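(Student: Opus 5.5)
This follows the same pattern the paper uses for the matching number, the independence number and the domination number: we combine the stability of $\kappa$ with the general principle that stability implies the interval property. \Cref{kappa_stable} shows that $\kappa$ is stable, meaning $|\kappa(\tau(G))-\kappa(G)|\leq 1$ for every graph $G$ and every 2-switch $\tau$ on $G$. In other words, $\kappa$ is stable in $\mathcal{G}(d)$ for every graphical sequence $d$.

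Next we apply \Cref{stable_implies_int_prop}. The relevant observation is that stability in $\mathcal{G}(d)$ passes to every connected induced subgraph $\mathcal{X}$ of $\mathcal{G}(d)$. This is immediate, because the stability condition in $\mathcal{X}$ only concerns 2-switches with $\tau(G)\in V(\mathcal{X})$, and those form a subset of all 2-switches. Then \Cref{printparamgeneral} gives the interval property in each such $\mathcal{X}$. Concretely, $\mathcal{X}$ may be $\mathcal{G}(d)$ itself (connected by \Cref{BergeTeo}), $\mathcal{F}(d)$ (\Cref{IPTT}), $\mathcal{P}(d)$ (\Cref{pseudoTransition}), or $\mathcal{K}(d)$, $\mathcal{U}(d)$ (\Cref{Teo:Tay}). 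In each case, take a graph minimizing $\kappa$ and a graph maximizing $\kappa$, join them by a 2-switch path inside $\mathcal{X}$, and note that $\kappa$ changes by at most $1$ at each step. Hence every intermediate integer value is attained along the path.

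There is no real obstacle here, since all the work was done in \Cref{kappa_stable}. The only point to state explicitly is that the theorem is meant in the paper's sense of ``has the interval property'', i.e.\ in every connected induced subgraph of $\mathcal{G}(d)$, which is exactly what \Cref{stable_implies_int_prop} delivers. In some of these families the property is trivial: $\kappa$ is constant on $\mathcal{F}(d)$, on $\mathcal{K}(d)$ and on $\mathcal{U}(d)$, so there the statement is consistent with the general argument but adds nothing.
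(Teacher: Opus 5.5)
Your proposal is correct and matches the paper's proof, which simply combines the stability of $\kappa$ (\Cref{kappa_stable}) with \Cref{stable_implies_int_prop}. Your additional remarks are accurate unpackings of those results: stability passes to connected induced subgraphs, a min-to-max 2-switch path hits every intermediate value, and $\kappa$ is constant on $\mathcal{F}(d)$, $\mathcal{K}(d)$ and $\mathcal{U}(d)$.
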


\begin{proof}
	It follows from \Cref{kappa_stable} and \Cref{stable_implies_int_prop}.
\end{proof}

\subsection{Path-covering number and related parameters}\label{sectionpathcover}

Let $G$ be a graph. Two paths in $G$ that do not share vertices are said to be vertex-disjoint. A \emph{path covering} of $G$ is a set of vertex-disjoint paths of $G$ containing all the vertices of $G$. The \emph{path-covering number} of $G$, denoted by $\pi(G)$, is the minimum number of paths in a path-covering of $G$. A minimum path-covering in $G$ is a path-covering in $G$ of size $\pi(G)$. We can look at a path-covering $\Phi$ of $G$ as a spanning forest of $G$, whose components are just the paths of $\Phi$.

\begin{theorem}\label{Teo_Stability_PathCovering}
The path-covering number is stable.
\end{theorem}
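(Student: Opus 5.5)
By \Cref{lemastable}, it suffices to prove the one-sided inequality
\[ \pi(\tau(G))\leq \pi(G)+1 \]
for every graph $G$ and every 2-switch $\tau={{a \ b}\choose{c \ d}}$ on $G$.

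Fix a minimum path-covering $\Phi$ of $G$ and regard it as a spanning linear forest: every component is a path, and it has exactly $\pi(G)$ components. Since $\Phi$ is a subgraph of $G$, at most two of its edges are removed by $\tau$, namely $ab$ and $cd$. The first step is the easy case. If neither $ab$ nor $cd$ lies in $E(\Phi)$, then $\Phi$ is still a spanning linear forest of $\tau(G)$, so $\pi(\tau(G))\leq\pi(G)$.

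If exactly one of them lies in $\Phi$, say $ab$, then $\Phi-ab$ is a spanning linear forest of $\tau(G)$ with $\pi(G)+1$ components. This is because deleting an edge from a path splits it into two paths. Hence $\pi(\tau(G))\leq\pi(G)+1$.

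The main case is when both $ab,cd\in E(\Phi)$. Deleting both gives a linear forest $\Phi'=\Phi-\{ab,cd\}$ with $\pi(G)+2$ paths, and we must reconnect using one of the new edges $ac$ or $bd$. In $\Phi'$, each of $a,b,c,d$ is an endpoint of its path, since it lost an incident path edge. Adding an edge joining endpoints of two \emph{distinct} paths of $\Phi'$ yields a linear forest with one fewer component. So the goal is to show that either $a$ and $c$, or $b$ and $d$, lie in different paths of $\Phi'$.

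I would split by whether $ab$ and $cd$ lie on the same path of $\Phi$.

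\emph{Different paths.} Then $a$ and $c$ come from different paths of $\Phi$, so they lie in different components of $\Phi'$, and $\Phi'+ac$ works.

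\emph{Same path $P$.} Then $P$ splits into three subpaths $P_1,P_2,P_3$ in order along $P$. The two deleted edges separate them, so the middle subpath $P_2$ contains exactly one endpoint of $ab$ and one endpoint of $cd$. Checking the possible orderings ($\ldots a\,b\ldots c\,d\ldots$, $\ldots a\,b\ldots d\,c\ldots$, and the variants with $a,b$ swapped), in each case at least one of the pairs $\{a,c\}$ or $\{b,d\}$ has its two vertices in different subpaths. For instance, if $P_2$ contains $b$ and $c$, then $a\in P_1$ and $d\in P_3$, so we add $bd$. Adding that edge merges two paths at their endpoints into a single path, giving $\pi(G)+1$ paths.

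The main obstacle is this same-path case. One must check that in every ordering at least one of the two added edges joins distinct components of $\Phi'$; otherwise both new edges would close a cycle and we would be left with $\pi(G)+2$ paths. This holds because $ac$ and $bd$ cannot both lie within single subpaths: that would force $a,c$ to be the two ends of one subpath and $b,d$ to be the two ends of another, which is impossible with only one middle subpath. In every case $\pi(\tau(G))\leq\pi(G)+1$, and \Cref{lemastable} concludes that $\pi$ is stable.
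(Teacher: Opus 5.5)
Your proof is correct and follows the paper's argument. Both fix a minimum path-covering $\Phi$, split according to how many of $ab,cd$ lie in $\Phi$, and reduce via \Cref{lemastable} to $\pi(\tau(G))\leq\pi(G)+1$. The only difference is in the case $ab,cd\in E(\Phi)$. There the paper applies $\tau$ to $\Phi$ and, if a cycle appears (which forces $a,b,c,d$ onto one path), deletes an edge of that cycle. You instead delete both edges and re-add a single new edge joining two distinct subpaths. Your repair is equivalent and slightly more explicit, but it gives up the paper's sharper bound $\pi(\tau(G))\leq\pi(G)$ when $\tau(\Phi)$ stays a forest, which is not needed here.
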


\begin{proof}
	Let \(G\) be a graph and $\tau={{a \ b}\choose{c \ d}}$ a 2-switch on \(G\). Suppose that $\Phi$ is a minimum path-covering in $G$. Then, $\kappa(\Phi)=\pi(G)$. There are three cases:
	\begin{enumerate}
		\item \(ab\) and \(cd\) are both edges of the forest $\Phi$.
		\item \(ab\) and \(cd\) are not edges of the forest $\Phi$.
		\item exactly one of \(ab\) and \(cd\) is an edge of the forest $\Phi$.
	\end{enumerate}
	\begin{enumerate}[label=(\arabic*).]
		\item If \(\tau(\Phi)\) is a forest, then \(\tau(\Phi)\) is a path-covering of \(\tau(G)\). Hence, \(\pi(\tau(G))\leq \pi(G)\). If \(\tau(\Phi)\) is not a forest, then the vertices \(a,b,c,\) and \(d\) are all in the same path \(P\) of \(\Phi\). This path breaks into a cycle \(C\) and a path after applying \(\tau\). If \(e\) is an edge of \(C\), then \(\tau(\Phi)-e\) is a path-covering of \(\tau(G)\). Hence, \(\pi(\tau(G))\leq \pi(G)+1\).

		\item As \(\tau\) has no effect on \(\Phi\), this is still a path-covering in \(\tau(G)\). Hence, \(\pi(\tau(G))\leq \pi(G)\).

		\item Assume that \(ab \in E(\Phi)\) and \(cd \notin E(\Phi)\). Note that \(\tau(\Phi)-ab\) is a path-covering of \(\tau(G)\). Hence, \(\pi(\tau(G))\leq \pi(G)+1\).
	\end{enumerate}

	Therefore, \(\pi(\tau(G))\leq \pi(G)+1\), and so $\pi$ is stable by \Cref{lemastable}.
\end{proof}

Let $G$ be a graph. A subset $S$ of initially infected vertices of $G$ is called a forcing set if we can infect the entire graph by iteratively applying the following process. At each step, any infected vertex that has a unique uninfected neighbor infects this neighbor. The \emph{zero-forcing number} of $G$, which is denoted by \(Z(G)\), is the minimum size of a forcing set in $G$ (see~\cite{HogbenLinShader2022} for a comprehensive treatment). It is well-known that for all graphs \(G\), \(\pi(G) \leq Z(G)\), and that for trees the path-covering number agrees with the zero-forcing number (see \cite{work2008zero}, Proposition 4.2). Therefore, we have the following.

\begin{corollary}
	\label{zero_forc_stable}
	The zero-forcing number is stable in $\mathcal{F}(d)$.
\end{corollary}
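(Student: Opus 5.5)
The plan is to transfer stability from the path-covering number to the zero-forcing number by working inside the class of forests. I will use \Cref{Teo_Stability_PathCovering} together with the cited fact that $Z$ and $\pi$ agree on trees. Stability in $\mathcal{F}(d)$ only concerns pairs $F,\tau(F)$ with both graphs forests, that is, $\tau$ an f-switch on $F\in V(\mathcal{F}(d))$. For such a pair, \Cref{Teo_Stability_PathCovering} already gives $|\pi(\tau(F))-\pi(F)|\le 1$. The whole task is therefore to show that $Z(F')=\pi(F')$ for every forest $F'$, and not only for every tree.

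The key step is extending the identity $Z(T)=\pi(T)$ from trees to forests, and I expect this to be the only point needing any care. Both parameters are additive over connected components. For $\pi$ this holds because a path of a path-covering lies inside a single component. So a minimum path-covering of $F'$ restricts to path-coverings of the components, and conversely coverings of the components combine into one of $F'$.

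For $Z$, the forcing rule at a vertex only involves its neighbours, which lie in its own component. Hence a set $S$ is forcing in $F'$ if and only if $S\cap V(T)$ is forcing in $T$ for every component $T$. This gives $Z(F')=\sum_T Z(T)$. Applying the tree identity to each component then yields
\[
Z(F')=\sum_T Z(T)=\sum_T \pi(T)=\pi(F').
\]
Isolated vertices are trivial components: each contributes $1$ to both sides, being a one-vertex path that must itself be infected initially.

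To conclude, take $F\in V(\mathcal{F}(d))$ and an f-switch $\tau$. Then
\[
|Z(\tau(F))-Z(F)|=|\pi(\tau(F))-\pi(F)|\le 1,
\]
using the identity above for both forests $F$ and $\tau(F)$, followed by \Cref{Teo_Stability_PathCovering}. By definition this is stability of $Z$ in $\mathcal{F}(d)$, which is a connected induced subgraph by \Cref{IPTT}.
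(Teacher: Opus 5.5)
Your proof is correct and follows the same route as the paper: you identify $Z$ with $\pi$ on forests via the cited tree identity, then invoke \Cref{Teo_Stability_PathCovering} for f-switches. The only difference is that you explicitly justify the passage from trees to forests by the additivity of both $Z$ and $\pi$ over components, a step the paper leaves implicit in its ``previous discussion.''
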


\begin{proof}
	It follows from the previous discussion and \Cref{Teo_Stability_PathCovering}.
\end{proof}

Let \(G\) be a graph without isolated vertices. A sequence \((v_{1},\dots,v_{k})\), where \(v_{i}\in V(G)\), is called a Z-sequence if, for each \(i\): $N_i - \bigcup_{j=1}^{i-1} \left(N_j\cup\{v_j\}\right) \neq \varnothing$, where $N_i$ is the (open) neighborhood of $v_i$ in $G$. The \emph{Z-Grundy domination number} \(\gamma_{gr}^{Z}(G)\) of the graph \(G\) is the length of a longest Z-sequence. In \cite{bresar2017grundy} the following theorem was proved.

\begin{theorem}[\cite{bresar2017grundy}, Theorem 2.2]
	\label{zgrundydom_teo_citado}
	If \(G\) is a graph without isolated vertices, then $\gamma_{gr}^{Z}(G)+Z(G)=|V(G)|$.
\end{theorem}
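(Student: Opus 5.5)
The statement just cited is Theorem 2.2 of \cite{bresar2017grundy}, which the paper imports rather than proves; to prove it I would establish a duality between Z-sequences and zero forcing sets. The concrete claim is that the complement of a Z-sequence is a forcing set, and that a forcing set, once its forcing process is reversed, yields a Z-sequence of complementary length. Each direction gives one inequality, so I plan to prove $Z(G)\le |V(G)|-\gamma_{gr}^{Z}(G)$ and $\gamma_{gr}^{Z}(G)\ge |V(G)|-Z(G)$ separately.

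\emph{First inequality.} Let $(v_1,\dots,v_k)$ be a Z-sequence. For each $i$ I choose a witness $u_i\in N_i-\bigcup_{j<i}(N_j\cup\{v_j\})$. The $u_i$ are pairwise distinct, since $u_i\in N_i$ and $u_{i'}\notin N_i$ for $i'>i$. I take $S=V(G)-\{u_1,\dots,u_k\}$ as the initial infected set, so $|S|=n-k$, and run the forcing process in the order $i=k,k-1,\dots,1$. Suppose $u_{k},\dots,u_{i+1}$ are already infected. Every neighbour of $v_i$ other than $u_i$ lies outside $\{u_{i'}:i'<i\}$, because $u_{i'}\notin N_i$ for $i'>i$ and for $i'<i$ the only possibility left would contradict the ordering. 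So $u_i$ is the unique uninfected neighbour of $v_i$.

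The delicate point in this direction is that $v_i$ must itself be infected at this moment. I would check that $v_i\notin\{u_1,\dots,u_{i}\}$: we have $v_i\ne u_i$ since $u_i\in N_i$, and $u_{i'}$ with $i'<i$... Actually the convenient condition is that $u_{i'}\ne v_i$ for $i'>i$, and that holds because $u_{i'}$ avoids $\{v_j:j<i'\}$. If I reverse the forcing order so that smaller indices force last, I need $v_i$ not to equal any $u_{i'}$ with $i'<i$. To avoid this issue I would order the forcing so the witness sets match, and I would choose the forcing direction only after testing it on a path. This is where I expect the index bookkeeping to be fiddly.

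\emph{Second inequality.} Take a minimum forcing set $S$ and a chronological list of forces $x_1\to y_1,\dots,x_m\to y_m$ with $m=n-|S|$. I then reverse the list and consider $(x_m,\dots,x_1)$. At the step corresponding to $x_t$, the vertex $y_t$ is a neighbour of $x_t$. It is not a neighbour of any later-listed $x_s$ ($s<t$), since when $x_s$ forced, $y_t$ was uninfected and $x_s$ had only one uninfected neighbour, namely $y_s\neq y_t$. It is also not equal to any such $x_s$, because $x_s$ was infected before $y_t$. Hence $y_t$ is a valid witness, and we get a Z-sequence of length $n-Z(G)$. The same freshness argument with the roles swapped gives the first inequality, so I would write both directions symmetrically using this reversal correspondence. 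The absence of isolated vertices is used only to make Z-sequences well defined.
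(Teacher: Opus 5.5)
The paper does not prove this statement; it imports it from \cite{bresar2017grundy}. Your witness/forcer correspondence is a sound idea, but both halves run it in the wrong direction, and both already fail on the path with vertices $a,b,c$ and edges $ab,bc$. Write $N(v)$ for the open and $N[v]=N(v)\cup\{v\}$ for the closed neighbourhood.

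In the first half, the Z-condition gives $u_{i'}\notin N[v_i]$ only for $i'>i$. It says nothing about $u_{i'}$ with $i'<i$, and $v_i$ may even equal such a $u_{i'}$. In your order $i=k,\dots,1$ the uninfected vertices at step $i$ are $u_1,\dots,u_i$. Neither the uniqueness of $u_i$ as an uninfected neighbour of $v_i$ nor the infection of $v_i$ is justified. Take the Z-sequence $(a,b)$ with witnesses $u_1=b$, $u_2=c$ and $S=\{a\}$: the first step needs $v_2=b$ to force $c$, but $b=u_1$ is still uninfected.

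In the second half you verify $y_t\notin N[x_s]$ for $s<t$. However, in the reversed list $(x_m,\dots,x_1)$ the entries \emph{preceding} $x_t$ are the $x_s$ with $s>t$, and the Z-condition is about those. Concretely, with $S=\{a\}$ and forces $a\to b$, $b\to c$, the list $(b,a)$ is not a Z-sequence, since $N(a)-N[b]=\varnothing$.

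The repair is to drop the reversal in both places. For the first inequality, force in increasing order $i=1,\dots,k$. Before step $i$ the uninfected vertices are exactly $u_i,\dots,u_k$. The vertex $v_i$ is infected because $v_i\neq u_i$ and $u_{i'}\notin N[v_i]$ for $i'>i$, and the same fact makes $u_i$ its unique uninfected neighbour. This gives $Z(G)\le |V(G)|-\gamma_{gr}^{Z}(G)$.

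For the second inequality, the chronological list $(x_1,\dots,x_m)$ is a Z-sequence with witnesses $y_t$. Your freshness argument is exactly the required check, since the entries preceding $x_t$ are now the $x_s$ with $s<t$.

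Reversal is appropriate only for the other natural correspondence. There, the complement of $\{v_1,\dots,v_k\}$ is a forcing set in which $u_i$ forces $v_i$ for $i=k,\dots,1$. Conversely, the reversed list of forced vertices $(y_m,\dots,y_1)$, with witnesses $x_t$, is a Z-sequence. Your write-up mixes these two correspondences, so as written neither direction goes through.
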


\begin{corollary}
	\label{zgrundy_stable}
	Let $d$ be a degree sequence of a forest without isolated vertices. Then, the Z-Grundy domination number is stable in $\mathcal{F}(d)$.
\end{corollary}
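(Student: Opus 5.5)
The plan is to reduce the statement to \Cref{zero_forc_stable} through the identity of \Cref{zgrundydom_teo_citado}. Since $d$ is the degree sequence of a forest without isolated vertices, every $F\in V(\mathcal{F}(d))$ has no isolated vertices: the degree sequence is the same for all members, and no entry of $d$ equals $0$. So the hypothesis of \Cref{zgrundydom_teo_citado} holds for every vertex of $\mathcal{F}(d)$, and we get
\[
\gamma_{gr}^{Z}(F)=|V(F)|-Z(F)\quad\text{for all } F\in V(\mathcal{F}(d)).
\]

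Next, every member of $V(\mathcal{F}(d))$ has the same order $n$, namely the length of $d$. So for any $F\in V(\mathcal{F}(d))$ and any f-switch $\tau$ on $F$ we obtain
\[
\left|\gamma_{gr}^{Z}(\tau(F))-\gamma_{gr}^{Z}(F)\right|=\left|Z(F)-Z(\tau(F))\right|.
\]
By \Cref{zero_forc_stable}, the right-hand side is at most $1$, because $Z$ is stable in $\mathcal{F}(d)$ and $\tau(F)\in V(\mathcal{F}(d))$. This is exactly stability of $\gamma_{gr}^{Z}$ in $\mathcal{F}(d)$. \Cref{IPTT} guarantees that $\mathcal{F}(d)$ is a connected induced subgraph, so the notion of stability applies there.

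There is no real obstacle. The only point needing care is confirming that the hypothesis ``without isolated vertices'' is preserved along $\mathcal{F}(d)$, and this is immediate because 2-switches preserve degrees. It is also worth noting why the statement is restricted to $\mathcal{F}(d)$ rather than $\mathcal{G}(d)$: the stability of $Z$ used here rests on the equality $Z=\pi$ for forests together with \Cref{Teo_Stability_PathCovering}, and that equality holds only on forests.
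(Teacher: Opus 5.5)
Your argument is correct and is exactly the paper's proof, which combines the identity $\gamma_{gr}^{Z}(G)+Z(G)=|V(G)|$ of \Cref{zgrundydom_teo_citado} with the stability of $Z$ in $\mathcal{F}(d)$ from \Cref{zero_forc_stable}. Your checks that every member of $V(\mathcal{F}(d))$ has no isolated vertices and the same order are the implicit details that make this work; one small quibble is that your closing aside overstates things, since $Z=\pi$ is only \emph{guaranteed} on forests rather than holding \emph{only} on forests (a triangle with two pendant leaves at each vertex has $Z=\pi=3$), though this does not affect the proof.
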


\begin{proof}
	It follows from \Cref{zgrundydom_teo_citado} and \Cref{zero_forc_stable}.
\end{proof}

\begin{theorem}
	The path-covering number has the interval property. The zero-forcing number and the Z-Grundy domination number have the interval property in $\mathcal{F}(d)$.
\end{theorem}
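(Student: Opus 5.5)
The statement follows by combining the stability results of this subsection with the general principle that stability implies the interval property. There are three claims, and I will handle them in order.

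\emph{Path-covering number.} \Cref{Teo_Stability_PathCovering} shows that $\pi$ is stable, meaning stable in all of $\mathcal{G}(d)$. \Cref{stable_implies_int_prop} then gives the interval property in every connected induced subgraph of $\mathcal{G}(d)$. In particular this covers $\mathcal{G}(d)$ itself (connected by \Cref{BergeTeo}), $\mathcal{F}(d)$ (\Cref{IPTT}) and $\mathcal{P}(d)$ (\Cref{pseudoTransition}).

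\emph{Zero-forcing number.} By \Cref{zero_forc_stable}, $Z$ is stable in $\mathcal{F}(d)$. Since $\mathcal{F}(d)$ is connected by \Cref{IPTT}, \Cref{printparamgeneral} applies directly with $\mathcal{X}=\mathcal{F}(d)$. A cleaner way to see the same thing is to note that $Z=\pi$ on every forest, so $Z$ and $\pi$ agree as functions on $V(\mathcal{F}(d))$, and the interval property is inherited from the first claim.

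\emph{Z-Grundy domination number.} Here I need a little care, because \Cref{zgrundydom_teo_citado} requires graphs without isolated vertices. Any isolated vertices are fixed by every 2-switch and appear in every forest of $V(\mathcal{F}(d))$, so I will treat them separately.
\begin{itemize}
\item If $d$ has no zeros, \Cref{zgrundy_stable} gives stability in $\mathcal{F}(d)$, and \Cref{printparamgeneral} together with \Cref{IPTT} finishes the argument.
\item If $d$ has zeros, then either $\gamma_{gr}^{Z}$ is defined only for graphs without isolated vertices, in which case the statement implicitly assumes $d$ has no zeros, or one extends the definition. Under the extension, an isolated vertex $v$ has $N_v=\varnothing$, so it can never appear in a Z-sequence. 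Hence $\gamma_{gr}^{Z}(F)=\gamma_{gr}^{Z}(F-I)$, where $I$ is the set of isolated vertices. Because every forest in $V(\mathcal{F}(d))$ has exactly the same isolated vertices, this reduces the claim to the previous case.
\end{itemize}

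The main obstacle is this boundary case of isolated vertices in the Z-Grundy claim; every other step is a direct citation of the stability result followed by \Cref{printparamgeneral}.
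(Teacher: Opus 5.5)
Your proof is correct and is essentially the paper's argument: you feed the stability of $\pi$ in $\mathcal{G}(d)$ and of $Z$ and $\gamma_{gr}^{Z}$ in $\mathcal{F}(d)$ (\Cref{Teo_Stability_PathCovering,zero_forc_stable,zgrundy_stable}) into \Cref{printparamgeneral}, using the connectedness of $\mathcal{F}(d)$ from \Cref{IPTT}. Your reduction for forests with isolated vertices in the Z-Grundy case is a correct addition, covering a case the paper sidesteps by assuming no isolated vertices in \Cref{zgrundy_stable}.
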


\begin{proof}
	It follows from \Cref{Teo_Stability_PathCovering}, and \Cref{zero_forc_stable,zgrundy_stable,stable_implies_int_prop}.
\end{proof}

\subsection{Chromatic number}\label{sectionchromatic}

A coloring of $G$ is a function $f:V(G)\rightarrow \mathbb{N}$ such that $f(x)\neq f(y)$, for every $xy\in E(G)$. In this context, $f(V(G))$ is called the set of colors of $f$. If $|f(V(G))|=k$, we say that $f$ is a $k$-coloring of $G$. The \emph{chromatic number} of a graph $G$, denoted by $\chi(G)$, is the smallest value of $k$ for which there is a $k$-coloring of $G$.

\begin{theorem}
	\label{chrom_stable}
The chromatic number is stable.
\end{theorem}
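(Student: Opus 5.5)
We follow the pattern of the domination and path-covering proofs: by \Cref{lemastable} it suffices to show the one-sided inequality $\chi(\tau(G))\leq \chi(G)+1$ for every graph $G$ and every 2-switch $\tau={{a \ b}\choose{c \ d}}$ on $G$. The strategy is to start from an optimal coloring of $G$ and repair it on $\tau(G)$ by spending at most one new color.

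Let $f$ be a $\chi(G)$-coloring of $G$. The only edges of $\tau(G)$ that are not edges of $G$ are $ac$ and $bd$. Therefore $f$ is a proper coloring of $\tau(G)$ unless $f(a)=f(c)$ or $f(b)=f(d)$.

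We then split into cases according to which of these conflicts occur.

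\begin{enumerate}
\item If neither conflict occurs, then $f$ is already proper on $\tau(G)$, and $\chi(\tau(G))\leq\chi(G)$.
\item If exactly one conflict occurs, say $f(a)=f(c)$, we recolor $a$ with a brand-new color $k+1$, where $k=\chi(G)$. This gives a proper $(k+1)$-coloring of $\tau(G)$, since no other vertex uses that color.
\item If both conflicts occur, the main point is to resolve both with a single new color.
\end{enumerate}

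For the third case, suppose $f(a)=f(c)$ and $f(b)=f(d)$. We recolor both $a$ and $d$ with the new color $k+1$. This is valid only if $a$ and $d$ are not adjacent in $\tau(G)$, so we must check that $ad\notin E(\tau(G))$. Since $ab\in E(G)$ and $f$ is proper, $f(a)\neq f(b)$, which gives $f(a)\neq f(d)$. This alone does not rule out the edge $ad$.

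If $ad\notin E(G)$, then $ad\notin E(\tau(G))$ as well, because $\tau$ only adds $ac$ and $bd$. In that case giving $a$ and $d$ the color $k+1$ is proper: both $ac$ and $bd$ are now fixed, and $a$, $d$ share no edge.

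If instead $ad\in E(G)$, we recolor $a$ and $b$ instead, provided $ab$ is absent from $\tau(G)$. Indeed $ab$ is deleted by $\tau$, so $ab\notin E(\tau(G))$. Giving $a$ and $b$ the new color $k+1$ resolves $ac$, since $f(c)\neq k+1$, and resolves $bd$, since $f(d)\neq k+1$. Moreover $a$ and $b$ are not adjacent in $\tau(G)$, so the coloring stays proper there.

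The pair $\{a,b\}$ actually works in every subcase, which simplifies the argument: recolor both $a$ and $b$ with color $k+1$ whenever any conflict occurs. We should also verify that neither of the other new edges is monochromatic. The edge $ac$ has $c$ colored $f(c)\leq k$, and $bd$ has $d$ colored $f(d)\leq k$. Every remaining edge at $a$ or $b$ goes to a vertex with an old color, and $ab$ itself is gone.

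Hence $\chi(\tau(G))\leq\chi(G)+1$ in all cases, and \Cref{lemastable} yields stability. The only real obstacle is noticing that the deleted edge $ab$ lets its two endpoints share the new color. Once that is seen, the remaining verification is routine.
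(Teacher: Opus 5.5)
Your proof is correct and is essentially the paper's argument: the paper also takes an optimal coloring and gives a single new color $k+1$ to both endpoints of an edge deleted by $\tau$, except that it recolors $c$ and $d$ (using $cd\notin E(\tau(G))$) where you recolor $a$ and $b$, which is symmetric. The exploratory case analysis involving $ad$ can be dropped, since your final uniform recoloring of $\{a,b\}$ already handles every case.
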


\begin{proof}
	 Let $\tau={{a \ b}\choose{c \ d}}$ be a 2-switch on a graph $G$ which has a $k$-coloring $f$, with $k=\chi(G)$.
	 Notice that if $f$ is not a coloring of $\tau(G)$, then either $f(a)=f(c)$ or $f(b)=f(d)$. Assume without loss of generality that $f(V(G))=\{1,\ldots, k\}$. Define $\varphi:V(G)\rightarrow \mathbb{N}$ as
	 \[
\varphi(v)=\left\lbrace\begin{array}{ll}
f(v),\quad&\text{if }v\not\in\{c,d\},\\
k+1,\quad&\text{otherwise.}
\end{array}\right.
\]
Then $\varphi(a)\neq \varphi(c)$ and $\varphi(b)\neq \varphi(d)$, which implies that $\varphi$ is a coloring of $\tau(G)$.
Furthermore, $|\varphi(V(G))|=k+1$.
Therefore, $\chi(\tau(G)) \leq \chi(G)+1$, and so $\chi$ is stable by \Cref{lemastable}.
\end{proof}

\begin{theorem}
	The chromatic number has the interval property.
\end{theorem}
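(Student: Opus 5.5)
This follows the same pattern as the earlier interval-property theorems, so we simply combine the stability result just established with the general principle. By \Cref{chrom_stable}, the chromatic number $\chi$ is stable in $\mathcal{G}(d)$ for every graphical sequence $d$: for every graph $G$ and every 2-switch $\tau$ on $G$ we have $|\chi(\tau(G))-\chi(G)|\leq 1$. The proof of \Cref{chrom_stable} obtains one inequality directly and the reverse via \Cref{lemastable}, by applying the same bound to the inverse 2-switch.

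Next, stability in $\mathcal{G}(d)$ passes to every connected induced subgraph $\mathcal{X}$ of $\mathcal{G}(d)$. This holds because the defining inequality only concerns 2-switches whose result stays inside $\mathcal{X}$, and these are a subset of all 2-switches. Examples of such $\mathcal{X}$ are $\mathcal{G}(d)$ itself, $\mathcal{F}(d)$, $\mathcal{P}(d)$, $\mathcal{U}(d)$ and $\mathcal{K}(d)$, which are connected by \Cref{BergeTeo,IPTT,pseudoTransition,Teo:Tay}.

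Finally, \Cref{printparamgeneral}, or directly \Cref{stable_implies_int_prop}, yields the interval property. Take a realization attaining $\chi_{\min}$ and one attaining $\chi_{\max}$, and join them by a path in $\mathcal{X}$. Along this path $\chi$ changes by at most $1$ at each step, so it attains every integer in $[\chi_{\min},\chi_{\max}]$.

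No step here is a real obstacle, since all the work is already done in \Cref{chrom_stable}. The only point needing care is the $(k+1)$-coloring $\varphi$ used there: it recolors both $c$ and $d$ with the new color $k+1$. This is legitimate because $cd$ is not an edge of $\tau(G)$, so giving $c$ and $d$ the same color creates no conflict. With that checked, the theorem follows immediately from \Cref{stable_implies_int_prop}.
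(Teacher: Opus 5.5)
Your proposal is correct and matches the paper's proof, which likewise combines the stability of $\chi$ (\Cref{chrom_stable}) with \Cref{stable_implies_int_prop}. Your side check is also right: giving both $c$ and $d$ the new color $k+1$ is safe because $cd\notin E(\tau(G))$.
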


\begin{proof}
	It follows from \Cref{chrom_stable} and \Cref{stable_implies_int_prop}.
\end{proof}

\section*{Acknowledgements}
This work was partially supported by Universidad Nacional de San Luis, grants PROICO 03-0723 and PROIPRO 03-2923, MATH AmSud, grant 22-MATH-02, Consejo Nacional de Investigaciones
Cient\'ificas y T\'ecnicas grant, PIP 11220220100068CO and Agencia I+D+I grants PICT 2020-00549 and PICT 2020-04064.

We would like to thank the anonymous referee for their suggestions that greatly improved the presentation of this article. We also thank P\'eter L. Erd\H{o}s (A.\ R\'enyi Institute of Mathematics, Budapest) for bringing to our attention the early history of the switch operation and several relevant references.

\begin{Contacts}
\AuthorAddress%
  {Victor N. Schv\"ollner}
  {Instituto de Matem\'atica Aplicada San Luis, UNSL--CONICET, San Luis, Argentina}
  {vnsi9m6@gmail.com}
\AuthorAddress%
  {Adri\'an Pastine}
  {Instituto de Matem\'atica Aplicada San Luis, UNSL--CONICET, San Luis, Argentina}
  {agpastine@unsl.edu.ar}
\AuthorAddress%
  {Daniel A. Jaume}
  {Instituto de Matem\'atica Aplicada San Luis, UNSL--CONICET, San Luis, Argentina}
  {djaume@unsl.edu.ar}
\end{Contacts}

\end{document}